\newcommand{\orcidlogo}{\includegraphics[height=10pt]{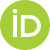}}
\newcommand{\orcid}[1]{\href{https://orcid.org/#1}{\orcidlogo}}
\theoremstyle{plain}
\newtheorem{theorem}{Theorem}[section]
\newtheorem{lemma}[theorem]{Lemma}
\newtheorem{corollary}[theorem]{Corollary}
\newtheorem{remark}[theorem]{Remark}
\newcommand{\R}{\mathbb{R}}
\newcommand{\M}{\mathcal{M}}
\newcommand{\N}{\mathbb{N}}
\newcommand{\C}{\mathcal{C}}
\newcommand{\W}{\mathcal{W}}
\renewcommand{\S}{\mathcal{S}}
\newcommand{\IF}[1]{\llbracket{#1}\rrbracket}
\newcommand{\Mat}[1]{\bm{#1}}
\newcommand{\eb}{{\bm{e}}}
\newcommand{\gb}{{\bm{g}}}
\newcommand{\nb}{{\bm{n}}}
\newcommand{\vb}{{\bm{v}}}
\newcommand{\wb}{{\bm{w}}}
\newcommand{\xb}{{\bm{x}}}
\newcommand{\yb}{{\bm{y}}}
\newcommand{\tangent}{\bm{\tau}}
\newcommand{\conormal}{\bm{\nu}}
\newcommand{\tconormal}{\widetilde{\conormal}}
\newcommand{\Pt}{\mathcal{P}}
\newcommand{\tPt}{\widetilde{\Pt}}
\newcommand{\norm}[1]{\lVert{#1}\rVert}
\newcommand{\Norm}[1]{\left\lVert{#1}\right\rVert}
\newcommand{\PP}{P_{\nb}} 
\newcommand{\PPh}{P_{\nb_h}} 
\newcommand{\ds}{\,\mathrm{d}s}
\newcommand{\dtau}{\,\mathrm{d}\tau}
\newcommand{\dA}{\,\mathrm{d}A}
\DeclareMathOperator{\arctantwo}{arctan2}
\newenvironment{subsubfigure}[2][]{%
  \begin{subfigure}[#1]{#2}%
    \stepcounter{subsubfigure}%
}{%
    \addtocounter{subfigure}{-1}%
  \end{subfigure}%
}
\newcounter{subsubfigure}
\numberwithin{equation}{section}
\begin{document}
\title{Surface Minkowski tensors to characterize shapes on curved surfaces}

\author{Lea Happel, Hanne Hardering, Simon Praetorius\thanks{Corresponding author: \href{mailto:simon.praetorius@tu-dresden.de}{simon.praetorius@tu-dresden.de}}, and Axel Voigt}

\maketitle

\begin{abstract}
    We introduce surface Minkowski tensors to characterize rotational symmetries of shapes embedded in curved surfaces. The definition is based on a modified vector transport of the shapes boundary co-normal into a reference point which accounts for the angular defect that a classical parallel transport would introduce. This modified transport can be easily implemented for general surfaces and differently defined embedded shapes, and the associated irreducible surface Minkowski tensors give rise to the classification of shapes by their normalized eigenvalues, which are introduced as shape measures following the flat-space analog. We analyze different approximations of the embedded shapes, their influence on the surface Minkowski tensors, and the stability to perturbations of the shape and the surface. The work concludes with a series of numerical experiments showing the applicability of the approach on various surfaces and shape representations and an application in biology in which the characterization of cells in a curved monolayer of cells is considered.

  \textbf{Keywords.}\space{curved surfaces, cellular shape, rotational symmetries, Minkowski tensors}
\end{abstract}

\section{Introduction}
Minkowski tensors (MT) are generalizations of Minkowski functionals (MF) and are powerful tools to characterize the shape of spatial structures with respect to rotational symmetries. They are based on a solid mathematical foundation, provided by integral and stochastic geometry, and are endowed with robustness and completeness theorems. This is at least true in flat space where these measures have been intensively used in various disciplines~\cite{SchroderTurk2011Minkowski}. More recently, the concept was extended to characterize embedded shapes in spherical surfaces~\cite{Chingangbam2017Tensor,Collischon2024Morphometry,Duque2024Geometric}. The main application in these studies comes from cosmology.

Our interest in characterizing rotational symmetries of embedded shapes in curved surfaces results from biology. Here the objects of interest are individual cells in a cell monolayer which defines the curved surface. The shapes of cells and their alignments with neighboring cells can determine orientational order in these layers which influences their mechanical properties~\cite{nitschke2025active}. Nematic order, characterized by rotational symmetry under $180^\circ$, has been widely studied in cell monolayers~\cite{duclos2017topological,saw2017topological,kawaguchi2017topological} and has been linked to cellular behaviors and tissue organization. Also more complex orders, such as tetratic order, characterized by rotational symmetry under $90^\circ$~\cite{Cislo_NaturePhysics_2023} and hexatic order, characterized by rotational symmetry under $60^\circ$~\cite{li2018role,armengol2023epithelia}, and even general $p$-atic orders, characterized by rotational symmetry under $2 \pi / p$ with $p$ being an integer~\cite{happel2025quantifying} have been observed in experimental systems. This offers new perspectives on how cells self-organize, respond to mechanical cues and trigger morphological changes in tissues. However, to identify these $p$-atic orders and use them to model morphogenetic processes first requires a robust and versatile tool to characterize the shape of cells on curved surfaces. In \Cref{fig:exp} two examples for curved cell monolayers in morphogenetic processes with cells as embedded shapes are shown. The cell boundaries are marked in gray and a strong variability in shape with respect to rotational symmetry of the cells and their orientation can be seen.

\begin{figure}
\centering
    \includegraphics[width=0.669\textwidth]{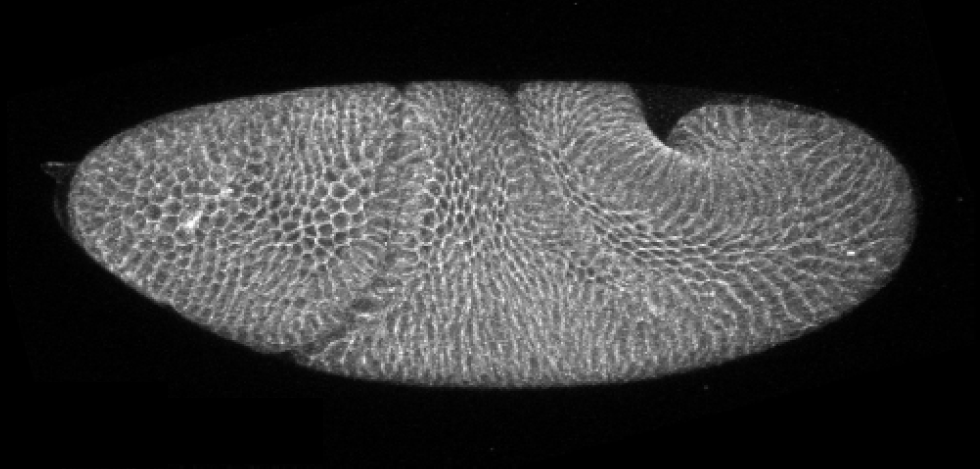}
    \includegraphics[width=0.319\textwidth]{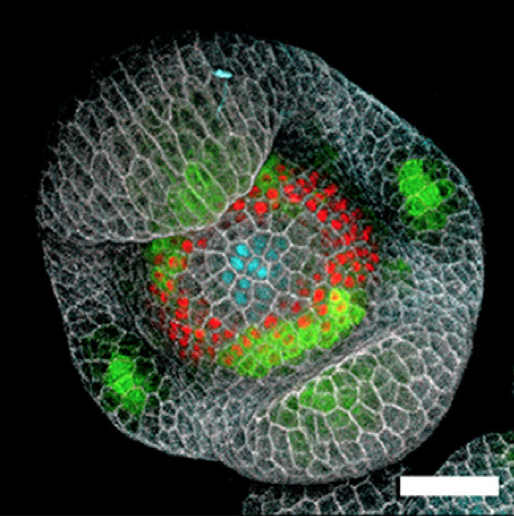}
    \caption{\label{fig:exp} Examples of curved cell monolayers in developmental biology.\ (left) Lateral view of wildtype Drosophila germband extension, with permission from~\cite{munster2019attachment} (Figure 4).\ (right) Confocal image of Arabidopsis flower bud, with permission from~\cite{PRUNET2016114} (Figure 1). In both images cell boundaries are shown in gray.}
\end{figure}

The work of~\cite{Chingangbam2017Tensor} provides a first step in the direction to characterize such shapes. It introduces the notion of surface MF and surface MT by translating the definitions in flat space to a curved manifold. Their definition is then applied to spherical geometries only, since intrinsic geometric knowledge about the surface, i.e., geodesics and parallel transport, needs to be employed. We will revise these definitions and propose a different definition for surface MT tensors that is verified on regular surfaces and proven to give the expected behavior on more general surfaces. Similar to the work of~\cite{Collischon2024Morphometry}, we focus on irreducible surface MT and their eigenvalues and eigenvectors as these provide a natural shape measure and anisotropy characterization.

We further develop and compare practical implementations on more general surfaces, including triangulated and piecewise parametrized surfaces. Also the embedded shape on the surface can be given differently, either as a parameterized curve or in an implicit description as a levelset of a function defined on the surface. We analyze the convergence properties of the approximations associated with these representations and numerically verify
these properties on various examples with increased complexity. This demonstrates robustness and a broad applicability.

The paper is structured as follows: In \Cref{s:2} we introduce the geometric setup and provide the definitions of surface MF and surface MT as well as irreducible surface MT.\@ To justify the definitions, we examine their properties in \Cref{s:3}. The surface MT can be efficiently evaluated numerically, using two types of approximations of the continuous definitions, by geodesic polygons and straight line polygons in the embedding, as introduced in \Cref{sec:approximation}.
All constructions and approximation results are validated by computations and some applications are provided in \Cref{sec:numerical-experiments}.

\section{Surface Minkowski functionals and surface Minkowski tensors}\label{s:2}

\subsection{Geometric setup}\label{sec:geometric-setup}
\begin{wrapfigure}{r}{0.25\textwidth}
\centering
\vspace{-2em}
    \includegraphics[width=\linewidth]{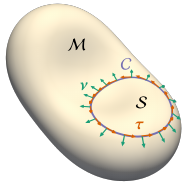}
    \caption{Illustration of the general naming of embedded shapes on surfaces.}\label{fig:2}
\vspace{-2em}
\end{wrapfigure}
Let $\M$ be a smooth two-dimensional orientable Riemannian manifold with bounded Gaussian curvature $K$.
Let $\S\subset\M$ denote a compact simply connected subdomain of $\M$ with $\int_\S K\dA < 2\pi$ and (piecewise) smooth boundary contour $\C=\partial \S$, see \Cref{fig:2} for an illustration.

We represent the curve $\C$ by a periodic arc-length parametrization $\gamma\colon[0,L]\to\M$, i.e., $\gamma(0)=\gamma(L)$, length of the curve $L=|\C|$, and constant speed $\norm{\dot{\gamma}}\equiv 1$. The tangent vector along the curve is denoted by $\tangent=\dot{\gamma}\in T_{\gamma}\M$ and the outer co-normal vector by $\conormal\in T_{\gamma}\M$.
The geodesic curvature $k_g$ along the curve $\C$ can be calculated by $k_g(s) = -\ddot{\gamma}(s)\cdot\conormal(s)$ for $s\in[0,L]$ the curve parameter.

If $\M$ is isometrically embedded in $\R^3$ we denote by $\nb\colon\M\to\R^3$ the outer normal vector field to $\M$ and assume that the tangent, co-normal and surface normal form a right-handed moving frame $(\conormal,\tangent,\nb)$.

\subsection{Surface Minkowski functionals}

Following the classical definition of integral moments of convex bodies in two dimensions, see, e.g.,~\cite{Mueller1953Ueber,Hadwiger1973Studien,SchroderTurk2009Tensorial}, we introduce surface MF as
\begin{align}
    \W_0 &\colonequals \int_{\S} 1\dA, &
    \W_1 &\colonequals \int_{\C} 1\ds, &
    \W_2 &\colonequals \int_{\C} k_g\ds,
\end{align}
that is, the area of the enclosed subdomain, curve length, and total curvature.
In flat space, $\W_2$ is equivalent to the Euler characteristic of $\S$. By Hadwiger's characterization scheme, any additive, continuous, and motion-invariant functional of $\S$ can be expressed as a linear combination of the MF.\@ Since we assume that $\S$ is simply connected, $\W_2$ would contain no additional information in flat space. However, this is different on curved surfaces. Here, the geodesic curvature is connected to the enclosed Gaussian curvature of the subdomain as described by the Gau\ss--Bonnet theorem.

\begin{remark}\label{rem:gauss-bonnet}
    Assume that the boundary curve $\C$ is smooth. Then, the classical Gau\ss--Bonnet theorem tells us that
    \[
      \int_{\S} K\dA + \int_{\C} k_g\ds = 2\pi\chi(\S)
    \]
    with $\chi(\S)$ the Euler characteristic of $\S$. We consider simply connected subdomains $\S\subset\M$ and thus $\chi(\S)=1$.
\end{remark}

Even in the flat space, the MF alone are not well suited to quantify characteristics of the shape of $\S$ that relate to directional anisotropy or orientation. This motivates the introduction of MT~\cite{SchroederTurk2013Minkowski}.

\subsection{Surface Minkowski tensors}

To make use of the directional information encoded in the co-normal field, MT (in $\R^2$) are defined as $W_1^p = \int_{\C} \conormal^{ \otimes p} \ds$~\cite{SchroederTurk2013Minkowski}, with $\conormal^{ \otimes p}= \conormal\otimes\ldots\otimes\conormal$ the $p$-times outer vector product. These tensors are translation invariant, and measure anisotropy with respect to different symmetry orders $p\in \N$.
In two dimensions, the anisotropy with respect to $p$ can be quantified by a scalar value $\mu_p$, see~\cite[there named $q_s$]{Klatt_JSM_2022}, which is a scaled eigenvalue of the trace free part of $W_1^{p}$ in the sense of~\cite{Qi_JSC_2005} (see~\cite[mainly eq. $(30)$]{Virga_EPJE_2015} for a detailed introduction for $p=3$).

In~\cite{Chingangbam2017Tensor,Collischon2024Morphometry} a generalization of the flat domain to surfaces is postulated, based on parallel transport of the co-normal vectors along the curve $\C$ into a fiducial point $\xb\in \M$.

We propose a different generalization that preserves the following properties of the MT in a flat domain:
\begin{enumerate}
    \item The shape measures $\mu_p$ deduced from the MT should be independent of the point of evaluation, and invariant under isometries.
    \item Reference shapes for perfect $p$-isotropies should correspond to regular geodesic polygons on the manifold.
\end{enumerate}
The construction using parallel transport will not yield these properties, as the canonical shape measure for a regular geodesic triangle on the sphere with ninety degree angles will correspond to the shape measure of a square in a flat domain. Moreover, for $p>1$ the construction is not independent of the fiducial point, as parallel transport of a vector around a closed curve results in an angular defect reflecting the curvature enclosed,~\cite{doCarmo1976Differential}.

Note, however, that any construction that has the second property will not be continuous as $\int_\S K\dA \to 2\pi$, as any regular geodesic $p$-polygon on the sphere with vertices approaching a great circle will have a constant shape measure $\mu_p=1$ that has to vanish once the circle is reached. This motivates the restriction $\int_\S K\dA < 2\pi$.

We denote by $\Pt^\gamma_{s\to t}\colon T_{\gamma(s)}\M\to T_{\gamma(t)}\M$ the parallel transport from $s\to t$ backwards along $\gamma$. For a vector field $\bm{w}(s) = \cos(\vartheta(s)) \conormal(s) + \sin(\vartheta(s)) \tangent(s)$ along the curve $\gamma$, the parallel transport reads
\begin{align}
    \Pt^\gamma_{s\to t}\bm{w}(s) &= \cos(\vartheta(s) + \phi(s,t))\conormal(t) + \sin(\vartheta(s) + \phi(s,t))\tangent(t),
\end{align}
with $\phi(s,t) = \int_{t}^{s} k_g(\tau)\dtau$.
The angle defect accumulation is denoted by
\begin{align}
    \eta(s,t) &\colonequals \frac{\int_{\S}K\dA}{\int_{\C}k_g\ds} \int_{t}^{s} k_g(\tau)\dtau
    = \left(\frac{2\pi}{\int_{\C} k_g\ds} - 1 \right)\int_{t}^{s} k_g(\tau)\dtau.
\end{align}
Denote by $R_t(\varphi)$ the rotation in the tangential plane $T_{\gamma(t)}\M$ by angle $\varphi$.
We define a defect corrected parallel transport by $\tPt^\gamma_{s\to t}\bm{w}(s) \colonequals R_t(-\eta(s,t))\Pt^\gamma_{s\to t}\bm{w}(s)$, given explicitly by
\begin{align}
    \tPt^\gamma_{s\to t}\bm{w}(s) &= \cos(\vartheta(s) + f(s,t))\conormal(t) + \sin(\vartheta(s) + f(s,t))\tangent(t),
\end{align}
where we write
\begin{align}
    f(s,t) &\colonequals\phi(s,t)+\eta(s,t) = \frac{2\pi}{\int_{\C}k_g\ds} \int_{t}^{s} k_g(\tau)\dtau.\label{eq:f}
\end{align}

Denoting by $\tconormal(s,t)\colonequals \tPt^\gamma_{s\to t}\conormal(s)$, we define the surface MT of rank $p$ on $\M$ associated to the fiducial point $\gamma(t)$,
\begin{align}
    \W_{1}^{p}(\gamma(t)) &\colonequals \int_{t}^{t+L}{\tconormal(s,t)}^{\otimes p}\ds, \label{eq:surface-minkowski-functional-1}
\end{align}
with $\otimes p$ denoting the $p$-times outer tensor product, $\tconormal^{\otimes p} = \tconormal\otimes\ldots\otimes\tconormal$. This definition still depends on the point $\gamma(t)$. However, this dependency does not affect the shape characterization, as will be shown in \cref{sec:point-independence}.

\begin{remark}
    For closed geodesic curves $\C$, we have $\int_\C k_g=0$ and thus $\int_S K\dA=2\pi$. While this case is excluded, the tensor definition could be extended by setting $\eta$ to zero. This coincides with the definition given in~\cite{Chingangbam2017Tensor}.
\end{remark}

\begin{remark}
    In flat space the surface MT definition coincides with the usual definition~\cite{Klatt_JSM_2022}. Other types of MT used in flat space, which are defined in terms of a position vector relative to a fixed coordinate system and also mixed position and normal vector tensor products, do not directly generalize to non-flat surfaces and are thus not adopted here.
\end{remark}

\subsection{Irreducible surface Minkowski tensors }\label{sec:irreducible-minkowski-tensors}
To analyze the shape of the curve $\C$ based on the surface MT~\eqref{eq:surface-minkowski-functional-1}, it is convenient to consider only terms of a decomposition of these tensors into their irreducible components. These components then contain information that is unique for the respective tensor rank and does not contain redundant information, cf.~\cite{Kapfer_Theses_2012,Mickel_JCP_2013,Klatt_JSM_2022,Collischon2024Morphometry}. The irreducible form of a symmetric tensor $\Mat{A}$ is denoted by $\IF{\Mat{A}}$ and represents its trace-free part, see~\cite{Virga_EPJE_2015,TothTuryshev2022Efficient}. In addition to reducing redundancy, a two-dimensional symmetric trace-free tensor of any rank $p \geq 1$ has only 2 independent components. In~\cite[Supplementary Information, section S2]{armengol2023epithelia} these two components are introduced and used to encode the eigenvalues and the eigenvector of the tensor~\cite{Qi_JSC_2005} as shape measures of the MT.\@ We follow this path and consider the irreducible surface MT of rank $p$
\begin{align}
    \IF{\W_1^p(\gamma(t))} &= \int_{t}^{t+L} \IF{\tconormal^{\otimes p}(s,t)}\ds.\label{eq:surface-minkowski-functional-if}
\end{align}
The two independent components of $\IF{\tconormal^{\otimes p}}\simeq\gb_p\colonequals(g_{p,1},g_{p,2})$ are given in terms of the angle of $\tconormal$ with the first local basis vector $\conormal$,
\begin{align}
    g_{p,1}(s,t) &= \cos(p f(s,t)), &
    g_{p,2}(s,t) &= \sin(p f(s,t)),\label{eq:independent-components-smooth}
\intertext{with $f$ as in~\eqref{eq:f}. The integral of these components can readily be computed as}
    \overline{g}_{p,1}(t) & =\int_{t}^{t+L} \cos(p f(s,t))\ds, &
    \overline{g}_{p,2}(t) & =\int_{t}^{t+L} \sin(p f(s,t))\ds. \label{eq:integrated-components-smooth}
\end{align}
Since $\IF{\W_1^p(\gamma(t))}$ is also symmetric and trace-free, its independent components are directly represented by the integrated $g_{p,i}$, i.e., $\IF{\W_1^p(\gamma(t))}\simeq\overline{\gb}_p\colonequals(\overline{g}_{p,1},\overline{g}_{p,2})$.

From the independent components the eigen-spectrum of the irreducible surface MT can be directly computed: A positive eigenvalue $\lambda_p^+(t)={\big(\overline{g}_{p,1}^2(t) + \overline{g}_{p,2}^2(t)\big)}^{1/2}$ is connected to the eigenvectors
\begin{align}\label{eq:minkowski-eigenvectors}
    \eb_{p,n}^{+}(t) &= \cos(\vartheta_{p,n}^+(t))\conormal(t) + \sin(\vartheta_{p,n}^+(t))\tangent(t) ,\quad n\in \{0,1,\ldots,p-1\},
\end{align}
with angles
\begin{align*}
    \vartheta_{p,n}^+(t)\colonequals\frac{2n\pi + \arctantwo(\overline{g}_{p,2}(t),\overline{g}_{p,1}(t))}{p}, \quad n\in \{0,1,\ldots,p-1\}.
\end{align*}
Likewise, a negative eigenvalue $\lambda_p^-(t)=-\lambda_p^+(t)$ is connected to the eigenvectors $\eb_{p,n}^{-}$ expressed in terms of their angle representation,
\begin{align*}
    \vartheta_{p,n}^-(t)\colonequals\frac{(2n+1)\pi + \arctantwo( \overline{g}_{p,2}(t), \overline{g}_{p,1}(t))}{p}, \quad n\in \{0,1,\ldots,p-1\}.
\end{align*}

\begin{remark}
    The quantities $\lambda_p$ are indeed eigenvalues of the irreducible surface Minkowski tensor $\IF{\W_1^p(\gamma(t))}$, up to the constant factor $2^{p-1}$, in the sense of generalized tensor $Z$-eigenvalues as defined in~\cite{Qi_JSC_2005}. For $p = 2$, this reduces to the classical eigenvalues of a symmetric trace-free matrix.
    Let $\nu$ be a unit vector and let $W = \IF{\nu^{\otimes p}}$ be a rank-$p$ symmetric trace-free tensor. An eigenvector $\eb$ corresponding to the eigenvalue $\lambda$ is then characterized as a solution of the system
    \[
    W \bullet^{p-1} \eb^{\otimes (p-1)} = \lambda \eb,
    \qquad \|\eb\| = 1,
    \]
    where $U \bullet^{p-1} V$ denotes the contraction of the last $p-1$ indices of $U$ with the first $p-1$ indices of $V$.
    If $\eb$ is an eigenvector, then all its rotations by $2\pi/p$ are also eigenvectors. This follows from the symmetric structure of the tensor. See~\cite{Virga_EPJE_2015} for an explicit construction in the case $p = 3$.
\end{remark}

A measure for the degree of rotational symmetry with respect to $p$ can be extracted from the magnitude of the eigenvalues $\lambda_p^\pm$~\cite{Klatt_JSM_2022,Collischon2024Morphometry} as
\begin{align}
    \mu_p = \frac{|\lambda_p^\pm|}{|\lambda_0|} \in [0,1].
\end{align}
The division by $\lambda_0=\W_1$ ensures scale invariance. In the following we will mainly discuss properties of the shape measures $\mu_p$, and will call them normalized eigenvalues of the irreducible surface MT.\@

\section{Properties of surface Minkowski tensors}\label{s:3}

We show the independence of the shape measures $\mu_p$ on the choice of the fiducial point, and discuss what this means for the eigenvectors of $\IF{\W_1^p(\gamma(t))}$. We also briefly discuss the analogs of translation and scale invariance of irreducible surface MT.\@
Moreover, we look at the special case of geodesic polygons, and see in what sense equal-angled polygons fulfill the role of perfect isotropic shapes for a given rank $p$.
Finally, we will observe a stability property of the irreducible surface MT for polygons with respect to small perturbations.

\subsection{Point independence}\label{sec:point-independence}
\begin{lemma}[Point independence of the eigenvalues]\label{lem:point-independence-eigenvalue}
    The eigenvalues $\lambda^{\pm}_p(t)$ of the irreducible surface MT $\IF{\W_1^p(\gamma(t))}$ are independent of the fiducial point.
\end{lemma}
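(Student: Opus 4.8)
The plan is to pass to complex notation and to expose a periodicity that the defect correction $\eta$ was built to create. I would collect the two independent components of $\IF{\W_1^p(\gamma(t))}$ into $z(t)\colonequals\overline{g}_{p,1}(t)+i\,\overline{g}_{p,2}(t)=\int_t^{t+L}e^{i p f(s,t)}\ds$; then $\lambda_p^\pm(t)=\pm|z(t)|$ by the eigenvalue formulas, so it would suffice to prove that $|z(t)|$ is independent of $t$.

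Next I would fix an antiderivative $\Phi$ of the geodesic curvature, say $\Phi(s)\colonequals\int_0^s k_g(\tau)\dtau$, and set $c\colonequals 2\pi/\int_\C k_g\ds$, which is well defined since $\int_\C k_g\ds=2\pi-\int_\S K\dA>0$ by \Cref{rem:gauss-bonnet} and the assumption $\int_\S K\dA<2\pi$. By \eqref{eq:f} we have $f(s,t)=c\,(\Phi(s)-\Phi(t))$, so the phase at the fiducial point factors out of the integral:
\[
  z(t)=e^{-i p c\,\Phi(t)}\int_t^{t+L}e^{i p c\,\Phi(s)}\ds,
  \qquad\text{hence}\qquad
  |z(t)|=\left|\int_t^{t+L}e^{i p c\,\Phi(s)}\ds\right|.
\]
(The unit-modulus prefactor is, incidentally, exactly what will make the \emph{eigenvectors} of $\IF{\W_1^p(\gamma(t))}$ rotate with $t$ even though the eigenvalues do not.)

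The key step will be that the integrand $h(s)\colonequals e^{i p c\,\Phi(s)}$ is $L$-periodic. Since $\gamma$ is a periodic arc-length parametrization of $\C$, the geodesic curvature is $L$-periodic, and hence $\Phi(s+L)-\Phi(s)=\int_s^{s+L}k_g(\tau)\dtau=\int_\C k_g\ds=2\pi/c$ for every $s$; therefore $p c\,(\Phi(s+L)-\Phi(s))=2\pi p\in 2\pi\mathbb{Z}$ and $h(s+L)=h(s)$. The integral of an $L$-periodic function over an interval of length $L$ does not depend on its left endpoint, since differentiating $t\mapsto\int_t^{t+L}h(s)\ds$ in $t$ gives $h(t+L)-h(t)=0$. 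Consequently $|z(t)|$, and with it $\lambda_p^\pm(t)=\pm|z(t)|$, is constant in $t$.

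I do not expect a genuine obstacle here; the only points that need care are the bookkeeping that turns the accumulated phase into an integer multiple of $2\pi$ over one period — this is precisely where the correction $\eta$ enters, as the uncorrected phase $\phi$ would pick up the angular defect $e^{i p\int_\C k_g\ds}\neq 1$ instead — and, if $\C$ is only piecewise smooth, interpreting $\int k_g$ so that corner contributions are absorbed into $\int_\C k_g\ds$, which leaves the periodicity argument untouched.
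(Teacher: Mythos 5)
Your proof is correct, but it follows a genuinely different route from the paper's. The paper differentiates the two independent components with respect to the fiducial parameter, obtaining the rotation-type system $\frac{d}{dt}\overline{g}_{p,1}(t) = p m k_g(t)\overline{g}_{p,2}(t)$ and $\frac{d}{dt}\overline{g}_{p,2}(t) = -p m k_g(t)\overline{g}_{p,1}(t)$ with $m = 2\pi/\int_\C k_g\ds$ (the boundary term $\cos\bigl(pm\int_\C k_g\ds\bigr)-1$ vanishing for exactly the reason your periodicity argument isolates), and then concludes $\frac{d}{dt}\bigl(\lambda_p^{\pm}(t)\bigr)^2 = 0$. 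You instead complexify, factor $z(t) = e^{-ipc\Phi(t)}\int_t^{t+L}e^{ipc\Phi(s)}\ds$, and reduce everything to the elementary fact that the integral of an $L$-periodic function over one period is independent of the left endpoint; the $L$-periodicity of $e^{ipc\Phi}$ encodes the same key cancellation $pc\int_\C k_g\ds = 2\pi p \in 2\pi\mathbb{Z}$ that the defect correction $\eta$ was built to produce. Your factorization buys a cleaner, almost derivative-free argument and makes the $t$-dependence of $z(t)$ explicit as a pure phase, which already anticipates \Cref{lem:point-independence-eigenvector}; the paper's component ODEs, on the other hand, are reused verbatim in the proof of that eigenvector lemma, so the two proofs are organized around the same identity but package it differently. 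Your side remarks (positivity of $\int_\C k_g\ds$ via \Cref{rem:gauss-bonnet} and the hypothesis $\int_\S K\dA<2\pi$, and the piecewise smooth caveat) are sound.
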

\begin{proof}
    First note that the angle $f$, and thus the $\IF{\W_1^p(\gamma(t))}$, are independent of the parametrization of $\C$.
    In order to investigate the independence of the irreducible surface MT on the choice of fiducial point, we differentiate
    \begin{align*}
        \frac{d}{dt}\overline{g}_{p,1}(t) & = \frac{d}{dt} \int_{t}^{L+t} \cos(p f(s,t))\ds \\
        & = p m k_g(t)\overline{g}_{p,2}(t) + \cos(p m \int_{\C}k_g \ds)- 1\\
        & = p m k_g(t)\overline{g}_{p,2}(t),
    \end{align*}
    with $m=\frac{2\pi}{\int_{\C}k_g\ds}$.
    Analogously, we obtain $\frac{d}{dt}\overline{g}_{p,2}(t) = -p m k_g(t)\overline{g}_{p,1}(t)$.
    \begin{align*}
        \frac{d}{dt}{\lambda_p^{\pm}(t)}^2 &= 2\overline{g}_{p,1}(t)\frac{d}{dt}\overline{g}_{p,1}(t)+2\overline{g}_{p,2}(t)\frac{d}{dt}\overline{g}_{p,2}(t) = 0.\qedhere
    \end{align*}
\end{proof}

As the eigenvectors of $\IF{\W_1^p(t)}$ are defined only in the tangent space $T_{\gamma(t)}\M$, any notion of point independence necessarily involves a transport between tangent spaces. In our case, this transport is realized by the defect-corrected parallel transport.

\begin{lemma}[Point independence of the eigenvectors]\label{lem:point-independence-eigenvector}
    The eigenvectors $\eb_{p,n}^\pm(t)$ and $\eb_{p,n}^\pm(s)$ of the irreducible surface MT $\IF{\W_1^p(t)}$ and $\IF{\W_1^p(s)}$, respectively,
    fulfill
    \[\tPt^\gamma_{s\to t}\eb_{p,n}^\pm(s)=\eb_{p,n}^\pm(t)\quad \textrm{for all } t,s\in\rinterval{0}{L}.\]
\end{lemma}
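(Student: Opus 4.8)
The plan is to turn the claimed identity into a statement about angles in the local frame $(\conormal,\tangent)$, where both objects involved are already written explicitly. From $\tconormal(s,t) = \sin(f(s,t))\tangent(t) + \cos(f(s,t))\conormal(t)$ one sees that $\tPt^\gamma_{s\to t}$ is an orientation-preserving linear isometry $T_{\gamma(s)}\M\to T_{\gamma(t)}\M$ sending $\conormal(s)$ (angle $0$ with $\conormal(s)$) to $\tconormal(s,t)$ (angle $f(s,t)$ with $\conormal(t)$); being a rotation, it therefore maps any vector at angle $\alpha$ with $\conormal(s)$ to the vector at angle $\alpha+f(s,t)$ with $\conormal(t)$. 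Since $\eb_{p,n}^{\pm}(s)$ is, by definition, the vector at angle $\vartheta_{p,n}^{\pm}(s)$ with $\conormal(s)$, it remains to show that $\vartheta_{p,n}^{\pm}(s)+f(s,t)$ reproduces $\vartheta_{p,n}^{\pm}(t)$ (up to a relabeling of $n$, which changes nothing since $\vartheta_{p,n+p}^{\pm}=\vartheta_{p,n}^{\pm}+2\pi$ gives the same vector).

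For the angle, I would reuse the two ODEs established in the proof of \Cref{lem:point-independence-eigenvalue}: $\frac{d}{dt}\overline{g}_{p,1} = pmk_g\,\overline{g}_{p,2}$ and $\frac{d}{dt}\overline{g}_{p,2} = -pmk_g\,\overline{g}_{p,1}$ with $m=2\pi/\int_{\C}k_g\ds$. Setting $z(t)\colonequals \overline{g}_{p,1}(t)+\mathrm{i}\,\overline{g}_{p,2}(t)$ these combine to $z' = -\mathrm{i}\,pmk_g\,z$, so $z(t)=z(s)\exp\bigl(\mathrm{i}\,pm\int_t^s k_g\dtau\bigr)=z(s)\exp\bigl(\mathrm{i}\,p f(s,t)\bigr)$, using $f(s,t)=m\int_t^s k_g\dtau$. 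As $|z|\equiv\lambda_p^{+}$ is constant (this is exactly \Cref{lem:point-independence-eigenvalue}), this says that $\operatorname{arctan2}(\overline{g}_{p,2}(t),\overline{g}_{p,1}(t)) = \operatorname{arctan2}(\overline{g}_{p,2}(s),\overline{g}_{p,1}(s)) + p f(s,t)$ modulo $2\pi$. Dividing by $p$ and adding $2n\pi/p$ (respectively $(2n+1)\pi/p$) then gives $\vartheta_{p,n}^{\pm}(t) = \vartheta_{p,n'}^{\pm}(s) + f(s,t)$ for a suitable index $n'$, and combined with the angle-shift description of $\tPt^\gamma_{s\to t}$ above this yields $\tPt^\gamma_{s\to t}\eb_{p,n'}^{\pm}(s)=\eb_{p,n}^{\pm}(t)$, i.e., the two eigenvector families coincide under the defect-corrected transport.

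The only genuine subtlety — and the step I would be most careful about — is the bookkeeping of the $2\pi$-branch of $\operatorname{arctan2}$ and the associated cyclic shift $n\mapsto n'$; everything else is direct substitution. The clean way around it is to replace $\operatorname{arctan2}(\overline{g}_{p,2},\overline{g}_{p,1})$ throughout by the continuous lift $\psi$ with $\psi'=-pmk_g$, which the ODE for $z$ guarantees exists on all of $[0,L]$ as soon as $\lambda_p^{+}>0$; with this lift one gets the branch-free identity $\vartheta_{p,n}^{\pm}(t)=\vartheta_{p,n}^{\pm}(s)+f(s,t)$ and the relabeling disappears entirely, so the transport matches eigenvectors index by index. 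The remaining case $\lambda_p^{+}=0$ is vacuous, since then every tangent vector is an eigenvector of $\IF{\W_1^p}$.
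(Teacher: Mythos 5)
Your proposal is correct and follows essentially the same route as the paper: both use the derivative relations $\frac{d}{dt}\overline{g}_{p,1}=pmk_g\overline{g}_{p,2}$, $\frac{d}{dt}\overline{g}_{p,2}=-pmk_g\overline{g}_{p,1}$ from the eigenvalue lemma to show that the angle $\operatorname{arctan2}(\overline{g}_{p,2},\overline{g}_{p,1})$ rotates at rate $-pmk_g$, hence $\vartheta_{p,n}^{\pm}$ at rate $-\partial_s f$, and then combine this with the explicit form $\tPt^\gamma_{s\to t}\eb_{p,n}^{\pm}(s)=\sin(\vartheta_{p,n}^{\pm}(s)+f(s,t))\tangent(t)+\cos(\vartheta_{p,n}^{\pm}(s)+f(s,t))\conormal(t)$. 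Your packaging via the complex ODE $z'=-\mathrm{i}pmk_g z$ and the continuous angle lift is only a cosmetic variant (and in fact treats the $\operatorname{arctan2}$-branch and the degenerate case $\lambda_p^{+}=0$ a bit more explicitly than the paper, which instead assumes w.l.o.g.\ $\overline{g}_{p,1}>0$).
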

\begin{proof}
    W.l.o.g.\ we consider only the case $\overline{g}_{p,1}(s)>0$, and consider the quotient $q(s)\colonequals \frac{\overline{g}_{p,2}(s)}{\overline{g}_{p,1}(s)}$. It holds
    \begin{align*}
        \frac{d}{ds}q(s)
        &= \frac{1}{\overline{g}_{p,1}^2(s)}\big( \frac{d}{ds}\overline{g}_{p,2}(s)\overline{g}_{p,1}(s)-\frac{d}{ds}\overline{g}_{p,1}(s)\overline{g}_{p,2}(s)\big)\\
        &= -p m k_g(s)\frac{1}{\overline{g}_{p,1}^2(s)} \big(\overline{g}_{p,2}^2(s)+\overline{g}_{p,1}^2(s)\big)\\
        &= -p m k_g(s) \big(q^2(s)+1\big).
    \end{align*}
    Thus, we have $\frac{d}{ds}\arctan(q(s)) = -p m k_g(s)$ and $\frac{d}{ds} \vartheta_{p,n}^\pm(s) = -m k_g(s) = -\partial_s f(s,t)$.
    This concludes the proof, as
    \begin{align*}
        \tPt^\gamma_{s\to t}\eb_{p,n}^{\pm}(s)
        &= \cos\left(\vartheta_{p,n}^{\pm}(s)+f(s,t)\right)\conormal(t) + \sin\left(\vartheta_{p,n}^{\pm}(s)+f(s,t)\right) \tangent(t) . \qedhere
    \end{align*}
\end{proof}

\begin{remark}
    From the point independence of the irreducible surface MT $\IF{\W_1^p}$ it follows that also the surface MT $\W_1^p$ are point independent, i.e., the tensors are constant w.r.t.\ the defect-corrected transport $\tPt^\gamma$. This is a consequence of the decomposition of the symmetric tensors in their irreducible components,
    cf.~\cite{Kapfer_Theses_2012,TothTuryshev2022Efficient}.
\end{remark}

\subsection{Invariance under embeddings, translations, and scaling}
Translation and scaling invariance are essential properties of the flat space MT.\@ For the generalization to manifolds~\eqref{eq:surface-minkowski-functional-1} similar properties hold, but must be seen in the context of the curved spaces. We will state them in this section. They are a consequence of the fact, that although $\M$ is assumed to be isometrically embedded into $\R^3$, the construction of the surface MT is intrinsic to the manifold $\M$.

We first note, that any metric preserving change of the manifold $\M$ as a submanifold of $\R^3$, like translation or rotation, does not change any intrinsic quantity used to defined the surface MT.\@

\begin{lemma}
    The surface MT are invariant under isometries of the manifold.
\end{lemma}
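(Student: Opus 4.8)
The plan is to show that every ingredient entering the definition of $\W_1^p(\gamma(t))$ in \eqref{eq:surface-minkowski-functional-1}, and hence of $\IF{\W_1^p}$ and the shape measures $\mu_p$, is determined purely by the Riemannian metric of $\M$ together with the isometry class of the pair $(\M,\S)$, so that any isometry $\Phi\colon\M\to\M'$ (or a metric-preserving ambient motion of $\M\subset\R^3$) carries the data on $\M$ to the corresponding data on $\M'$ and leaves the tensors unchanged up to the pushforward $d\Phi$. Concretely, let $\Phi\colon\M\to\M'$ be an isometry, let $\S'=\Phi(\S)$, $\C'=\Phi(\C)$, and let $\gamma'=\Phi\circ\gamma$ be the induced parametrization of $\C'$. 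Since $\Phi$ is a Riemannian isometry, $\gamma'$ is again an arc-length parametrization with the same length $L$, the moving frame transforms as $(\conormal',\tangent',\nb')=(d\Phi\,\conormal, d\Phi\,\tangent, \pm\nb\circ\Phi^{-1})$ — with the sign fixed to $+$ once we require $\Phi$ to be orientation-preserving, as is implicit for the frame convention $(\conormal,\tangent,\nb)$ to be preserved — and, crucially, the geodesic curvature is an intrinsic quantity, so $k_g'(\Phi(\gamma(s)))=k_g(\gamma(s))$ pointwise along the curve.

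From this the argument is essentially a chain of "intrinsic $\Rightarrow$ invariant" observations carried through the definitions in \Cref{sec:geometric-setup,sec:irreducible-minkowski-tensors}. First, $\int_{\C'}k_g'\ds=\int_{\C}k_g\ds$ and $\int_{\S'}K'\dA=\int_{\S}K\dA$ (Gauss's Theorema Egregium for $K$, and invariance of the volume form under isometry), so the constant $m=2\pi/\int_{\C}k_g\ds$ and the defect accumulation $\eta(s,t)$ are unchanged; consequently the angle $f(s,t)$ in \eqref{eq:f}, being built solely from $k_g$ and that integral, satisfies $f'(s,t)=f(s,t)$. Next, parallel transport $\Pt^\gamma$ is metric-compatible and hence commutes with isometries, i.e. $d\Phi\circ\Pt^\gamma_{s\to t}=\Pt^{\gamma'}_{s\to t}\circ d\Phi$, and the tangential rotation $R_t(\varphi)$ likewise intertwines with $d\Phi$ since $d\Phi$ is a linear isometry between the tangent planes preserving orientation; therefore $d\Phi\circ\tPt^\gamma_{s\to t}=\tPt^{\gamma'}_{s\to t}\circ d\Phi$ and the transported co-normal obeys $\tconormal'(s,t)=d\Phi\,\tconormal(s,t)$. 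Taking the $p$-fold tensor power and integrating in $s$ over $[t,t+L]$ then gives $\W_1^p(\gamma'(t))=(d\Phi)^{\otimes p}\,\W_1^p(\gamma(t))$, which is the precise sense in which the surface MT is isometry-invariant. Passing to the irreducible (trace-free) part commutes with the orthogonal change of frame $d\Phi$, so the same relation holds for $\IF{\W_1^p}$; expressing it in the moving frame $(\conormal,\tangent)$ shows the component vectors $\overline{\gb}_p$ and the eigen-angles $\vartheta^{\pm}_{p,n}$ agree, while the eigenvalues $\lambda^{\pm}_p$ and the normalized eigenvalues $\mu_p=|\lambda^{\pm}_p|/|\lambda_0|$ — being scalars built from inner products that $d\Phi$ preserves — are literally equal, and the eigenvectors correspond under $d\Phi$.

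The only genuinely delicate point, and the one I would treat most carefully, is the interplay of orientation: the moving frame convention $(\conormal,\tangent,\nb)$ right-handed and the formulas \eqref{eq:independent-components-smooth} for $g_{p,i}$ in terms of the signed angle of $\tconormal$ against $\conormal$ all depend on a consistent choice of orientation, so the clean statement "$(d\Phi)^{\otimes p}$-equivariance of $\W_1^p$" requires $\Phi$ to be orientation-preserving; an orientation-reversing isometry flips the sign of $k_g$ and of the angle $f$, which conjugates $\overline{g}_{p,2}\mapsto-\overline{g}_{p,2}$ but still leaves $\lambda^\pm_p$ and $\mu_p$ invariant, so the shape measures are invariant under the full isometry group while the signed tensor components are only equivariant under the orientation-preserving subgroup. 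I would state the lemma for the shape measures $\mu_p$ (and the eigenvalues), note equivariance of $\W_1^p$ under orientation-preserving isometries, and relegate the sign bookkeeping for orientation-reversing maps to a short remark. Everything else is routine: no differential equations to solve, just the observation that each constituent — $L$, $k_g$, $K$, the volume form, parallel transport, tangential rotations — is intrinsic, already recorded above or standard.
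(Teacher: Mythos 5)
Your argument is correct and follows essentially the same reasoning the paper relies on: the paper gives no detailed proof, justifying the lemma only by the preceding observation that every quantity entering the construction ($L$, $k_g$, $K$, the area form, parallel transport, tangential rotations) is intrinsic, which is exactly the chain of ``intrinsic $\Rightarrow$ equivariant under $d\Phi$'' steps you spell out. Your additional care about the precise meaning of invariance (equivariance of $\W_1^p$ under $(d\Phi)^{\otimes p}$) and about orientation-reversing isometries flipping $\overline{g}_{p,2}$ while leaving $\lambda_p^\pm$ and $\mu_p$ unchanged is a sound refinement of what the paper leaves implicit.
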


A generalization of translation in flat space to manifolds is a flow generated by Killing vector fields. These flows are continuous isometries of the manifold, i.e., are metric preserving and in particular preserve the angles used to define the surface MT.\@

\begin{lemma}[Translation invariance]
    The surface MT are invariant under any flow of the curve $\C$ generated by a Killing vector field on the manifold $\M$.
\end{lemma}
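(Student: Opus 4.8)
The plan is to reduce the claim to the preceding lemma on invariance under isometries, now applied along a one-parameter curve of isometries rather than a single one. By definition, a Killing vector field $X$ on $\M$ has a flow $\{\Phi_\epsilon\}$ consisting of (local) isometries: for $|\epsilon|$ small, $\Phi_\epsilon$ is an isometric diffeomorphism of a neighborhood $U\supset\S$ onto its image, with $\Phi_0=\mathrm{id}$, so that each $\Phi_\epsilon$ is orientation preserving. The flowed curve $\gamma_\epsilon\colonequals\Phi_\epsilon\circ\gamma$ parametrizes $\C_\epsilon\colonequals\Phi_\epsilon(\C)$ with unit speed and bounds $\S_\epsilon\colonequals\Phi_\epsilon(\S)$, and I claim that every quantity entering \eqref{eq:surface-minkowski-functional-1} is carried along by the isometry $\Phi_\epsilon$.

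Concretely I would verify, in order: (i) arc length along $\gamma$ is preserved, and the outer co-normal/tangent pair $(\conormal,\tangent)$ of $\C$ is mapped by $d\Phi_\epsilon$ to the outer co-normal/tangent pair of $\C_\epsilon$, since $\Phi_\epsilon$ is an orientation-preserving isometry taking $\S$ onto $\S_\epsilon$; (ii) the geodesic curvature $k_g$ and the Gaussian curvature $K$ are intrinsic, hence $k_g$ along $\gamma_\epsilon$ agrees with $k_g$ along $\gamma$ at corresponding parameters and $K\circ\Phi_\epsilon=K$, so that $\int_{\C_\epsilon}k_g\ds=\int_\C k_g\ds$ and $\int_{\S_\epsilon}K\dA=\int_\S K\dA$; (iii) parallel transport then intertwines with the differential, $\Pt^{\gamma_\epsilon}_{s\to t}\circ d\Phi_\epsilon=d\Phi_\epsilon\circ\Pt^{\gamma}_{s\to t}$, and since $\eta(s,t)$ depends only on $k_g$ and $\int_\S K\dA$, the angle $f(s,t)$ of \eqref{eq:f} is unchanged, so the defect-corrected transported co-normal of $\gamma_\epsilon$ equals $d\Phi_\epsilon\,\tconormal(s,t)$. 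Substituting into \eqref{eq:surface-minkowski-functional-1} and using linearity of the integral and of the $p$-fold tensor power,
\[
  \W_1^p\big(\gamma_\epsilon(t)\big)=\int_t^{t+L}\big(d\Phi_\epsilon\,\tconormal(s,t)\big)^{\otimes p}\ds=(d\Phi_\epsilon)^{\otimes p}\,\W_1^p\big(\gamma(t)\big),
\]
which is the asserted invariance under the flow; in particular the irreducible part, the eigenvalues $\lambda_p^\pm$ and the shape measures $\mu_p$ are literally unchanged, because $d\Phi_\epsilon$ restricts to a linear isometry between tangent spaces.

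I do not expect a genuine obstacle: the argument is the isometry-invariance lemma transported along a curve of isometries. Two points only need care in the write-up. First, \emph{invariance} here cannot mean equality of tensors in one fixed space — the target $(T_{\gamma_\epsilon(t)}\M)^{\otimes p}$ moves with $\epsilon$ — so the statement must be phrased through the push-forward $d\Phi_\epsilon$, exactly the caveat already present for eigenvectors in \Cref{lem:point-independence-eigenvector}. Second, a Killing field need not be complete on a general $\M$; since the construction only involves $\S$ and an arbitrarily thin neighborhood of it, one works with small $\epsilon$ on a relatively compact $U\supset\S$, or states the result on the maximal flow domain. Neither point requires any computation beyond what the preceding lemma supplies.
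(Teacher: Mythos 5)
Your proposal is correct and follows essentially the same route the paper takes: the paper offers no separate proof beyond the remark that a Killing flow consists of isometries which preserve the metric and hence every intrinsic quantity (lengths, $k_g$, $K$, and the angles $f$) entering the definition, and your argument is simply a careful elaboration of exactly that reduction to the preceding isometry-invariance lemma. The two caveats you raise (phrasing invariance via the push-forward $d\Phi_\epsilon$ and possible incompleteness of the Killing field) are sensible refinements but do not change the substance.
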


\begin{remark}
  On the sphere $\M=S^2$ Killing vector fields generate rotations about any axis. If $\M$ is a surface of revolution there is at least the rotation axis of the surface about which the curves can be rotated without changing their surface MT.\@ However, several manifolds do not posses any Killing vector fields.
\end{remark}

The property known as scaling invariance translates to invariance under constant conformal changes of the metric, i.e., $g\mapsto \lambda^2 g$ with $\lambda\in \R\backslash\{0\}$. While this changes the length of the curve, the angles $f$ defined by~\eqref{eq:f} are still preserved, and thus the normalized Minkowski eigenvalues of the irreducible surface MT are preserved:

\begin{lemma}[Scaling invariance]
    The normalized eigenvalues $\mu_p$ of the irreducible surface MT $\IF{\W_1^p}$ are invariant under constant conformal changes of the metric of the manifold.
\end{lemma}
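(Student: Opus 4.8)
The plan is to track how each ingredient of \eqref{eq:surface-minkowski-functional-if} transforms under the substitution $g\mapsto\tilde g=\lambda^2 g$ and to observe that the only net effect is an overall factor $|\lambda|$ which cancels in the quotient $\mu_p=|\lambda_p^{\pm}|/|\lambda_0|$. Since $\lambda$ is constant, the conformal change is a pure homothety: it leaves geodesics, hence parallel transport, unchanged and multiplies all lengths by $|\lambda|$; the set $\S\subset\M$ is unchanged, only its metric is rescaled. Concretely, if $\gamma\colon[0,L]\to\M$ is the arc-length parametrization of $\C$ with respect to $g$, then $\tilde\gamma(\tilde s)\colonequals\gamma(\tilde s/|\lambda|)$ is the arc-length parametrization with respect to $\tilde g$, with total length $\tilde L=|\lambda|L$, and the fiducial point $\gamma(t)$ corresponds to parameter $\tilde t=|\lambda|t$.

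First I would record the transformation of the curvature quantities. A constant conformal rescaling gives $\tilde K=K/\lambda^2$ and $\tilde k_g\,\mathrm{d}\tilde s=k_g\ds$ along $\C$ (equivalently $\tilde k_g=k_g/|\lambda|$), while $\mathrm{d}\tilde A=\lambda^2\dA$. In particular $\int_{\S}\tilde K\,\mathrm{d}\tilde A=\int_{\S}K\dA$ and $\int_{\C}\tilde k_g\,\mathrm{d}\tilde s=\int_{\C}k_g\ds$, so the hypothesis $\int_{\S}K\dA<2\pi$ is preserved and the prefactor $m=2\pi/\int_{\C}k_g\ds$ is unchanged. Substituting this into \eqref{eq:f} and using $\int_{\tilde t}^{\tilde s}\tilde k_g(\tilde\tau)\,\mathrm{d}\tilde\tau=\int_{t}^{s}k_g(\tau)\dtau$ shows that the angle function is pointwise invariant, $\tilde f(\tilde s,\tilde t)=f(s,t)$. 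Hence the densities $g_{p,1},g_{p,2}$ from \eqref{eq:independent-components-smooth}, being functions of $f$ alone, are unchanged.

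Next I would integrate. For the integrated components \eqref{eq:integrated-components-smooth}, the substitution $\tilde s=|\lambda|s$ gives
\begin{align*}
  \tilde{\overline{g}}_{p,1}(\tilde t)=\int_{\tilde t}^{\tilde t+\tilde L}\cos\big(p\,\tilde f(\tilde s,\tilde t)\big)\,\mathrm{d}\tilde s=|\lambda|\int_{t}^{t+L}\cos\big(p\,f(s,t)\big)\ds=|\lambda|\,\overline{g}_{p,1}(t),
\end{align*}
and likewise $\tilde{\overline{g}}_{p,2}(\tilde t)=|\lambda|\,\overline{g}_{p,2}(t)$. Therefore $\tilde\lambda_p^{+}=\big(\tilde{\overline{g}}_{p,1}^{2}+\tilde{\overline{g}}_{p,2}^{2}\big)^{1/2}=|\lambda|\,\lambda_p^{+}$, and the same factor appears in $\tilde\lambda_p^{-}=-\tilde\lambda_p^{+}$. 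On the other hand $\tilde\lambda_0=\tilde\W_1=\int_{\C}1\,\mathrm{d}\tilde s=\tilde L=|\lambda|L=|\lambda|\,\lambda_0$, so $\mu_p=|\lambda_p^{\pm}|/|\lambda_0|$ is invariant, which is the claim. (The eigenvector angles $\vartheta_{p,n}^{\pm}$ depend only on $\operatorname{arctan2}(\overline{g}_{p,2},\overline{g}_{p,1})$, which is unchanged, so the angle representation of the eigenvectors is invariant as well.)

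The only genuinely delicate point is the transformation rule $\tilde k_g\,\mathrm{d}\tilde s=k_g\ds$ for the geodesic curvature. I would either invoke the standard conformal law $\tilde k_g\,\mathrm{d}\tilde s=k_g\ds+\partial_{\conormal}\phi\ds$ for $\tilde g=e^{2\phi}g$, whose extra term vanishes when $\phi$ is constant, or argue more elementarily that a homothety preserves every intrinsic angle and every ratio of lengths by construction, so that $f$ is automatically invariant and only $\W_1$ picks up the factor $|\lambda|$. Everything else reduces to a one-dimensional change of variables.
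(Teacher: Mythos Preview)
Your proof is correct and follows the same line as the paper's own justification: the key observation in both is that under $g\mapsto\lambda^2 g$ the angle function $f$ of \eqref{eq:f} is preserved while only lengths scale by $|\lambda|$, so the quotient $\mu_p=|\lambda_p^\pm|/|\lambda_0|$ is invariant. The paper states this in a single sentence preceding the lemma and gives no further details; you have supplied the explicit transformation rules for $K$, $k_g$, $\mathrm{d}s$, and $\mathrm{d}A$ and carried the factor $|\lambda|$ through the integrals, which makes the argument fully rigorous.
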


Note that this does not hold true for general conformal changes, as for $\tilde{g}=\exp(2\varphi)g$, with a smooth function $\varphi:\M\to \R$, we have $\int_{C}\tilde{k}_g \tilde{\ds} = \int_{C}\exp(-\varphi)(k_g +g(\nabla \varphi,\nu)) \ds$, and a similar expression for only a section of a curve. Thus, we cannot expect invariance of the surface MT.\@

\subsection{Geodesic polygons}\label{sec:geodesic-polygons}

For piecewise smooth curves, the definition of integrated curvature is amended by the sum of turning angles. A special case are geodesic polygons, where the curvature along the smooth edges vanishes. This simplifies the definition of the independent components of the irreducible surface MT considerably.

\begin{wrapfigure}{r}{0.3\textwidth}
\centering
    \includegraphics[width=\linewidth]{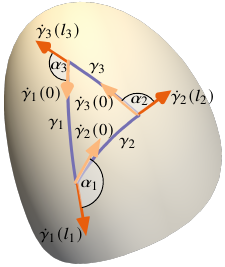}
    \caption{\label{fig:sketch-naming}Illustration of the naming: Geodesic triangle with tangents $\dot{\gamma}_i$ and angles $\alpha_i$ in the corners.}
\vspace{-3em}
\end{wrapfigure}
We consider $q$-sided polygons $\C_q$, with $3\leq q \in \mathbb{N}$. Let $l_i$, $i=1,\ldots,q$ be the length of the sides, $\gamma_i:[0,l_i]\to\M$ a unit speed parametrization of the geodesic that is the $i$-th side,
and $\alpha_i$ the turning angles with respect to the prescribed orientation.

Without loss of generality, we are considering a fiducial point $\gamma_1(t)$, $t\in\rinterval{0}{l_1}$ on the first segment. We split the side $\gamma_1$ into two parts. By abuse of notation, we set $\gamma_1\colon\interval{t}{l_1}\to\M$, and introduce a new segment $\gamma_{q+1}\colon\rinterval{0}{t}\to\M$. Then we set $f_1=0$, and
\begin{align*}
    f_i &\colonequals f_{i-1} + \frac{2\pi \alpha_{i-1}}{\sum_{j=1}^q\alpha_j}=\frac{2\pi }{\sum_{j=1}^q\alpha_j}\sum_{j=1}^{i-1}\alpha_{j}, \quad i=2,\ldots,q+1.
\end{align*}
Obviously, $f_{q+1}=2\pi$. Note that $\sum_{j=1}^q\alpha_j\neq 0$ due to our requirement that the integrated Gaussian curvature $\int_\S K\dA < 2\pi$.
The transported normals $\tconormal_i$, $i=1,\ldots,q+1$, are then defined as
\begin{align}
    \tconormal_i &= \cos(f_i)\conormal_1(t) + \sin(f_i)\tangent_1(t), \label{eq:turned-conormals-geodesic-polygon}
\end{align}
with $\tangent_1=\dot{\gamma}_1$ the tangent vector along the first geodesic segment.
Again, it is easy to see that $\tconormal_1=\tconormal_{q+1}=\conormal_1(t)$.

The definition of the independent components~\eqref{eq:integrated-components-smooth} then reduces to the formulas
\begin{align}\label{eq:integrated-components-polygon}
    \overline{g}_{p,1} &= \sum_{i=1}^{q}l_i\cos(p f_i),&
    \overline{g}_{p,2} &= \sum_{i=1}^{q}l_i\sin(p f_i).
\end{align}
Correspondingly, we find the eigenvalues of the associated irreducible surface MT as
\begin{align}
    \lambda^{\pm}_p\colonequals \pm \sqrt{\overline{g}_{p,1}^2+ \overline{g}_{p,2}^2} \label{eq:eigenvalues-geodesic-polygon}
\end{align}
and eigenvectors as in~\eqref{eq:minkowski-eigenvectors}. Similar as before, we can introduce the normalized eigenvalues $\mu_p \colonequals |\lambda^{\pm}_p|/L$, where $L=\sum_{i=1}^{q}l_i$.

\subsection{Regular polygons}
We consider the special case that all turning angles of the polygon are equal, i.e., $\alpha_i\equiv\alpha$. We immediately get
\begin{align*}
    f_i =\frac{2(i-1)}{q}\pi, \quad i=1,\ldots,q+1.
\end{align*}
This means,
\begin{align*}
    \overline{g}_{p,1}& = \sum_{i=1}^{q}l_i\cos\left(\frac{p}{q}2(i-1)\pi\right), &
    \overline{g}_{p,2}& = \sum_{i=1}^{q}l_i\sin\left(\frac{p}{q}2(i-1)\pi\right).
\end{align*}

In the following theorem we discuss the properties of regular polygons with equal angles and with equal segment lengths. It turns out that these properties lead to extreme eigenvalues of the irreducible surface MT.\@

\begin{theorem}[Eigenvalue for equal angle polygons]\label{thm:eigenvalues-equal-angle-polygon}
    For an equal angle polygon $\C_q$ with $q$ vertices the normalized eigenvalues $\mu_p$ of the irreducible surface MT $\IF{\W_1^p(\C_q)}$ fulfill
    \[
      \mu_p = \left\{\begin{array}{ll}
          1 & \text{ if }\frac{p}{q} \in\mathbb{N} \\
          0 & \text{ if }\frac{p}{q} \notin\mathbb{N}\text{ and }l_i=\frac{L}{q}.
      \end{array}\right.
    \]
\end{theorem}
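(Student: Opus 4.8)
The plan is to compute the two independent components $\overline{g}_{p,1}$ and $\overline{g}_{p,2}$ directly from the reduced formulas \eqref{eq:integrated-components-polygon} specialized to equal angles, where $f_i = \frac{2(i-1)}{q}\pi$, and then evaluate $\lambda_p^\pm = \pm\sqrt{\overline{g}_{p,1}^2 + \overline{g}_{p,2}^2}$ and normalize by $L$. It is natural to package the two real sums into a single complex exponential sum: writing $z = \exp(2\pi\mathrm{i} p/q)$, we have $\overline{g}_{p,1} + \mathrm{i}\,\overline{g}_{p,2} = \sum_{i=1}^q l_i\, z^{i-1}$, so that $\mu_p = \frac{1}{L}\bigl|\sum_{i=1}^q l_i z^{i-1}\bigr|$. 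The whole theorem then reduces to understanding this geometric-type sum in the two cases.

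In the first case, $\frac{p}{q}\in\mathbb{N}$ forces $z = \exp(2\pi\mathrm{i} p/q) = 1$, hence $\sum_{i=1}^q l_i z^{i-1} = \sum_{i=1}^q l_i = L$ regardless of the individual side lengths, giving $\mu_p = L/L = 1$. This case needs no assumption on the $l_i$ and is essentially immediate once the complex reformulation is in place. In the second case we additionally assume $l_i \equiv L/q$, so the sum becomes $\frac{L}{q}\sum_{i=1}^q z^{i-1}$, a genuine finite geometric series. Since $\frac{p}{q}\notin\mathbb{N}$, we have $z\neq 1$ but $z^q = \exp(2\pi\mathrm{i} p) = 1$, so $\sum_{i=1}^q z^{i-1} = \frac{z^q - 1}{z - 1} = 0$, whence $\overline{g}_{p,1} = \overline{g}_{p,2} = 0$ and therefore $\mu_p = 0$.

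The main (and only mild) obstacle is bookkeeping rather than conceptual: one must be careful that in the second case the hypothesis $\frac{p}{q}\notin\mathbb{N}$ is exactly what guarantees $z\neq 1$, so that dividing by $z-1$ is legitimate, while $z^q=1$ still holds because $pq/q = p\in\mathbb{N}$ makes $p$ an integer and $\exp(2\pi\mathrm{i} p)=1$. One should also note that the case distinction is exhaustive for the stated conclusion and that $L = \sum_i l_i > 0$ so the normalization is well-defined; the requirement $\sum_{j=1}^q\alpha_j\neq 0$ (equivalently $\int_\S K\dA < 2\pi$) was already used to make the $f_i$ well-defined, so no new hypothesis enters here. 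I would present the argument by first recording the complex identity $\overline{g}_{p,1}+\mathrm{i}\,\overline{g}_{p,2} = \sum_{i=1}^q l_i\exp\!\bigl(\tfrac{2\pi\mathrm{i} p(i-1)}{q}\bigr)$, then splitting into the two cases as above, each of which is a one-line evaluation.
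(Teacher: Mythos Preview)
Your proof is correct and follows essentially the same approach as the paper: both compute $\overline{g}_{p,1}$ and $\overline{g}_{p,2}$ directly from the equal-angle formulas and evaluate the resulting sums. The only difference is packaging---the paper works with the real sine and cosine sums separately and appeals to the closed-form identities $\sum_{i=1}^{q}\sin\bigl(\tfrac{2p(i-1)}{q}\pi\bigr)$ and $\sum_{i=1}^{q}\cos\bigl(\tfrac{2p(i-1)}{q}\pi\bigr)$ (which vanish via a $\sin(p\pi)$ factor), whereas you combine them into the single complex geometric series $\sum_{i=1}^{q} l_i\,z^{i-1}$ with $z=e^{2\pi\mathrm{i}p/q}$; the two computations are equivalent, with your version arguably a bit more transparent.
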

\begin{proof}
    For $\frac{p}{q} \in\mathbb{N}$, we can easily see $\overline{g}_{q,1}(t)= L$, $\overline{g}_{q,2}(t)= 0$.
    With $\lambda_0^+ = L$, this implies $\lambda^{+}_q/\lambda_0^+ = 1$.

    If $\frac{p}{q}\notin\mathbb{N}$, then
    \begin{align*}
        \sum_{i=2}^{q}\sin\left(\frac{p}{q}2(i-1)\pi\right) & = \frac{\sin\left(\frac{p(q-1)}{q}\pi\right) \sin\left(p\pi\right) }{\sin\left(\frac{p}{q}\pi\right)} = 0,
    \end{align*}
    and
    \begin{align*}
        1+ \sum_{i=2}^{q}\cos\left(\frac{p}{q}2(i-1)\pi\right)
        & =\frac{\sin\left(p\pi\right)\cos\left(\frac{p(q-1)}{q}\pi \right) }{\sin\left(\frac{p}{q}\pi\right)} = 0.
    \end{align*}
    Thus, for equal angles and equal lengths of the sides, $\lambda^{\pm}_p=0$ if $\frac{p}{q}\notin\mathbb{N}$.
\end{proof}

\begin{remark}
    The assertion of \Cref{thm:eigenvalues-equal-angle-polygon} allows to characterize polygonal shapes on surfaces. This is in analogy to the flat case. However, not all surfaces allow for polygons with equal angles and equal segment lengths. Numerical examples are discussed in \Cref{sec:shape-characterization}.
\end{remark}

\subsection{Stability of surface Minkowski tensors}
As motivated by~\cite{SchroderTurk2009Tensorial} in flat space, robustness of the MT against small perturbations is important, see also~\cite{Mickel_JCP_2013,happel2025quantifying}. To demonstrate robustness for the surface MT we consider a small deformation of a geodesic polygon $C_q$ by adding an additional vertex $\xb_{k+1/2}$ inside or outside the polygon between the corners $\xb_k=\gamma_k(0)$ and $\xb_{k+1}=\gamma_k(l_k)$. These three points form a geodesic triangle in itself, with turning angles $\beta_i,\;i=1,2,3$ associated to the points $\xb_k,\xb_{k+1/2},\xb_{k+1}$, respectively. Here, we orient the additional triangle in a way, that is consistent with the orientation of $C_q$.
For the enclosed triangular subdomain $\triangle\subset\M$ we set
\begin{align*}
    2\pi \pm \sum_{i=1}^{3}\beta_i=\int_{\triangle}K\dA \equalscolon \kappa_\epsilon,
\end{align*}
where the $\pm$-sign is negative, if the new vertex is outside the polygon, and positive if the new vertex is inside.
Obviously, $ \kappa_\epsilon=0$ if $K\equiv 0$, and $ \kappa_\epsilon \to 0$ if the enclosed area of the triangle goes to 0.
The new vertex $\xb_{k+1/2}$ also defines an extended polygonal curve, denoted by $\tilde{\C}_{q+1}$. For $|\kappa_\epsilon|<\kappa_0$ small enough, the enclosed total Gaussian curvature $\int_{\tilde{\S}_{q+1}} K\dA < 2\pi$.

\begin{lemma}[Continuity of surface Minkowski tensors]\label{lem:continuity}
    Let $\C_q$ be a geodesic polygon and $\tilde{\C}_{q+1}$ a perturbation as above with $|\kappa_\epsilon|<\kappa_0$. The irreducible surface MT $\IF{\W_1^p(\tilde{C}_{q+1})}$ of the perturbed polygon converge to $\IF{\W_1^p(\C_{q})}$ as the perturbation $\kappa_\epsilon$ goes to zero, i.e., $\bar{g}_{p,j}(\tilde{\C}_{q+1}) \to \bar{g}_{p,j}(\C_{q})$ for $\beta_1\to \mp \pi$, $\beta_2\to 0$, and $\beta_3\to \mp \pi$, where the sign is positive, if the new vertex $\xb_{k+1/2}$ is outside $C_q$, and negative if $\xb_{k+1/2}$ is inside.
\end{lemma}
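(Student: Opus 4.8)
The plan is to track how the data defining the irreducible surface MT of the perturbed polygon $\tilde{\C}_{q+1}$ — namely the segment lengths $l_i$ and the accumulated angles $f_i$ from \eqref{eq:integrated-components-polygon} — behave as the inserted triangle degenerates. First I would set up notation: the perturbed polygon has the same vertices as $\C_q$ except that the side $\gamma_k$ of length $l_k$ is replaced by two geodesic segments, from $\xb_k$ to $\xb_{k+1/2}$ of length $l_k'$ and from $\xb_{k+1/2}$ to $\xb_{k+1}$ of length $l_k''$, with turning angles at the three points governed by the Gauß--Bonnet relation $2\pi\pm\sum_i\beta_i = \kappa_\epsilon$ for the enclosed triangle $\triangle$. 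The turning angles of $\tilde{\C}_{q+1}$ at the old vertices $\xb_k,\xb_{k+1}$ differ from the original $\alpha_k,\alpha_{k+1}$ by the contributions $\mp\pi+\beta_1$ and $\mp\pi+\beta_3$ respectively (the exterior-angle bookkeeping at a vertex where one incoming edge is split), and the new vertex contributes turning angle $\mp(\pi-\beta_2)$, i.e. $\beta_2\to 0$ makes it a ``straight-through'' vertex.

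The key geometric facts to invoke are: (i) as $\kappa_\epsilon\to 0$ the triangle $\triangle$ collapses, so $\xb_{k+1/2}$ converges to a point on the original geodesic side $\gamma_k$ and $l_k'+l_k''\to l_k$, while $l_k',l_k''$ themselves converge to the two pieces of $l_k$ (and in the limit the new vertex can be absorbed, i.e. $l_k'$ may go to $0$ or $l_k$, but either way the pair $(l_k',l_k'')$ stays bounded with sum $\to l_k$); (ii) the turning-angle hypotheses $\beta_1\to\mp\pi$, $\beta_2\to 0$, $\beta_3\to\mp\pi$ force the turning angles of $\tilde{\C}_{q+1}$ at $\xb_k$ and $\xb_{k+1}$ to converge to $\alpha_k$ and $\alpha_{k+1}$, the new-vertex turning angle to converge to $0$, and hence $\sum_{j}\tilde\alpha_j \to \sum_{j}\alpha_j \ne 0$ (using $\int_\S K\,\mathrm{d}A<2\pi$ for the unperturbed polygon, which is why the normalizing denominator stays bounded away from zero and the $f_i$ are continuous in the angle data). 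Combining (i) and (ii): each $f_i$ for $i$ past the split uses the partial sum $\sum_{j<i}\tilde\alpha_j$, which converges to the corresponding partial sum for $\C_q$; the new segment's accumulated angle $f_{k+1/2}$ and the subsequent one $f_{k+1}$ converge to the same value (the original $f$ at $\gamma_k$), because $\beta_2\to 0$ means no angle is accrued at the straightened vertex. Plugging these limits into \eqref{eq:integrated-components-polygon}, the two split terms $l_k'\cos(pf_{k+1/2})+l_k''\cos(pf_{k+1})$ collapse to $l_k\cos(pf_k)$ (and likewise for $\sin$), and every other term converges to its $\C_q$ counterpart, giving $\bar g_{p,j}(\tilde\C_{q+1})\to\bar g_{p,j}(\C_q)$ for $j=1,2$; continuity of $\lambda_p^\pm$ and $\mu_p$ then follows since they are continuous functions of $(\bar g_{p,1},\bar g_{p,2})$ (and, for $\mu_p$, of the total length $L=\sum l_i$, which is also continuous).

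I would carry this out in the order: (1) fix notation and write the turning angles of $\tilde\C_{q+1}$ explicitly in terms of $\alpha_k,\alpha_{k+1},\beta_1,\beta_2,\beta_3$; (2) verify via Gauß--Bonnet on $\triangle$ and on $\tilde\S_{q+1}$ that the angle sum stays bounded away from $0$, so the maps $\beta\mapsto f_i$ are continuous and the $\kappa_0$ in the statement exists; (3) establish the convergence of lengths $l_k',l_k''$ and of the limiting position of $\xb_{k+1/2}$ on $\gamma_k$ from the collapse of $\triangle$; (4) pass to the limit termwise in \eqref{eq:integrated-components-polygon}, treating the two split summands together. The main obstacle I expect is step (3): one must argue that a geodesic triangle whose enclosed Gaussian curvature $\kappa_\epsilon$ tends to zero actually degenerates in the desired way — i.e. that under the stated angle limits the third vertex cannot ``escape'' but must approach the geodesic edge $\xb_k\xb_{k+1}$ and that the two new edge lengths stay controlled — which needs a short compactness/limiting argument using boundedness of $K$ and the fact that a geodesic triangle with two exterior angles near $\mp\pi$ and enclosed curvature near $0$ is forced to be a thin sliver along a single geodesic. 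Everything after that is the routine termwise limit already sketched, together with the observation that $\bar g_{p,j}$, $\lambda_p^\pm$, $\mu_p$ depend continuously on $(l_i, f_i)$.
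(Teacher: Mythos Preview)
Your approach is essentially the paper's: track how the turning angles of $\tilde{\C}_{q+1}$, and hence the accumulated $\tilde f_i$, relate to those of $\C_q$, then pass to the limit termwise in \eqref{eq:integrated-components-polygon}. The paper executes the angle part more compactly by introducing the single ratio $R_\epsilon \colonequals \sum_j \alpha_j / \sum_j \tilde\alpha_j \to 1$ and writing each $\tilde f_i$ explicitly as $R_\epsilon$ times $f_i$ plus a correction in $\beta_1,\beta_2,\kappa_\epsilon$, so the convergence $\tilde f_i \to f_{i}$ (with the two split segments both limiting to $f_k$) is immediate; you would arrive at the same relations but by your steps (1)--(2) rather than in one stroke. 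Your step (3) concern is well taken: the paper's proof in fact only establishes $\tilde f_i \to f_i$ and then asserts the convergence of $\bar g_{p,j}$ without addressing $l_k'+l_k'' \to l_k$, so on that point your plan is more careful than the published argument, not less.
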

\begin{proof}
    With $\sum_{j=1}^{q+1}\tilde{\alpha}_j = \sum_{j=1}^{q}{\alpha}_j \pm \kappa_\epsilon$
    we have
    \begin{align*}
        R_\epsilon &\colonequals \frac{\sum_{j=1}^{q}{\alpha}_j}{\sum_{j=1}^{q+1}\tilde{\alpha}_j} \to 1.
    \end{align*}
    The $\tilde{f}_i$ of the extended polygon $\tilde{\C}_{q+1}$ and the $f_i$ of $\C_q$ fulfill the relations
    \begin{align*}
        \tilde{f}_i & =R_\epsilon f_i, \quad (i=1,\ldots, k-1),\\
        \tilde{f}_k & =R_\epsilon \left( f_k + \frac{2\pi}{\sum_{j=1}^{q}{\alpha}_j} (\beta_1\pm \pi)\right),\\
        \tilde{f}_{k+1} & =R_\epsilon \left( f_k + \frac{2\pi}{\sum_{j=1}^{q}{\alpha}_j} (\beta_1\pm \pi+\beta_2)\right),\\
        \tilde{f}_{i} & =R_\epsilon \left( f_{i-1} \pm \kappa_{\epsilon}  \frac{2\pi}{\sum_{j=1}^{q}{\alpha}_j}\right),\quad (i=k+2,\ldots, q+1).
    \end{align*}
Thus, $\tilde{f}_i$ converges to $f_i$ from which the convergence of the $\bar{g}_{p,j}$ readily follows.
\end{proof}

\section{Convergence properties for discretized curves and surfaces}\label{sec:approximation}
In applications neither the curve $\C$ nor the surface $\M$ might be given exactly. We therefore analyze certain approximation schemes, and provide convergence rates for these. We start by approximating the smooth curve $\C$ by a sequence of geodesic polygons and show that the independent components of the irreducible surface MT converge linearly in the maximal segment length. Next we use an approximation with straight lines if the surface is embedded in $\R^3$ and show the same convergence rate as long as we calculate the angles in the tangent planes to the surface. Lastly,  we show that even a piecewise linear approximation of the surface $\M$ within this scheme still has linear convergence rates.

\subsection{Approximation of curves by geodesic polygons}\label{sec:geodesic-approximation}

\begin{wrapfigure}{R}{0.3\textwidth}
\centering
\vspace{-2em}
    \includegraphics[width=\linewidth]{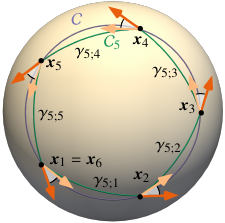}
    \caption{\label{fig:sketch-naming-geodesic-approximation}Illustration of the naming: $\dot \gamma_{p;i}(0)$ in light orange and $\dot \gamma_{p;i}(l_i)$ in dark orange.}
\vspace{-2em}
\end{wrapfigure}
Let $0 = s_1 < \cdots < s_{q+1} = L$ be a discretization of the parameter interval $\interval{0}{L}$ of the closed curve $\C$ and $\xb_i = \gamma(s_i)\in\M$ the associated points. We use a circular numbering, e.g., $\xb_1=\xb_{q+1}$, and denote by $\delta_i=s_{i+1}-s_{i}$ the segment lengths of the curve $\C$ and by $l_i=d_\M(\xb_{i}, \xb_{i+1})$, $i=1,\ldots,q$, the geodesic distances on $\M$ between neighboring points. Let $\C_q$ be the geodesic polygon with vertices $\xb_i$, total length $L_q=\sum_{i=1}^q l_i$, and unit-speed parametrized segments $\gamma_{q;i}\colon\interval{0}{l_i}\to\M$ with $\gamma_{q;i}(0)=\xb_{i}$ and $\gamma_{q;i}(l_i)=\xb_{i+1}$. We assume that $l_i$ is less than the injectivity radius of $\M$ at $\xb_{i}$ for all $i=1,\ldots,q$, so that we can write
\begin{align*}
    \gamma_{q;i}(t)=\exp_{\xb_{i}}\left(\frac{t}{l_i}\log_{\xb_{i}}\xb_{i+1}\right), \quad i=1,\ldots,q;\;t\in[0,l_i],
\end{align*}
with Riemannian $\exp$ and $\log$ associated to the manifold $\M$. For each of the turning angles $\alpha_i$ holds
\begin{align}
    \cos(\alpha_{i}) & = \langle \dot \gamma_{q;{i-1}}(l_{i-1}), \dot \gamma_{q;i}(0)\rangle = - \frac{\langle \log_{\xb_i} \xb_{i-1},\log_{\xb_i} \xb_{i+1} \rangle}{l_{i-1}\;l_{i}} \label{eq:riemannian-log-geodesic-angle}
\end{align}
with the sign of the angles prescribed by the orientation, i.e., a positive angle $\alpha_i$ if the segment bends towards the inside and a negative angle $\alpha_i$ if it bends towards the outside of the shape.

Assuming that the polygon $C_q$ fulfills the requirements introduced in \Cref{sec:geometric-setup}, we construct the irreducible surface MT as in \Cref{sec:geodesic-polygons}.
Without loss of generality, we enumerate our segments so that the first one starts at $\gamma(t=0)$, and we choose the fiducial point to be at $t=0$. Note that, as above, we have $f_1=0$ and do not have the $(q+1)$th side in this case.
Note that the independent components, $\overline{g}_{p,1}$ and $\overline{g}_{p,2}$, are fully determined by the angles, $\alpha_i$, and the local distances, $l_i$. Thus, the quality of the approximation depends on how well the curvature and segment lengths are approximated by the angles and local distances, respectively.

\begin{remark}[Length approximation]\label{rem:length_consistency}
    For the difference of the local lengths holds $\delta_i\geq l_i$, and for $l_i$ small enough
    \begin{align*}
        \delta_i-l_i \leq C \delta_i^3,
    \end{align*}
    where the constant depends on the maximal curvature of $\gamma|_{[s_i,s_{i+1}]}$.
    This also implies
    \begin{align*}
        L-L_q \leq C L \delta^2.
    \end{align*}
\end{remark}

\begin{lemma}[Angle approximation]\label{lem:geodesic_angles}
    Let $\gamma:[0,L]\to M$ be of class $C^3$, and let $0=s_1<s_2<\cdots<s_{q+1}=L$ be a partition with $\delta=\max_{i=1,\ldots,q}\delta_i$ small enough. Let $\gamma_q$ be the closed geodesic polygon with corners $\gamma(s_i)$, $i=1,\ldots,q$, and turning angles $\alpha_i$. Then for all $i=1,\ldots, q+1$, we have
        \begin{align*}
            \alpha_i = \frac{1}{2}k_g(s_i)(\delta_{i}+\delta_{i-1}) + O({\max\{|\delta_i|,|\delta_{i-1}|\}}^2).
        \end{align*}
\end{lemma}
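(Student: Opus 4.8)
The plan is to localize at each corner $\gamma(s_i)$ and expand the Riemannian logarithm of $\gamma$ there to second order. First I would fix an index $i$ and introduce the curve $v(h)\colonequals\log_{\xb_i}\gamma(s_i+h)\in T_{\xb_i}\M$, which is $C^3$ and well defined for $|h|$ small since $\delta$ is below the injectivity radius of $\M$ along the compact curve $\gamma$. Passing to Riemannian normal coordinates centered at $\xb_i$, where the Christoffel symbols vanish at the origin, the quadratic Taylor coefficient of $v$ is the intrinsic covariant acceleration $\nabla_{\tangent}\tangent(s_i)$; by the Gauss formula for the isometric embedding this is the tangential part of $\ddot\gamma(s_i)$, and since $\norm{\tangent}\equiv1$ forces $\ddot\gamma\cdot\tangent=0$, it equals $(\ddot\gamma(s_i)\cdot\conormal(s_i))\conormal(s_i)=-k_g(s_i)\conormal(s_i)$. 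Hence
\[
  \log_{\xb_i}\gamma(s_i+h)=h\,\tangent(s_i)-\tfrac{h^{2}}{2}k_g(s_i)\conormal(s_i)+O(h^{3}),
\]
with an $O(h^3)$ remainder that is uniform in $i$ by Taylor's theorem and compactness.

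Next I would evaluate this at $h=\delta_i$ and $h=-\delta_{i-1}$ to get $\log_{\xb_i}\xb_{i+1}$ and $\log_{\xb_i}\xb_{i-1}$; taking norms gives $l_i=\delta_i+O(\delta_i^3)$, consistent with \Cref{rem:length_consistency}. Dividing by these lengths, the incoming and outgoing unit tangents of the polygon at $\xb_i$, namely $\tangent^{\mathrm{in}}_i=-\log_{\xb_i}\xb_{i-1}/l_{i-1}=\dot\gamma_{q;i-1}(l_{i-1})$ and $\tangent^{\mathrm{out}}_i=\log_{\xb_i}\xb_{i+1}/l_i=\dot\gamma_{q;i}(0)$ (cf.\ \eqref{eq:riemannian-log-geodesic-angle}), expand as
\[
  \tangent^{\mathrm{in}}_i=\tangent(s_i)+\tfrac{\delta_{i-1}}{2}k_g(s_i)\conormal(s_i)+O(\delta_{i-1}^{2}),\qquad
  \tangent^{\mathrm{out}}_i=\tangent(s_i)-\tfrac{\delta_{i}}{2}k_g(s_i)\conormal(s_i)+O(\delta_{i}^{2}).
\]
The turning angle $\alpha_i$ is the signed angle from $\tangent^{\mathrm{in}}_i$ to $\tangent^{\mathrm{out}}_i$ in the plane $T_{\xb_i}\M$ oriented so that the orthonormal basis $(\conormal(s_i),\tangent(s_i))$ is positive, as dictated by the right-handed frame $(\conormal,\tangent,\nb)$. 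In that basis the two vectors have polar angles $\tfrac{\pi}{2}-\tfrac{\delta_{i-1}}{2}k_g(s_i)+O(\delta_{i-1}^2)$ and $\tfrac{\pi}{2}+\tfrac{\delta_i}{2}k_g(s_i)+O(\delta_i^2)$, and subtracting yields
\[
  \alpha_i=\tfrac12 k_g(s_i)\bigl(\delta_i+\delta_{i-1}\bigr)+O\bigl(\max\{|\delta_i|,|\delta_{i-1}|\}^{2}\bigr).
\]
To close the argument I would verify on the flat model (outer co-normal $\conormal$, interior of $\S$ to the left of $\tangent$) that positive $k_g$ corresponds exactly to a positive, inward turn, so that the signed angle just computed is the $\alpha_i$ of \eqref{eq:riemannian-log-geodesic-angle}; the index range $i=1,\ldots,q+1$ then follows from the circular numbering.

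The main obstacle is the clean second-order expansion of the logarithm — identifying $\nabla_{\tangent}\tangent$ as its quadratic coefficient through normal coordinates and checking that the $O(h^3)$ remainder is uniform over the corners (this is where $\gamma\in C^3$ enters); the rest is elementary bookkeeping, with the sign/orientation tracking the only thing to do carefully. As a fallback I would instead argue $\alpha_i\approx\tfrac12\int_{s_{i-1}}^{s_{i+1}}k_g\ds$ — half the geodesic curvature accumulated over the two adjacent arcs, lumped at the vertex — and estimate the discrepancy between those arcs and the geodesic chords they subtend via Gau\ss--Bonnet on the two thin lune-shaped regions enclosed between them; this is the same estimate in disguise and I would resort to it only if the normal-coordinate computation turned out to be awkward to write down rigorously.
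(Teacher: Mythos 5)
Your argument is correct and is essentially the paper's own proof, which consists of the single remark that the result follows from a Taylor expansion of the angle at $s_i$ as a function of $s_{i-1}$ and $s_{i+1}$; your normal-coordinate expansion of $\log_{\xb_i}\gamma(s_i\pm h)$, identification of the quadratic coefficient with $\nabla_{\tangent}\tangent(s_i)=-k_g(s_i)\conormal(s_i)$, and the signed-angle bookkeeping in the oriented basis $(\conormal,\tangent)$ are exactly the details that remark leaves implicit. No gap; the only point to keep, as you note, is the uniformity of the $O(h^3)$ remainder from $\gamma\in C^3$ and the orientation check that positive $k_g$ gives a positive (inward) turn.
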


\begin{proof}
    This is the result of a Taylor expansion of the angle at $s_i$ as a function depending on $s_{i-1}$ and $s_{i+1}$.
\end{proof}

\begin{remark}\label{rem:geodesic_angles}
    If $\C$ is only of class $C^2$, the angles still converge to the curvature, but the convergence order will no longer be quadratic.
    If $\C$ is of class $C^4$ and the partition is equidistant, then
        \begin{align*}
            \alpha_i = \frac{1}{2}k_g(s_i)(\delta_{i}+\delta_{i-1}) + O({\max\{|\delta_i|,|\delta_{i-1}|\}}^3)
        \end{align*}
    holds for all $i$.
\end{remark}

\begin{lemma}[Accumulated curvature]\label{lem:geodesic_approx}
    Let $\gamma:[0,L]\to M$ be of class $C^3$, and let $0=s_1<s_2<\cdots<s_{q+1}=L$ be a partition with $\delta=\max_{i=1,\ldots,q}\delta_i$ small enough. Let $\gamma_q$ be the closed geodesic polygon with corners $\gamma(s_i)$, $i=1,\ldots,q$, and turning angles $\alpha_i$. Then there exists a constant $C$ such that for all $i=1,\ldots, q+1$, we have
    \begin{align*}
        \left|\int_{0}^{s_i}k_g(\tau)\dtau - \sum_{j=1}^{i-1}\alpha_j\right| \leq C\delta \left(1+\frac{1}{2}\left(k_g(0)+k_g(s_{i})\right) + L\right).
    \end{align*}
    If $\gamma$ is of class $C^4$ and the partition is equidistant, we obtain for the total curvature
    \begin{align*}
        \left|\int_{\gamma}k_g(\tau)\dtau - \sum_{j=1}^{q}\alpha_j\right| \leq C\delta^2.
    \end{align*}
\end{lemma}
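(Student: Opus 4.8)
The plan is to insert the vertex-wise estimate of \Cref{lem:geodesic_angles} into the partial sum $\sum_{j=1}^{i-1}\alpha_j$ and to recognize the leading term as a quadrature rule for $\int_0^{s_i}k_g$. Write $\alpha_j=\tfrac12 k_g(s_j)(\delta_j+\delta_{j-1})+r_j$ with $|r_j|\le C\max\{\delta_j,\delta_{j-1}\}^2\le C(\delta_j^2+\delta_{j-1}^2)$. Since there are at most $q$ such remainders and $\sum_j\delta_j\le L$, one gets $\sum_{j=1}^{i-1}|r_j|\le C\delta\sum_j\delta_j\le C\delta L$, which produces the $L$-contribution in the claimed bound. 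It then remains to compare $\Sigma_i\colonequals\sum_{j=1}^{i-1}\tfrac12 k_g(s_j)(\delta_j+\delta_{j-1})$ with $\int_0^{s_i}k_g$.

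Reindexing the $\delta_{j-1}$-part of $\Sigma_i$ (shifting $j\mapsto j+1$) rewrites it as the composite trapezoidal sum $\sum_{j=1}^{i-2}\tfrac{\delta_j}{2}\bigl(k_g(s_j)+k_g(s_{j+1})\bigr)$ for $\int_0^{s_{i-1}}k_g$ plus the two boundary terms $\tfrac12 k_g(s_1)\delta_0$ and $\tfrac12 k_g(s_{i-1})\delta_{i-1}$. Since $\gamma\in C^3$ implies $k_g=-\ddot\gamma\cdot\conormal\in C^1$, an integration by parts on each subinterval (against the zero-mean antiderivative $\tau\mapsto\tau-\tfrac{s_j+s_{j+1}}{2}$ of $1$) shows that this trapezoidal sum differs from $\int_0^{s_{i-1}}k_g$ by at most $\tfrac14\norm{k_g'}_\infty\delta\sum_{j\le i-2}\delta_j\le\tfrac14\norm{k_g'}_\infty\delta L$. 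The remaining piece $\int_{s_{i-1}}^{s_i}k_g$ together with the two boundary terms is $O(\delta)$, and using $s_1=0$, $k_g(s_{i-1})=k_g(s_i)+O(\delta)$ and $\delta_0,\delta_{i-1}\le\delta$ it collapses to $\tfrac12\bigl(|k_g(0)|+|k_g(s_i)|\bigr)\delta$ up to an $O(\delta^2)$ term, the latter absorbed into $C\delta\cdot 1$. Collecting the three error sources gives the first estimate, with $C$ depending only on the constant of \Cref{lem:geodesic_angles}, $\norm{k_g'}_\infty$ and $\norm{k_g}_\infty$.

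For the total curvature one takes $i=q+1$. Then the reindexing of $\Sigma_{q+1}$ closes up around the loop, the two boundary defects cancel (using the periodicity $k_g(s_{q+1})=k_g(s_1)$), and $\Sigma_{q+1}$ becomes the full composite trapezoidal sum $\sum_{j=1}^{q}\tfrac{\delta_j}{2}\bigl(k_g(s_j)+k_g(s_{j+1})\bigr)$, i.e.\ the periodic trapezoidal rule. If the partition is equidistant, $\delta_j\equiv\delta$ and \Cref{rem:geodesic_angles} improves each remainder to $|r_j|\le C\delta^3$, so $\sum_{j=1}^q|r_j|\le Cq\delta^3=CL\delta^2$; moreover $\gamma\in C^4$ gives $k_g\in C^2$, so the per-subinterval trapezoidal error is $-\tfrac{\delta^3}{12}k_g''(\xi_j)$ and sums to at most $\tfrac{1}{12}\norm{k_g''}_\infty L\delta^2$. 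Both are $O(\delta^2)$, which is the second estimate.

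I expect the main obstacle to be the bookkeeping of the boundary terms: one must see precisely that the index shift in $\sum_j k_g(s_j)(\delta_j+\delta_{j-1})$ leaves exactly the two endpoint contributions responsible for the $\tfrac12(k_g(0)+k_g(s_i))$ factor, and that these cancel only when the summation runs over the whole closed polygon. A secondary, more technical point is that under the weaker hypothesis $\gamma\in C^3$ only $k_g\in C^1$ is available, which forces the $C^1$ (integration-by-parts) form of the trapezoidal remainder in place of the usual $C^2$ estimate — this is exactly why the partial sums converge at rate $O(\delta)$, while the closed-loop total curvature, where the endpoint defects vanish and $C^2$-regularity is available, converges at rate $O(\delta^2)$.
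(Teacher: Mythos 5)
Your argument is correct, and it is more self-contained than the paper's: for the first estimate the paper simply cites \cite{LopezEtAl2010Total} (Thm.~3.4, stated there for $C^2$ curves) and remarks that, since that proof rests on a box-rule quadrature plus the local angle expansion of \Cref{lem:geodesic_angles}, the $O(\delta)$ rate for $C^3$ curves follows; you instead reconstruct the whole estimate directly, splitting $\alpha_j$ into the leading term $\tfrac12 k_g(s_j)(\delta_j+\delta_{j-1})$ plus an $O(\delta^2)$ remainder, reindexing into a composite trapezoidal sum, and bounding its defect via the $C^1$ (integration-by-parts) form of the trapezoidal remainder. This has the virtue of exhibiting exactly where the $\tfrac12(k_g(0)+k_g(s_i))$ endpoint terms in the stated bound come from, something the paper leaves implicit in the citation. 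For the second estimate the paper centers the quadrature at the vertices, writing $\tfrac12 k_g(s_i)(\delta_{i-1}+\delta_i)=\tfrac12\int_{s_{i-1}}^{s_{i+1}}k_g\dtau + \tfrac16 k_g''(\xi)\delta^3$ (a midpoint rule on the doubled intervals, which cover the closed curve exactly twice) and sums, while you close up the loop so that the endpoint defects cancel by periodicity and invoke the periodic composite trapezoidal rule with its $C^2$ error; both give $q\cdot O(\delta^3)=O(\delta^2)$ and are equivalent in substance. Two cosmetic points: you should state the circular conventions $\delta_0=\delta_q$ and $k_g(s_{q+1})=k_g(s_1)$ explicitly (the paper's $\delta_i$ are only defined for $i=1,\dots,q$), and your bound carries $|k_g(0)|,|k_g(s_i)|$ where the lemma writes the values without absolute values — your version is the safe one and consistent with the intended meaning.
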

\begin{proof}
    The result is stated in~\cite[Thm 3.4]{LopezEtAl2010Total} for $\C$ only in $C^2$ for some $\epsilon$ instead of $C\delta$. As the proof is based on the approximation of integrals by a box rule and the local approximation of the angles (see \Cref{lem:geodesic_angles}), the linear order of convergence for curves in $C^3$ can readily be concluded.

    Using an equidistant partition, i.e., $\delta_i=\delta$ for all $i$, and the improved local approximation (see \Cref{rem:geodesic_angles}) for curves in $C^4$ we see that by a midpoint quadrature rule
    \begin{align*}
        \frac{1}{2}k_g(s_i)(\delta_{i}+\delta_{i-1}) =
        \frac{1}{2}\int_{s_{i-1}}^{s_{i+1}}k_g(\tau)\dtau + \frac{1}{6}k_g''(\xi) \delta^3.
    \end{align*}
    Thus, we have
    \begin{align*}
        \alpha_i = \frac{1}{2}\int_{s_{i-1}}^{s_{i+1}}k_g(\tau)\dtau + O(\delta^3).
    \end{align*}
    Summing up, we get for the total curvature (as $q\delta=L$)
    \begin{align*}
        \sum_{i=0}^q \alpha_i &= \int_{\gamma}k_g(\tau)\dtau + O(\delta^2).\qedhere
    \end{align*}
\end{proof}

A direct consequence of this Lemma is that the sum of the angles of the approximating polygon can be arbitrarily close to the total geodesic curvature $\int_\C k_g\ds = 2\pi - \int_\S K \dA$ of $\C$. Thus, the irreducible surface MT are defined for $\C_q$ if $\delta<\delta_0$ small enough depending on $\int_\C k_g\ds$. However, the constants in the approximation error estimate will still depend on the inverse of the total curvature as stated in the following Corollary. In general, approximation results will fail to hold for the limit case.

\begin{corollary}\label{cor:f-fh}
     Let $s\in\rinterval{s_{i}}{s_{i+1}}$ and $\delta=\max_{i=1,\ldots,q}\delta_i$ be small enough. Then we have for $i\geq 1$
    \begin{align*}
        |f(s,0) - f_i| & \leq C\delta,
    \end{align*}
    where the constant now depends on the inverse of the total curvature of $\gamma$, the curvature at $s_{i+1}$, and, for non-convex curves, on the maximum curvature of $\gamma$ at the nodes $s_j$, $j\leq i$.
\end{corollary}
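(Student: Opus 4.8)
The plan is to compare $f(s,0)$ with $f_i$ in two stages: first pass from the continuous parameter $s\in[s_i,s_{i+1})$ to the node $s_i$, and then compare the two accumulated quantities evaluated at $s_i$, absorbing separately the mismatch between the two normalizing constants $m\colonequals 2\pi/\int_{\C}k_g\ds$ and $m_q\colonequals 2\pi/\sum_{j=1}^q\alpha_j$. For the first stage, writing $f(s,0)=m\int_0^s k_g(\tau)\dtau$ gives
\[
  |f(s,0)-f(s_i,0)| = m\Bigl|\int_{s_i}^{s}k_g(\tau)\dtau\Bigr|\le m\,\delta\max_{[s_i,s_{i+1}]}|k_g|,
\]
which is $O(\delta)$ with a constant controlled by $1/\int_{\C}k_g\ds$ and by the size of $k_g$ near $s_{i+1}$. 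For $i=1$ this already finishes, since $f(s_1,0)=0=f_1$.

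For the second stage, set $I\colonequals\int_{\C}k_g\ds$ and $I_q\colonequals\sum_{j=1}^q\alpha_j$. By \Cref{lem:geodesic_approx} applied with $i=q+1$ we have $|I-I_q|\le C\delta$; since $I>0$ by the standing assumption $\int_{\S}K\dA<2\pi$, for $\delta$ small enough $I_q\ge I/2>0$ — which is exactly the condition that makes the surface MT of $\C_q$ well defined — and hence
\[
  |m-m_q| = 2\pi\frac{|I-I_q|}{I\,I_q}\le \frac{4\pi C}{I^2}\,\delta .
\]
I would then split
\[
  f(s_i,0)-f_i = (m-m_q)\int_0^{s_i}k_g(\tau)\dtau + m_q\Bigl(\int_0^{s_i}k_g(\tau)\dtau-\sum_{j=1}^{i-1}\alpha_j\Bigr),
\]
bound the first term by $|m-m_q|\,L\max|k_g|=O(\delta)$, and the second by $m_q$ times the accumulated‑curvature estimate of \Cref{lem:geodesic_approx}, i.e.\ $m_q\,C\delta\bigl(1+\tfrac12(k_g(0)+k_g(s_i))+L\bigr)=O(\delta)$. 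Combining the two stages by the triangle inequality yields $|f(s,0)-f_i|\le C\delta$ with the stated dependence of the constant.

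The main obstacle is the bookkeeping of the constant rather than any new idea: one must verify that $I_q$ stays uniformly bounded below for all small $\delta$ (so that $m_q$ and $1/I_q$ do not blow up), which is what produces the dependence on $1/\int_{\C}k_g\ds$, and one must keep the nodal‑curvature dependence explicit. The latter is inherited from \Cref{lem:geodesic_approx} and \Cref{lem:geodesic_angles}: the $O(\delta^2)$ remainders in the local angle expansion carry constants depending on derivatives of $k_g$ near the nodes, and for non‑convex curves the signs of the $\alpha_j$ do not telescope, so the sum $\sum_{j\le i}|\alpha_j|$ must be controlled by $\delta\max_{j\le i}|k_g(s_j)|$ up to higher‑order terms rather than by the total curvature. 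Everything else is the routine triangle‑inequality argument built on the already‑established estimates.
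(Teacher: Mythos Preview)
Your argument is correct and follows essentially the same route as the paper: reduce to $s=s_i$ by the trivial $O(\delta)$ bound on $f(s,0)-f(s_i,0)$, then split $f(s_i,0)-f_i$ into a normalization-mismatch term and an accumulated-curvature term, both controlled via \Cref{lem:geodesic_approx}. The only cosmetic difference is that the paper adds and subtracts $m\sum_{j<i}\alpha_j$ rather than your $m_q\int_0^{s_i}k_g$, so it must bound the partial sum $\bigl|\sum_{j<i}\alpha_j\bigr|$ directly---which is where the explicit nodal-curvature dependence for non-convex curves enters---whereas your version naturally produces a continuous $\max|k_g|$ bound instead (making your closing remark about controlling $\sum_{j\le i}|\alpha_j|$ unnecessary for your own decomposition).
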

\begin{proof}
    It is enough to show this for $s=s_i$, as the difference $f(s,0)-f(s_i,0)$ is obviously of the correct order.
    \begin{align*}
        |f(s_i,0) - f_i| & = \frac{2\pi}{|\int_{\gamma}k_g(\tau) \dtau|}\left|\int_{0}^{s_i}k_g(\tau)\dtau - \sum_{j=1}^{i-1}\alpha_j +\left(1- \frac{\int_{\gamma}k_g(\tau) \dtau}{\sum_{j=1}^q\alpha_j}\right)\sum_{j=1}^{i-1}\alpha_j \right|\\
        &\leq \frac{2\pi}{|\int_{\gamma}k_g(\tau) \dtau|}\left|\int_{0}^{s_i}k_g(\tau)\dtau - \sum_{j=1}^{i-1}\alpha_j\right|\\
        &\quad +\frac{2\pi}{|\int_{\gamma}k_g(\tau) \dtau|}\left|\sum_{j=1}^q\alpha_j- \int_{\gamma}k_g(\tau) \dtau \right|\left|\frac{\sum_{j=1}^{i-1}\alpha_j}{\sum_{j=1}^{q}\alpha_j}\right|.
    \end{align*}
For convex curves, the assertion follows directly from \Cref{lem:geodesic_approx} as the last quotient is bounded by one. For non-convex curves, we estimate
\begin{align*}
    \sum_{j=1}^{q}\alpha_j&\geq \int_{\gamma}k_g(\tau)\dtau- C\delta > 0\\
    \left|\sum_{j=1}^{i-1}\alpha_j\right|&\leq \sum_{j=1}^{i-1}|\alpha_j|\leq L\max_{j=1,\ldots,i}|k_g(s_j)| + C\delta L. \qedhere
\end{align*}
\end{proof}

\begin{corollary}\label{cor:g-gh}
    Let $\gamma:[0,L]\to M$ be of class $C^3$, and let $0=s_1<s_2<\cdots<s_{q+1}=L$ be a partition with $\delta=\max_{i=1,\ldots,q}\delta_i$ small enough. Let $\gamma_q$ be the closed geodesic polygon with corners $\gamma(s_i)$, $i=1,\ldots,q$.
    Then
    \begin{align*}
        \left|\overline{g}_{p,j;\gamma}(0)-\overline{g}_{p,j;\gamma_q}\right| & \leq C\delta,\qquad\text{ for }j=1,2.
    \end{align*}
\end{corollary}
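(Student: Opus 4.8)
The plan is to split the integral defining $\overline{g}_{p,j;\gamma}(0)$ over the partition $0=s_1<\dots<s_{q+1}=L$ and compare it termwise with the polygonal sum. Writing out the definitions $\overline{g}_{p,1;\gamma}(0)=\int_0^L\cos(pf(s,0))\ds$ and $\overline{g}_{p,1;\gamma_q}=\sum_{i=1}^q l_i\cos(pf_i)$ (and analogously with $\sin$ for $j=2$), and using $\int_{s_i}^{s_{i+1}}\ds=\delta_i$, I would first rewrite
\begin{align*}
\overline{g}_{p,1;\gamma}(0)-\overline{g}_{p,1;\gamma_q}
&=\sum_{i=1}^{q}\int_{s_i}^{s_{i+1}}\bigl(\cos(pf(s,0))-\cos(pf_i)\bigr)\ds
\;+\;\sum_{i=1}^{q}(\delta_i-l_i)\cos(pf_i).
\end{align*}

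For the first sum, on each segment $s\in[s_i,s_{i+1})$ \Cref{cor:f-fh} gives $|f(s,0)-f_i|\le C\delta$, so by the $1$-Lipschitz bound on $\cos$ we get $|\cos(pf(s,0))-\cos(pf_i)|\le p\,|f(s,0)-f_i|\le pC\delta$; summing the segment contributions yields at most $pC\delta\sum_i\delta_i=pC\,L\,\delta=O(\delta)$. For the second sum, \Cref{rem:length_consistency} gives $0\le\delta_i-l_i\le C\delta_i^3\le C\delta^2\delta_i$, hence $\bigl|\sum_i(\delta_i-l_i)\cos(pf_i)\bigr|\le C\delta^2\sum_i\delta_i=O(\delta^2)$, which is of lower order. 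Adding the two pieces, and repeating the identical argument for $j=2$, gives the asserted bound $|\overline{g}_{p,j;\gamma}(0)-\overline{g}_{p,j;\gamma_q}|\le C\delta$, with the constant inheriting the dependencies of \Cref{cor:f-fh} (the inverse of the total curvature of $\gamma$, and for non-convex curves the maximum of $|k_g|$ at the nodes), together with a factor $p$ and the length $L$.

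The estimate itself is routine once \Cref{cor:f-fh} and \Cref{rem:length_consistency} are in hand, so the points that need care are the implicit hypotheses rather than the inequality: one must take $\delta<\delta_0$ small enough that $\C_q$ satisfies the standing requirements of \Cref{sec:geometric-setup} — in particular $\sum_{j=1}^q\alpha_j\neq 0$, so that the $f_i$ and hence $\overline{g}_{p,j;\gamma_q}$ are defined — which is ensured by \Cref{lem:geodesic_approx}, since $\sum_j\alpha_j$ stays within $O(\delta)$ of $\int_\C k_g\ds=2\pi-\int_\S K\dA\neq0$. One should also record that the fiducial point $t=0=s_1$ is common to both $\gamma$ and $\gamma_q$, with $f(0,0)=0=f_1$, which is exactly what makes the termwise comparison legitimate; no separate point-independence argument is needed here.
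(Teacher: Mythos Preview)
Your proof is correct and follows essentially the same approach as the paper: both arguments reduce the estimate to \Cref{cor:f-fh} for the angle error and \Cref{rem:length_consistency} for the length error. Your two-term decomposition is marginally more streamlined than the paper's three-term split (the paper first isolates a quadrature error $\int_0^L\cos(pf(s,0))\ds-\sum_i\delta_i\cos(pf(s_i,0))$ and then applies \Cref{cor:f-fh} only at the nodes $s_i$), but since \Cref{cor:f-fh} is already stated for all $s\in[s_i,s_{i+1})$, your direct use of it on the whole segment is legitimate and slightly cleaner.
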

\begin{proof}
    We have
    \begin{align*}
        \left|\overline{g}_{p,1;\gamma}(0)- \overline{g}_{p,1;\gamma_q}\right|
        & = \left|\int_{0}^{L}\cos(pf(s,0))\;ds -\sum_{i=1}^{q}l_i \cos(p f_i)\right|\\
        & \leq  \left|\int_{0}^{L}\cos(pf(s,0))\;ds-\sum_{i=1}^{q}\delta_i\cos(p f(s_i,0))\right|\\
        & + p\sum_{i=1}^{q}\delta_i |f(s_i,0)-f_i|+  \sum_{i=1}^{q}\left|\delta_i-l_i\right|.
    \end{align*}
    Now, the first term is again approximation of an integral by a quadrature rule. The second term can be estimated by $C\delta$ by the previous \Cref{cor:f-fh}. The last term compares the distances, and can also be estimated by $L-L_q \leq C\delta^2$.
\end{proof}

\subsection{Approximation of curves by straight line polygons}\label{sec:straight-line-approximation}
\begin{wrapfigure}{R}{0.3\textwidth}
\vspace{-2em}
\centering
    \includegraphics[width=\linewidth]{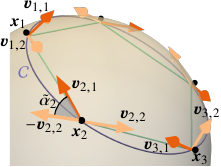}
    \caption{\label{fig:sketch-naming-line-approximation}Illustration of the naming: visualization of $\xb_i-\xb_{i+1}$ in green.}
\vspace{-2em}
\end{wrapfigure}
In the following, we assume that $\M$ is isometrically embedded in $\R^3$ and provides an outer surface normal field $\nb$.

While geodesic polygons allow for a convergent approximation of a smooth curve, the Riemannian $\exp$ and $\log$ are not always readily available, and can be expensive to compute. We therefore consider an even simpler approximation of the curve, by a chain of straight line segments in the embedding space $\R^3$.
In order to construct and evaluate such an approximation, we start with a geodesic polygon $\C_q$ that approximates the smooth curve $\C$ as in \Cref{sec:geodesic-approximation} and construct a corresponding piecewise linear approximation.
As the independent components consist of sums of angles weighted by local distances, almost all consistency errors, i.e., the differences of local distances and of local angles, have to be better than linear in order to obtain convergence. For linear convergence, the consistency errors have to be quadratic. This is usually the case for the local distances, as long as the curvature of all curves involved stays bounded (cf. \Cref{rem:length_consistency}). In order to approximate the angles, which in turn approximate the local curvature (cf. \Cref{lem:angle-approximation}), some information about the underlying surface is needed. This can be provided by the surface normals $\nb$ at the vertices of the polygon, or by approximations thereof.

One possible approximation is the following:
Let $\nb_i$ denote the unit outer normal to $\M$ at the vertices $\xb_i$, $i=1,\ldots,q$. We introduce the tangential vectors
\begin{gather}\label{eq:projected-difference}
\begin{aligned}
    \vb_{i,1} & \colonequals P_{\nb_i}(\xb_{i-1}-\xb_i) = (\xb_{i-1}-\xb_i) - \langle \xb_{i-1}-\xb_i,\nb_i\rangle \nb_i, \\
    \vb_{i,2} & \colonequals P_{\nb_i}(\xb_{i+1}-\xb_i) = (\xb_{i+1}-\xb_i) - \langle \xb_{i+1}-\xb_i, \nb_i\rangle \nb_i
\end{aligned}
\end{gather}
as projections of the vector connecting two neighboring points in the embedding space $\R^3$. We assume further that the distance of the $\xb_i$ is small enough, such that $\langle \xb_{i-1}-\xb_i,\nb_i\rangle^2 \leq \frac{1}{2}\norm{\xb_{i-1}-\xb_i}^2$. Then $\norm{P_{\nb_i}(\xb_{i-1}-\xb_i)}>0$,
and we can set $\tilde{l}_i \colonequals \norm{\xb_i - \xb_{i-1}}$ as approximations of the local distances, and
\begin{align}
    \cos(\tilde{\alpha}_i) &\colonequals - \frac{\langle \vb_{i,1}, \vb_{i,2}\rangle}{\norm{\vb_{i,1}}\;\norm{\vb_{i,2}}}
\end{align}
as an approximation of the angles in the geodesic polygon, with sign as before. For the following construction, we assume that $\sum_{j=1}^q\tilde{\alpha}_j>0$ similar to the requirements for geodesic polygons. As we will show in the following that the $\tilde{\alpha}_i$ approximate the angles $\alpha_i$ of the geodesic polygon well enough, this can again be guaranteed by choosing $\delta$ small enough.
This allows to introduce approximations of the $f_i$ as $\tilde{f}_1=0$ and
\begin{align}
    \tilde{f}_i &\colonequals \frac{2\pi}{\sum_{j=1}^q\tilde{\alpha}_j}\sum_{j=1}^{i-1}\tilde{\alpha}_j, \quad i=2,\ldots, q
\end{align}
and corresponding approximations of the independent components of the irreducible surface MT $\IF{\W_1^p}$,
\begin{align}
    \tilde{g}_{p,1}(t) &\colonequals \sum_{i=1}^{q}\tilde{l}_i\cos(p \tilde{f}_i),&
    \tilde{g}_{p,2}(t) &\colonequals \sum_{i=1}^{q}\tilde{l}_i\sin(p \tilde{f}_i).
\end{align}

As we see, the angle approximation is usually not given directly, but as an approximation of the cosine via a vector approximation.
The following lemma allows us to estimate the angle consistency error in terms of the vector perturbation.

\begin{lemma}\label{lem:angle_vector_pertubation}
Let $\alpha$ be the angle between two unit vectors $V,W\in T_{\xb} \M \subset \R^3$, and let $V_{\delta}\in \R^3$ be a unit vector with $\norm{V-V_{\delta}}\leq \delta$ for some small $\delta>0$. Let $\alpha_{\delta}$ be the angle between $V_{\delta}$ and $W$. Then
    \begin{align*}
        |\alpha-\alpha_{\delta}|\leq
        \delta+ C\delta^3.
    \end{align*}
\end{lemma}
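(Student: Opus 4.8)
The plan is to reduce the statement to elementary trigonometry of the angles that the three vectors $V$, $V_\delta$, and $W$ make within a suitable plane, and to control the out-of-plane component of $V_\delta$ by the perturbation size $\delta$. Write $\alpha = \angle(V,W)$ and $\alpha_\delta = \angle(V_\delta,W)$, both taken in $[0,\pi]$. The chordal distance and the angle between unit vectors are related by $\norm{V-V_\delta} = 2\sin(\tfrac12\angle(V,V_\delta))$, so the hypothesis $\norm{V-V_\delta}\le\delta$ gives $\angle(V,V_\delta)\le 2\arcsin(\delta/2) = \delta + O(\delta^3)$. Hence the first step is simply: bound the angle between $V$ and $V_\delta$ by $\delta + C\delta^3$ using the arcsine expansion.

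The second step uses the triangle inequality for the angular (geodesic) metric on the unit sphere $S^2\subset\R^3$. The function $U\mapsto\angle(U,W)$ is exactly the distance to the fixed point $W$ in this metric, so it is $1$-Lipschitz: $|\angle(V,W) - \angle(V_\delta,W)| \le \angle(V,V_\delta)$. Combining with the bound from the first step yields $|\alpha - \alpha_\delta| \le \angle(V,V_\delta) \le \delta + C\delta^3$, which is precisely the claim. I would state the spherical triangle inequality as the one nontrivial geometric input; it is standard (the angular metric is a genuine metric on $S^2$, being the restriction of the round metric), so no real obstacle arises there.

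One subtlety worth addressing explicitly in the write-up: in the application the relevant vectors come with a prescribed sign (orientation), so what is actually compared is not the unsigned angle in $[0,\pi]$ but a signed angle, and the ``angle'' $\alpha_\delta$ is defined through $\cos\alpha_\delta = -\langle\vb_{i,1},\vb_{i,2}\rangle/(\norm{\vb_{i,1}}\norm{\vb_{i,2}})$, i.e.\ as an unsigned angle that is then given a sign by the orientation of the surface. Since the perturbation is small, $V_\delta$ stays on the same side, so the signed and unsigned statements agree for $\delta$ small; I would note this and otherwise work with the unsigned angles, where the Lipschitz argument is cleanest. The main obstacle, such as it is, is purely bookkeeping: making sure the $O(\delta^3)$ constant in the $\arcsin$ expansion is absorbed into the claimed $C\delta^3$ uniformly, which follows since $2\arcsin(x/2) \le x + x^3/4$ for $x\in[0,1]$ (a direct consequence of the Taylor series with positive remainder), so one may take $C = 1/4$ for $\delta\le 1$.
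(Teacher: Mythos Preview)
Your proof is correct and follows essentially the same route as the paper: both use the triangle inequality for the spherical (angular) metric to bound $|\alpha-\alpha_\delta|$ by $\angle(V,V_\delta)$, and then convert the chordal bound $\norm{V-V_\delta}\le\delta$ into the angular bound $\delta+C\delta^3$ (the paper via $\arccos$, you via $2\arcsin(\delta/2)$, which are equivalent). Your write-up is more explicit about the Lipschitz formulation and the constant, but the argument is the same.
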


\begin{proof}
    We have by triangle inequality of the distance on the sphere
    \[
    \alpha-\alpha_\delta \leq \arccos(\langle V,V_{\delta}\rangle) = \arccos\left(1-2\norm{V-V_{\delta}}^2\right)\leq \delta + C\delta^3. \qedhere
    \]
\end{proof}

\begin{lemma}[Approximation of the angles]\label{lem:angle-approximation}
    \begin{align}
        |\alpha_i- \tilde{\alpha}_i| &\leq C\;(l_i^2+l_{i+1}^2).
    \end{align}
\end{lemma}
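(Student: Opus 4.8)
The plan is to compare the two angles by comparing the unit vectors whose inner products define their cosines, and then to invoke \Cref{lem:angle_vector_pertubation}. The geodesic angle $\alpha_i$ is the angle between the unit tangent vectors $\dot\gamma_{q;i-1}(l_{i-1})$ and $\dot\gamma_{q;i}(0)$ at $\xb_i$; by \eqref{eq:riemannian-log-geodesic-angle} these are $-\log_{\xb_i}\xb_{i-1}/l_{i-1}$ and $\log_{\xb_i}\xb_{i+1}/l_i$ respectively. The approximate angle $\tilde\alpha_i$ is the angle between the unit vectors $\vb_{i,1}/\norm{\vb_{i,1}}$ and $\vb_{i,2}/\norm{\vb_{i,2}}$, where $\vb_{i,1}=P_{\nb_i}(\xb_{i-1}-\xb_i)$ and $\vb_{i,2}=P_{\nb_i}(\xb_{i+1}-\xb_i)$. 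So the core of the proof is the estimate
\[
    \Norm{\frac{\log_{\xb_i}\xb_{i+1}}{l_i} - \frac{P_{\nb_i}(\xb_{i+1}-\xb_i)}{\norm{P_{\nb_i}(\xb_{i+1}-\xb_i)}}} \leq C l_i^2,
\]
and the analogous estimate for the $i-1$ direction, which contributes the $l_{i-1}^2$ term (note: the statement as written has $l_{i+1}^2$, but the two directions meeting at $\xb_i$ have lengths $l_{i-1}$ and $l_i$; I would either match the indexing of \eqref{eq:riemannian-log-geodesic-angle} or absorb this into the $O$-notation since all segment lengths are comparable to $\delta$). Once these two vector estimates are in hand, \Cref{lem:angle_vector_pertubation} — applied with $V$ the tangent of the geodesic, $V_\delta$ the normalized projected chord, and $W$ held fixed as one side while the other varies, then swapping roles — yields $|\alpha_i-\tilde\alpha_i|\leq C(l_{i-1}^2+l_i^2)$ after a triangle inequality on angles.

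The key estimate is a standard second-order Taylor/comparison argument for the Riemannian exponential map. Write $\vb\colonequals\log_{\xb_i}\xb_{i+1}\in T_{\xb_i}\M$, so that $\xb_{i+1}=\exp_{\xb_i}(\vb)$ and $\norm{\vb}=l_i$. Expanding the embedded geodesic $t\mapsto\exp_{\xb_i}(t\vb/l_i)$ in $\R^3$ to second order gives $\xb_{i+1}-\xb_i = \vb - \tfrac12\,\mathrm{II}_{\xb_i}(\vb,\vb)\,\nb_i + O(l_i^3)$, where $\mathrm{II}$ is the scalar second fundamental form (the second derivative of a geodesic is normal to the surface and equals the normal curvature). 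Hence the chord $\xb_{i+1}-\xb_i$ differs from $\vb$ only by an $O(l_i^2)$ vector that is \emph{normal} to $\M$ at $\xb_i$ plus an $O(l_i^3)$ remainder. Applying $P_{\nb_i}$ kills the leading normal term, so $P_{\nb_i}(\xb_{i+1}-\xb_i) = \vb + O(l_i^3)$; normalizing (the denominators $\norm{P_{\nb_i}(\xb_{i+1}-\xb_i)}$ and $l_i=\norm{\vb}$ agree up to $O(l_i^3)$, using the nondegeneracy assumption $\langle\xb_{i+1}-\xb_i,\nb_i\rangle^2\leq\tfrac12\norm{\xb_{i+1}-\xb_i}^2$ to keep the denominator bounded below) gives the claimed $O(l_i^2)$ — in fact $O(l_i^2)$ is already more than enough; the projected chord is actually an $O(l_i^2)$-accurate direction, matching what \Cref{lem:angle_vector_pertubation} needs with $\delta=Cl_i^2$.

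The main obstacle is bookkeeping rather than conceptual: one must be careful that (i) the second-order term in the chord expansion is genuinely normal — this uses the geodesic equation $\ddot\gamma\perp T\M$ for a manifold isometrically embedded in $\R^3$, so that the projection $P_{\nb_i}$ removes it exactly; (ii) the normalization step does not reintroduce a first-order error — this is where the nondegeneracy hypothesis on $\langle\xb_{i-1}-\xb_i,\nb_i\rangle$ is used, guaranteeing $\norm{\vb_{i,1}},\norm{\vb_{i,2}}$ are bounded away from zero and comparable to $l_{i-1},l_i$; and (iii) the two angle perturbations (one per incident edge) are combined correctly via the triangle inequality for the spherical distance (angle) between unit vectors, exactly as in the proof of \Cref{lem:angle_vector_pertubation}. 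None of these steps requires the curve $\C$ to be anything beyond $C^2$ near $\xb_i$, since only the geometry of $\M$ and the positions $\xb_{i-1},\xb_i,\xb_{i+1}$ enter; the constant $C$ depends on $\norm{\mathrm{II}}_{C^1}$ on a neighborhood, i.e. on the $C^3$-geometry of $\M$.
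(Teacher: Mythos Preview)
Your proposal is correct and follows essentially the same approach as the paper: establish the key estimate $\Norm{\log_{\xb}\yb/d_\M(\xb,\yb) - P_{\nb_\xb}(\yb-\xb)/\norm{P_{\nb_\xb}(\yb-\xb)}}\leq C\,d_\M^2(\xb,\yb)$ via a Taylor expansion of the embedded geodesic (the paper phrases this as expanding the vector field $t\mapsto P_{\nb_\xb}(\exp_\xb(t\log_\xb\yb)-\xb)$, which is the same computation), and then convert the vector estimate into an angle estimate via \Cref{lem:angle_vector_pertubation}. Your observation about the indexing ($l_{i-1}^2+l_i^2$ rather than $l_i^2+l_{i+1}^2$) is also apt.
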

\begin{proof}
    We have for points $\xb,\yb \in \M$ and normal vector $\nb_{\xb}$ in $\xb$
    \begin{align*}
        \Norm{\frac{\log_{\xb}\yb}{d_{\M}(\xb,\yb)} - \frac{P_{\nb_{\xb}}(\yb-\xb)}{\norm{P_{\nb_{\xb}}(\yb-\xb)}}} &\leq C d_{\M}^2(\xb,\yb).
    \end{align*}
    This can be checked by Taylor expansion of the vector field $\vb:[0,1]\to T_{\xb}\M$
    \begin{align*}
        \vb(t) = P_{\nb_{\xb}}(\underbrace{\exp_{\xb}(t \log_{\xb} \yb)}_{\gamma_{\xb,\yb}(t)} - \xb),\quad t\in[0,1].
    \end{align*}
    The estimate follows from \Cref{lem:angle_vector_pertubation}.
\end{proof}

Following the argumentation in \Cref{cor:f-fh} and \Cref{cor:g-gh} we obtain linear convergence of the irreducible components of this scheme.

\subsection{Approximation of the surface}
If we have only approximate knowledge about the underlying surface $\M$, e.g., only a triangulation or a discrete levelset representation is available, the surface normals $\nb$ are also known only approximately.

We consider a perturbation of the straight line polygonal approximation described in \Cref{sec:straight-line-approximation}, by replacing the $\xb_i\in \M$ with $\xb_{h;i}\in \R^3$, and the normal vectors $\nb_{i}$ by $\nb_{h,i}$, for $i=1,\ldots,q$.
The parameter $h$ is associated to the approximation quality in the sense that $\norm{\xb_i-\xb_{h,i}}\leq C h^2$, and $\norm{\nb_{i}-\nb_{h,i}}\leq C h$, for all $i=1,\ldots,q$.
Note that this corresponds to the quality of a piecewise linear approximation of $\M$. We will further assume that $\delta = O(h)$.

In the following, we omit the index $i$ for better readability.
For a vector $\vb\in\R^3$ we have the estimate
\begin{align*}
    \norm{P_{\nb}\vb - P_{\nb_{h}}\vb}
	& \leq 2 \norm{\vb}\; \norm{\nb_{h} - \nb}.
\end{align*}
As long as $\norm{P_{\nb_{h}}\vb}\geq \frac{1}{2}\norm{\vb}$, we also have
\begin{align*}
	\Norm{\frac{P_{\nb}\vb}{\norm{P_{\nb}\vb}} - \frac{P_{\nb_{h}\vb}}{\norm{P_{\nb_{h}}\vb}}}
	& \leq 8 \norm{\nb_{h} - \nb}.
\end{align*}

For a quadratic approximation order of the normals, linear convergence of the scheme would directly follow from \Cref{lem:angle_vector_pertubation}. However, even a linear approximation of the normals is enough to ensure quadratic convergence of the angles, and hence linear convergence of the surface MT.\@

\begin{lemma}\label{lem:PhvPhw-vw}
    Let $\vb,\wb\in T_{\xb}\M$ with $\norm{\vb}=\norm{\wb}=1$, normal vector $\nb=\nb_{\xb}$, and a point $\xb_h$ associated to $\xb$ with normal vector $\nb_h=\nb_{h,\xb_h}$. Then
    \begin{align*}
        \left|\frac{\langle \PPh\vb,\PPh\wb\rangle}{\norm{\PPh\vb}\norm{\PPh\wb}} - \langle\vb,\wb \rangle \right|\leq C h^4 + Ch^2 \norm{\vb-\wb}^2.
    \end{align*}
\end{lemma}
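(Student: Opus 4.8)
The plan is to reduce the statement to a scalar Taylor expansion in the two small quantities $a:=\langle\vb,\nb_h\rangle$ and $b:=\langle\wb,\nb_h\rangle$, and then to extract a hidden factor $\norm{\vb-\wb}^2$ from the leading $O(h^2)$ correction. First I would use that $\PPh$ is an orthogonal projection, so $\langle\PPh\vb,\PPh\wb\rangle=\langle\vb,\PPh\wb\rangle=\langle\vb,\wb\rangle-\langle\vb,\nb_h\rangle\langle\wb,\nb_h\rangle$ and, since $\norm{\vb}=\norm{\wb}=1$, $\norm{\PPh\vb}^2=1-\langle\vb,\nb_h\rangle^2$ and likewise for $\wb$. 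Because $\vb,\wb\in T_\xb\M=\nb^\perp$, we have $a=\langle\vb,\nb_h-\nb\rangle$ and $b=\langle\wb,\nb_h-\nb\rangle$, so $|a|,|b|\le\norm{\nb_h-\nb}\le Ch$; in particular $\norm{\PPh\vb}^2=1-a^2\ge\tfrac12$ for $h$ small (as also assumed in the surrounding discussion), so the quotient is well defined, and the quantity to be bounded equals
\[
\frac{\langle\vb,\wb\rangle-ab}{\sqrt{(1-a^2)(1-b^2)}}-\langle\vb,\wb\rangle .
\]

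Next I would Taylor expand $(1-a^2)^{-1/2}(1-b^2)^{-1/2}=1+\tfrac12(a^2+b^2)+O(h^4)$, which yields
\[
\frac{\langle\vb,\wb\rangle-ab}{\sqrt{(1-a^2)(1-b^2)}}-\langle\vb,\wb\rangle
=\tfrac12\langle\vb,\wb\rangle(a^2+b^2)-ab+O(h^4),
\]
the remainder being uniform in $\vb,\wb$ since $|\langle\vb,\wb\rangle|\le1$ and $|ab|\,(a^2+b^2)=O(h^4)$. It then remains to bound $E:=\tfrac12\langle\vb,\wb\rangle(a^2+b^2)-ab$ by $Ch^2\norm{\vb-\wb}^2$.

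For this crucial step I would set $\rho:=\langle\vb,\wb\rangle$ and decompose $\wb=\rho\vb+\wb^\perp$ with $\wb^\perp\in T_\xb\M$, $\wb^\perp\perp\vb$, $\norm{\wb^\perp}^2=1-\rho^2$. Then $b=\rho a+d$ with $d:=\langle\wb^\perp,\nb_h\rangle=\langle\wb^\perp,\nb_h-\nb\rangle$ and $|d|\le\norm{\wb^\perp}\,\norm{\nb_h-\nb}\le C\norm{\wb^\perp}h$. Substituting $b=\rho a+d$ and simplifying gives the identity
\[
E=\frac{\rho^2-1}{2}\bigl(\rho a^2+2ad\bigr)+\frac{\rho}{2}d^2 .
\]
Since $1-\rho^2\le 2(1-\rho)=\norm{\vb-\wb}^2$ and $\norm{\wb^\perp}^2=1-\rho^2\le\norm{\vb-\wb}^2$, and $|a|,|d|\le Ch$, $|\rho|\le1$, each term is bounded by $Ch^2\norm{\vb-\wb}^2$; adding the $O(h^4)$ remainder from the previous step completes the proof.

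The main obstacle is precisely this last step: the naive bound $|E|\le C(a^2+b^2)=O(h^2)$ is one power of $h$ short of the claim whenever $\vb\ne\wb$, and $E$ is not identically zero, so one genuinely has to see that the leading correction is quadratic in the angle between $\vb$ and $\wb$. The decomposition $\wb=\rho\vb+\wb^\perp$ (equivalently, choosing an orthonormal frame of $T_\xb\M$ aligned with the tangential part of $\nb_h-\nb$) is what makes the factor $1-\rho^2=1-\langle\vb,\wb\rangle^2$ appear; the remaining estimates are routine, relying only on $\norm{\nb_h-\nb}\le Ch$ and elementary Taylor expansion.
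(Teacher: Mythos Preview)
Your proof is correct and follows the same overall strategy as the paper: reduce everything to the scalars $a=\langle\vb,\nb_h\rangle$, $b=\langle\wb,\nb_h\rangle$ (both $O(h)$ since $\vb,\wb\perp\nb$), Taylor expand, and show the leading $O(h^2)$ correction $E=\tfrac{\rho}{2}(a^2+b^2)-ab$ carries a hidden factor $\norm{\vb-\wb}^2$.

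The only difference is in how that factor is extracted. You introduce the orthogonal decomposition $\wb=\rho\vb+\wb^\perp$, substitute $b=\rho a+d$, and expand. The paper instead rewrites $E$ directly as
\[
E=\tfrac{\rho}{2}(a-b)^2+(\rho-1)\,ab=\tfrac{\rho}{2}(a-b)^2-\tfrac12\norm{\vb-\wb}^2\,ab,
\]
and then observes that $a-b=\langle\vb-\wb,\nb_h\rangle=\langle\vb-\wb,\nb_h-\nb\rangle$, so $(a-b)^2\le Ch^2\norm{\vb-\wb}^2$ immediately. This avoids the auxiliary vector $\wb^\perp$ and the term-by-term bookkeeping; your route is a bit more computational but equally valid, and your remark about aligning a frame with the tangential part of $\nb_h-\nb$ is morally the same observation.
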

\begin{proof}
    Note that $\norm{\PPh\vb} = \sqrt{1-\langle \vb,\nb_h\rangle^2} =1+O(h)$, and analogously for $\PPh\wb$.
    We rewrite
    \begin{align*}
        \langle \PPh\vb,\PPh\wb\rangle- \norm{\PPh\vb}\norm{\PPh\wb}\langle\vb,\wb \rangle
            &= \langle\vb,\wb \rangle\left(1-\norm{\PPh\vb}\norm{\PPh\wb} - \langle \vb,\nb_h\rangle\langle \wb,\nb_h\rangle \right) \\
            &\phantom{=} - \frac{1}{2}\norm{\vb-\wb}^2\langle \vb,\nb_h-\nb\rangle\langle \wb,\nb_h-\nb\rangle.
    \end{align*}
    Using a Taylor expansion of $x\mapsto \sqrt{1-x^2}$, we can estimate
    \begin{multline*}
        \left|
            1-\norm{\PPh\vb}\norm{\PPh\wb} - \langle \vb,\nb_h\rangle\langle \wb,\nb_h\rangle
        \right| \\
        \begin{aligned}
        &\leq \left|
            \frac{1}{2} (\langle \vb,\nb_h\rangle^2+\langle \wb,\nb_h\rangle^2) - \langle \vb,\nb_h\rangle\langle \wb,\nb_h\rangle
        \right|+Ch^4\\
        &\leq Ch^2 \norm{\vb-\wb}^2 +  C h^4,
        \end{aligned}
    \end{multline*}
    from which the assertion follows.
\end{proof}

\begin{lemma}\label{lem:PhvPhw-PvPw}
    Let $\vb,\wb\in \R^3$ be two vectors at a point $\xb\in \M$ with associated normal vector $\nb=\nb_{\xb}$, such that $|\langle \vb, \nb\rangle |\leq ch\norm{\vb}$ and $|\langle \wb, \nb\rangle |\leq ch\norm{\wb}$, and let $\xb_h$ be a point associated to $\xb$ with normal vector $\nb_h=\nb_{h,\xb_h}$. Then
    \begin{equation*}
        \left|
          \frac{\langle \PPh\vb,\PPh\wb\rangle}{\norm{\PPh\vb}\norm{\PPh\wb}}
          -\frac{\langle \PP\vb,\PP\wb\rangle}{\norm{\PP\vb}\norm{\PP\wb}}
        \right|
        \leq C h^4 + Ch^2\Norm{\frac{\PP\vb}{\norm{\PP\vb}} -\frac{\PP\wb}{\norm{\PP\wb}}}.
    \end{equation*}
\end{lemma}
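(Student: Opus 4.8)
The plan is to reduce the inequality to an elementary estimate in a handful of scalars. Both sides are invariant under $\vb\mapsto\lambda\vb$, $\wb\mapsto\mu\wb$ with $\lambda,\mu>0$, so I may assume $\norm{\vb}=\norm{\wb}=1$; the hypotheses then give $|\langle\vb,\nb\rangle|,|\langle\wb,\nb\rangle|\le ch$, and since $\norm{\nb_h-\nb}\le Ch$ also $|\langle\vb,\nb_h\rangle|,|\langle\wb,\nb_h\rangle|\le Ch$. Using $\PP\vb=\vb-\langle\vb,\nb\rangle\nb$ and $\PPh\vb=\vb-\langle\vb,\nb_h\rangle\nb_h$ (orthogonal projections onto the complements of the unit vectors $\nb$, $\nb_h$), one has $\langle\PP\vb,\PP\wb\rangle=\langle\vb,\wb\rangle-\langle\vb,\nb\rangle\langle\wb,\nb\rangle$ and $\norm{\PP\vb}^2=1-\langle\vb,\nb\rangle^2$, and likewise for $\nb_h$. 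Thus both normalized inner products in the statement are values of a single function $F(x,y)\colonequals\bigl(\langle\vb,\wb\rangle-xy\bigr)\bigl((1-x^2)(1-y^2)\bigr)^{-1/2}$, namely $F(p,q)$ with $p=\langle\vb,\nb\rangle$, $q=\langle\wb,\nb\rangle$ for the $\PP$-version and $F(p_h,q_h)$ with $p_h=\langle\vb,\nb_h\rangle$, $q_h=\langle\wb,\nb_h\rangle$ for the $\PPh$-version. Moreover, with the unit tangent vectors $\vb'\colonequals\PP\vb/\norm{\PP\vb}$ and $\wb'\colonequals\PP\wb/\norm{\PP\wb}$ we have $F(p,q)=\langle\vb',\wb'\rangle$ and $\Norm{\frac{\PP\vb}{\norm{\PP\vb}}-\frac{\PP\wb}{\norm{\PP\wb}}}=\norm{\vb'-\wb'}$. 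All of $p,q,p_h,q_h$ and the increments $a\colonequals p_h-p=\langle\vb,\nb_h-\nb\rangle$, $b\colonequals q_h-q=\langle\wb,\nb_h-\nb\rangle$ are $O(h)$; for $h$ small the radicands are $1+O(h^2)$, so $F$ is smooth there and the reduction is legitimate.

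A Taylor expansion of $x\mapsto(1-x^2)^{-1/2}$ gives $F(p_h,q_h)-F(p,q)=\langle\vb,\wb\rangle\bigl(pa+qb+\tfrac12a^2+\tfrac12b^2\bigr)-\bigl(pb+aq+ab\bigr)+O(h^4)$, which after collecting terms equals
\[
  (a-b)(p-q)+\tfrac12(a-b)^2+(\langle\vb,\wb\rangle-1)(pa+qb)+\tfrac{\langle\vb,\wb\rangle-1}{2}(a^2+b^2)+O(h^4).
\]
The last two terms are immediate: from $\norm{\vb-\wb}^2=2-2\langle\vb,\wb\rangle$ and $F(p,q)=\langle\vb,\wb\rangle+O(h^2)$ one gets $|\langle\vb,\wb\rangle-1|=\tfrac12\norm{\vb-\wb}^2\le\norm{\vb'-\wb'}+Ch^2$, and multiplying by $|pa+qb|,|a^2+b^2|=O(h^2)$ bounds them by $Ch^2\norm{\vb'-\wb'}+Ch^4$. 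It remains to handle $(a-b)(p-q)$ and $\tfrac12(a-b)^2$.

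The crux, which I expect to be the main obstacle, is the refined estimate $|a-b|=|\langle\vb-\wb,\nb_h-\nb\rangle|\le Ch\norm{\vb'-\wb'}+Ch^3$. The naive bound $|a-b|\le\norm{\vb-\wb}\,\norm{\nb_h-\nb}=O(h)$ is too crude, because $\vb$ and $\wb$ may differ essentially only in their (small) $\nb$-components. Instead I split $\vb-\wb=\bigl(\sqrt{1-p^2}\,\vb'-\sqrt{1-q^2}\,\wb'\bigr)+(p-q)\nb$ into its tangential part and its $\nb$-component. Pairing the $\nb$-component with $\nb_h-\nb$ gives $(p-q)\langle\nb,\nb_h-\nb\rangle$, and here one uses crucially that $\langle\nb,\nb_h-\nb\rangle=\langle\nb,\nb_h\rangle-1=-\tfrac12\norm{\nb_h-\nb}^2=O(h^2)$, so this piece is only $O(h)\cdot O(h^2)=O(h^3)$; pairing the tangential part, written as $\sqrt{1-p^2}\,(\vb'-\wb')+(\sqrt{1-p^2}-\sqrt{1-q^2})\wb'$, with $\nb_h-\nb$ gives a contribution bounded by $Ch\norm{\vb'-\wb'}$ plus, using $|\sqrt{1-p^2}-\sqrt{1-q^2}|=O(h^2)$, an $O(h^3)$ term. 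With this in hand, $|p-q|\le 2ch$ yields $|(a-b)(p-q)|\le Ch^2\norm{\vb'-\wb'}+Ch^4$, and $\tfrac12(a-b)^2\le C(h\norm{\vb'-\wb'}+h^3)^2\le Ch^2\norm{\vb'-\wb'}+Ch^4$ using $\norm{\vb'-\wb'}\le 2$ and $h\le 1$. Summing the four terms with the $O(h^4)$ remainder gives the claim.
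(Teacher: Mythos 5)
Your proof is correct, but it takes a genuinely different route from the paper's. The paper proves the estimate in two stages: it first applies \Cref{lem:PhvPhw-vw} to the normalized tangent vectors $\tilde{\vb}=\PP\vb/\norm{\PP\vb}$, $\tilde{\wb}=\PP\wb/\norm{\PP\wb}$, and then compares $\langle\hat{\vb},\hat{\wb}\rangle$ with $\langle\hat{\vb}_h,\hat{\wb}_h\rangle$ (the $\PPh$-normalizations of $\vb,\wb$ versus of $\PP\vb,\PP\wb$) using $\norm{\PPh\vb-\PPh\PP\vb}=|\langle\vb,\nb\rangle|\,\norm{\PPh\nb}\leq Ch^2\norm{\vb}$ and a polarization-type bound $|\langle\hat{\vb}-\hat{\vb}_h,\hat{\wb}\rangle|\leq \tfrac12\norm{\hat{\vb}-\hat{\vb}_h}^2+\tfrac14\norm{\hat{\vb}-\hat{\vb}_h}\norm{\hat{\vb}-\hat{\wb}}$, so that the factor $\norm{\tilde{\vb}-\tilde{\wb}}$ in the final bound is inherited from the preceding lemma and from triangle-inequality bookkeeping among the four unit vectors. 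You instead bypass \Cref{lem:PhvPhw-vw} entirely: after the (legitimate) reduction to $\norm{\vb}=\norm{\wb}=1$, you observe that both normalized inner products are values of the single scalar function $F(x,y)=(\langle\vb,\wb\rangle-xy)\,((1-x^2)(1-y^2))^{-1/2}$ at the normal components $(p,q)$ and $(p_h,q_h)$, Taylor-expand once, and recover the crucial factor $\Norm{\tilde{\vb}-\tilde{\wb}}$ through two refined estimates, namely $|\langle\vb,\wb\rangle-1|\leq\norm{\tilde{\vb}-\tilde{\wb}}+Ch^2$ and, for the increment difference, $|\langle\vb-\wb,\nb_h-\nb\rangle|\leq Ch\norm{\tilde{\vb}-\tilde{\wb}}+Ch^3$, the latter exploiting the tangential/normal splitting of $\vb-\wb$ together with $\langle\nb,\nb_h-\nb\rangle=-\tfrac12\norm{\nb_h-\nb}^2=O(h^2)$; I checked the algebraic rearrangement into $(a-b)(p-q)+\tfrac12(a-b)^2+(\langle\vb,\wb\rangle-1)(pa+qb)+\tfrac{\langle\vb,\wb\rangle-1}{2}(a^2+b^2)$ and it is exact. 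What each approach buys: the paper's argument is modular (it reuses \Cref{lem:PhvPhw-vw}, which is needed anyway, and keeps all manipulations at the level of unit vectors), whereas yours is self-contained, reduces the claim to a two-variable scalar expansion with completely explicit error bookkeeping, and makes transparent exactly where the two structural facts enter: the near-tangency $|\langle\vb,\nb\rangle|,|\langle\wb,\nb\rangle|\leq ch$ and the second-order relation $\langle\nb,\nb_h\rangle-1=O(h^2)$ for unit normals. Both yield the same $Ch^4+Ch^2\Norm{\tilde{\vb}-\tilde{\wb}}$ structure under the same standing assumptions ($\norm{\nb-\nb_h}\leq Ch$ from the surrounding discretization setup).
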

\begin{proof}
    We set $\tilde{\vb}\colonequals \frac{\PP\vb}{\norm{\PP\vb}}$, and $\tilde{\wb}\colonequals \frac{\PP\wb}{\norm{\PP\wb}}$. By \Cref{lem:PhvPhw-vw}, we have
    \begin{align*}
        \left|\frac{\langle \PPh\tilde{\vb},\PPh\tilde{\wb}\rangle}{\norm{\PPh\tilde{\vb}}\norm{\PPh\tilde{\wb}}} - \frac{\langle \PP\vb,\PP\wb\rangle}{\norm{\PP\vb}\norm{\PP\wb}} \right|\leq C h^4 + Ch^2 \norm{\tilde{\vb}-\tilde{\wb}}^2.
    \end{align*}
    We set further $\hat{\vb}\colonequals \frac{\PPh\vb}{\norm{\PPh\vb}}$, $\hat{\wb}\colonequals \frac{\PPh\wb}{\norm{\PPh\wb}}$, $\hat{\vb}_h \colonequals \frac{\PPh\PP\vb}{\norm{\PPh\PP\vb}}$, and $\hat{\wb}_h\colonequals \frac{\PPh\PP\wb}{\norm{\PPh\PP\wb}}$.
    Then
    \begin{align*}
        \frac{\langle \PPh\tilde{\vb},\PPh\tilde{\wb}\rangle}{\norm{\PPh\tilde{\vb}}\norm{\PPh\tilde{\wb}}}
        & = \langle\hat{\vb}_h,\hat{\wb}_h\rangle,
    \end{align*}
    and from $\norm{\PPh \vb-\PPh\PP\vb}=|\langle \vb,\nb\rangle|\norm{\PPh\nb}\leq Ch^2\norm{\vb} \leq Ch^2\norm{\PPh \vb}$ follows ${\norm{\hat{\vb}-\hat{\vb}_h}\leq Ch^2}$, and with a similar argument also $\norm{\hat{\wb}-\hat{\wb}_h}\leq Ch^2$. Further, we have
    \begin{align*}
        |\langle \hat{\vb}-\hat{\vb}_h, \hat{\wb}\rangle|& \leq \frac{1}{2}\norm{\hat{\vb}-\hat{\vb}_h}^2 + \frac{1}{4}\norm{\hat{\vb}-\hat{\vb}_h}
        \norm{\hat{\vb}-\hat{\wb}}\\
        & \leq Ch^4 + Ch^2
        \norm{\hat{\vb}-\hat{\wb}}.
    \end{align*}
    Thus, we get
    \begin{align*}
      \left|
          \langle\hat{\vb},\hat{\wb}\rangle - \langle\hat{\vb}_h,\hat{\wb}_h\rangle
       \right|
       &\leq Ch^4 + Ch^2 \norm{\hat{\vb}-\hat{\wb}} + Ch^2 \norm{\hat{\vb}_h-\hat{\wb}_h}.
    \end{align*}
    As we can estimate
    \begin{align*}
      \norm{\hat{\vb}-\hat{\wb}}\leq Ch^2 + \norm{\hat{\vb}_h-\hat{\wb}_h} \leq Ch^2 + (1+Ch) \norm{\tilde{\vb}-\tilde{\wb}},
    \end{align*}
    the assertion follows.
\end{proof}

\begin{corollary}\label{cor:linear-convergence-line-segment-approx}
    Let $\vb,\wb\in \R^3$ be two vectors at a point $\xb\in\M$ with associated normal vector $\nb=\nb_{\xb}$, such that $|\langle \vb, \nb\rangle |\leq ch\norm{\vb}$ and $|\langle \wb, \nb\rangle |\leq ch\norm{\wb}$, and let $\vb_h,\wb_h\in \R^3$ be two vectors at a point $\xb_h$ with associated normal vector $\nb_h=\nb_{h,\xb_h}$, such that $\norm{\vb-\vb_h} + \norm{\wb-\wb_h}\leq Ch^2$. Then
    \begin{equation*}
        \left|
          \frac{\langle \PPh\vb_h, \PPh\wb_h\rangle}{\norm{\PPh\vb_h}\norm{\PPh\wb_h}}
          -\frac{\langle \PP\vb, \PP\wb\rangle}{\norm{\PP\vb}\norm{\PP\wb}}
        \right|
        \leq C h^4 + Ch^2\Norm{\frac{\PP\vb}{\norm{\PP\vb}} -\frac{\PP\wb}{\norm{\PP\wb}}}.
    \end{equation*}
\end{corollary}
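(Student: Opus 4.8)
The plan is to obtain the corollary from \Cref{lem:PhvPhw-PvPw} by viewing the passage from the pair $(\vb,\wb)$ at $\xb$ to the pair $(\vb_h,\wb_h)$ at $\xb_h$ as an $O(h^2)$ perturbation, and to absorb this perturbation with a triangle inequality together with a polarization argument. As in \Cref{lem:PhvPhw-PvPw} I may assume $\norm{\vb}=\norm{\wb}=1$. Writing $\hat\vb\colonequals\PPh\vb/\norm{\PPh\vb}$, $\hat\wb\colonequals\PPh\wb/\norm{\PPh\wb}$, $\hat\vb_h\colonequals\PPh\vb_h/\norm{\PPh\vb_h}$, $\hat\wb_h\colonequals\PPh\wb_h/\norm{\PPh\wb_h}$, and $\tilde\vb\colonequals\PP\vb/\norm{\PP\vb}$, $\tilde\wb\colonequals\PP\wb/\norm{\PP\wb}$, the quantity to be estimated is $|\langle\hat\vb_h,\hat\wb_h\rangle-\langle\tilde\vb,\tilde\wb\rangle|$, which I bound by $|\langle\hat\vb_h,\hat\wb_h\rangle-\langle\hat\vb,\hat\wb\rangle|+|\langle\hat\vb,\hat\wb\rangle-\langle\tilde\vb,\tilde\wb\rangle|$. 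The second summand is precisely the left-hand side of \Cref{lem:PhvPhw-PvPw} (whose hypotheses on $\vb,\wb$ are the ones assumed here) and is therefore already bounded by $Ch^4+Ch^2\norm{\tilde\vb-\tilde\wb}$, so the work is to bound the perturbation term $|\langle\hat\vb_h,\hat\wb_h\rangle-\langle\hat\vb,\hat\wb\rangle|$ by the same expression.

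First I would dispose of the normalizations. Using $\langle\vb,\nb_h\rangle=\langle\vb,\nb\rangle+\langle\vb,\nb_h-\nb\rangle$ with $|\langle\vb,\nb\rangle|\le ch$ and $\norm{\nb-\nb_h}\le Ch$ one gets $|\langle\vb,\nb_h\rangle|=O(h)$, hence $\norm{\PPh\vb}=\sqrt{1-\langle\vb,\nb_h\rangle^2}=1+O(h^2)\ge\tfrac12$ for $h$ small; the same holds for $\wb$ and, since $\norm{\vb-\vb_h}\le Ch^2$, also for $\vb_h$ and $\wb_h$. Because $\PPh$ is an orthogonal projection, $\norm{\PPh\vb-\PPh\vb_h}\le\norm{\vb-\vb_h}\le Ch^2$, and combined with the lower bounds on the norms this yields $\norm{\hat\vb-\hat\vb_h}\le Ch^2$ and, likewise, $\norm{\hat\wb-\hat\wb_h}\le Ch^2$.

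The crude bound $|\langle\hat\vb_h,\hat\wb_h\rangle-\langle\hat\vb,\hat\wb\rangle|\le\norm{\hat\vb-\hat\vb_h}+\norm{\hat\wb-\hat\wb_h}$ only delivers $O(h^2)$ and discards the factor $\norm{\tilde\vb-\tilde\wb}$; instead I would polarize exactly as in the proofs of \Cref{lem:PhvPhw-vw} and \Cref{lem:PhvPhw-PvPw}. Splitting $\langle\hat\vb_h,\hat\wb_h\rangle-\langle\hat\vb,\hat\wb\rangle=\langle\hat\vb_h-\hat\vb,\hat\wb_h\rangle+\langle\hat\vb,\hat\wb_h-\hat\wb\rangle$ and writing $\hat\wb_h=\hat\vb+(\hat\wb-\hat\vb)+(\hat\wb_h-\hat\wb)$ in the first term, the contribution $\langle\hat\vb_h-\hat\vb,\hat\vb\rangle=-\tfrac12\norm{\hat\vb_h-\hat\vb}^2$ is $O(h^4)$ since $\hat\vb,\hat\vb_h$ are unit, the mixed contribution $\langle\hat\vb_h-\hat\vb,\hat\wb-\hat\vb\rangle$ is at most $Ch^2\norm{\hat\vb-\hat\wb}$, and the remaining one $\langle\hat\vb_h-\hat\vb,\hat\wb_h-\hat\wb\rangle$ is $O(h^4)$; the second term is handled symmetrically. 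This gives $|\langle\hat\vb_h,\hat\wb_h\rangle-\langle\hat\vb,\hat\wb\rangle|\le Ch^4+Ch^2\norm{\hat\vb-\hat\wb}$, and the inequality $\norm{\hat\vb-\hat\wb}\le Ch^2+(1+Ch)\norm{\tilde\vb-\tilde\wb}$ already derived inside the proof of \Cref{lem:PhvPhw-PvPw} lets me absorb this into $Ch^4+Ch^2\norm{\tilde\vb-\tilde\wb}$. Adding the two summands concludes the argument. The one genuinely delicate point is this last step: the crude triangle inequality is not good enough, and one must extract the factor $\norm{\tilde\vb-\tilde\wb}$ from the cross terms by polarization, so that the estimate stays sharp enough to be summed into the accumulated angle error that produces linear convergence of the surface Minkowski tensors.
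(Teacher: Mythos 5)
Your route is the natural completion of the paper's argument (the paper states this corollary without an explicit proof): split off the perturbation from $(\vb,\wb)$ to $(\vb_h,\wb_h)$ by a triangle inequality, handle the remaining term by \Cref{lem:PhvPhw-PvPw}, and keep the factor $\Norm{\tfrac{\PP\vb}{\norm{\PP\vb}}-\tfrac{\PP\wb}{\norm{\PP\wb}}}$ in the perturbation term by polarizing and reusing the estimate $\norm{\hat{\vb}-\hat{\wb}}\leq Ch^2+(1+Ch)\norm{\tilde{\vb}-\tilde{\wb}}$ from that lemma's proof. Granted vectors of unit (or uniformly order-one) length, every step you give is sound.

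The one step that does not go through as written is the reduction ``w.l.o.g.\ $\norm{\vb}=\norm{\wb}=1$''. In \Cref{lem:PhvPhw-PvPw} this is harmless because hypotheses and conclusion are invariant under separate positive rescaling of $\vb$ and $\wb$; here the extra hypothesis $\norm{\vb-\vb_h}+\norm{\wb-\wb_h}\leq Ch^2$ is \emph{not} scale invariant, so after normalizing, the perturbation is only of size $Ch^2/\norm{\vb}$, and your key estimate $\norm{\hat{\vb}-\hat{\vb}_h}\leq Ch^2$ requires $\norm{\vb},\norm{\wb}$ bounded below independently of $h$. This is exactly the regime that matters for the application following the corollary, where $\vb=\xb_{i-1}-\xb_i$ has $\norm{\vb}\sim\delta=O(h)$: there your polarization only yields $Ch^2+Ch\,\norm{\tilde{\vb}-\tilde{\wb}}$ instead of $Ch^4+Ch^2\norm{\tilde{\vb}-\tilde{\wb}}$, and indeed without a lower bound on $\norm{\vb}$ the asserted inequality itself can fail: with $\nb_h=\nb$, orthonormal tangent vectors $\Vec{t}_1,\Vec{t}_2$ at $\xb$, $\vb=h\Vec{t}_1$, $\wb=h(\cos(h)\Vec{t}_1+\sin(h)\Vec{t}_2)$, $\vb_h=\vb+h^2\Vec{t}_2$, $\wb_h=\wb$, the left-hand side is of order $h^2$ while the right-hand side is of order $h^3$. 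So either prove (and read) the corollary for unit vectors and then, in the application, track the factor $\norm{\vb}^{-1}$ when normalizing $\vb$ and $\vb_h$ separately (or use additional structure of the vertex perturbations $\xb_{h,i}-\xb_i$, e.g.\ that they are essentially normal to $\M$), or carry the factors $\norm{\vb}^{-1},\norm{\wb}^{-1}$ explicitly through your estimates. This imprecision is partly inherited from the statement itself, but your ``w.l.o.g.'' is the point where it becomes load-bearing and must be justified.
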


Setting $\vb\colonequals \xb_{i-1} - \xb_{i}$, $\wb\colonequals \xb_{i} - \xb_{i+1}$,  $\vb_h\colonequals \xb_{h,i-1} - \xb_{h,i}$, and $\wb_h\colonequals \xb_{h,i} - \xb_{h,i+1}$, all associated to the base points $\xb_i$ and $\xb_{h,i}$, respectively, this implies for the angles of the approximate scheme indeed a quadratic error in $h$. Thus, linear convergence of the irreducible components of the surface MT follows.

\section{Numerical experiments}\label{sec:numerical-experiments}
We study the irreducible surface MT on various surfaces to confirm the ability to characterize shapes by the eigenvalues of the irreducible surface MT, to study the convergence properties of the various approximations, and to demonstrate the applicability in a real-world example in biology. We further want to highlight that the irreducible surface MT are easy to implement. An implementation in a Julia package, which is used for all numerical experiments, is provided in \texttt{SurfaceMinkowski.jl}~\cite{SurfaceMinkowski-jl}.

We consider three types of surfaces:
\begin{enumerate}
    \item The sphere with analytically known geodesics along the great circle arcs and explicit formulas for Riemannian distance, $\exp$, and $\log$;
    \item Parametrized surfaces with a single chart, ellipsoid and torus, where the geodesics are given implicitly as solutions of ordinary differential equations in intrinsic coordinates or from an embedding of the surface in $\R^3$;
    \item Surfaces given as a triangulation with curves implicitly represented as the levelset of a discrete function on these surfaces.
\end{enumerate}

\subsection{Validation of shape characterization}\label{sec:shape-characterization}
The first verification of the method is on the sphere. We study regular polygons with vertices on a geodesic circle. \Cref{thm:eigenvalues-equal-angle-polygon} says that for a geodesic polygon with $p$ vertices and all angles and edge lengths equal, the normalized eigenvalues of the irreducible surface MT $\mu_p=1$ as well as all $\mu_m=1$ for $m$ a multiple of $p$, but the other eigenvalues should be 0. This can be observed in the numerical experiment visualized in \Cref{fig:sketch-RegularSphere}. We measured the normalized eigenvalues $\mu_p$ for all $p\in\{1,\ldots,6\}$ for regular polygons with vertices from $3,\ldots,6$. For the geodesic triangle, both $\mu_3$ and $\mu_6$ are measured as $1.0$, and for the other polygons, the normalized eigenvalue corresponding to the number of vertices is measured as $1.0$. All other eigenvalues in the tested range are, as expected, measured as 0. These results are computed by the formula~\eqref{eq:eigenvalues-geodesic-polygon} for the normalized Minkowski eigenvalues of a geodesic polygon. In the formula only the angles between tangents at the corner vertices are involved. Those could be measured by the inner product of the Riemannian log between neighboring corner points, compare~\eqref{eq:riemannian-log-geodesic-angle}. For the sphere, the $\log_{\xb}\yb \simeq P_{\xb}(\yb-\xb)$ is related to the projected difference between the point coordinates, see~\eqref{eq:projected-difference}, and can thus easily be computed. A geodesic polygon on the sphere is also simply constructed by an equi-distribution of points on a geodesic circle.

\begin{figure}
\centering
    \begin{subfigure}[t]{0.22\textwidth}
        \includegraphics[width=\textwidth]{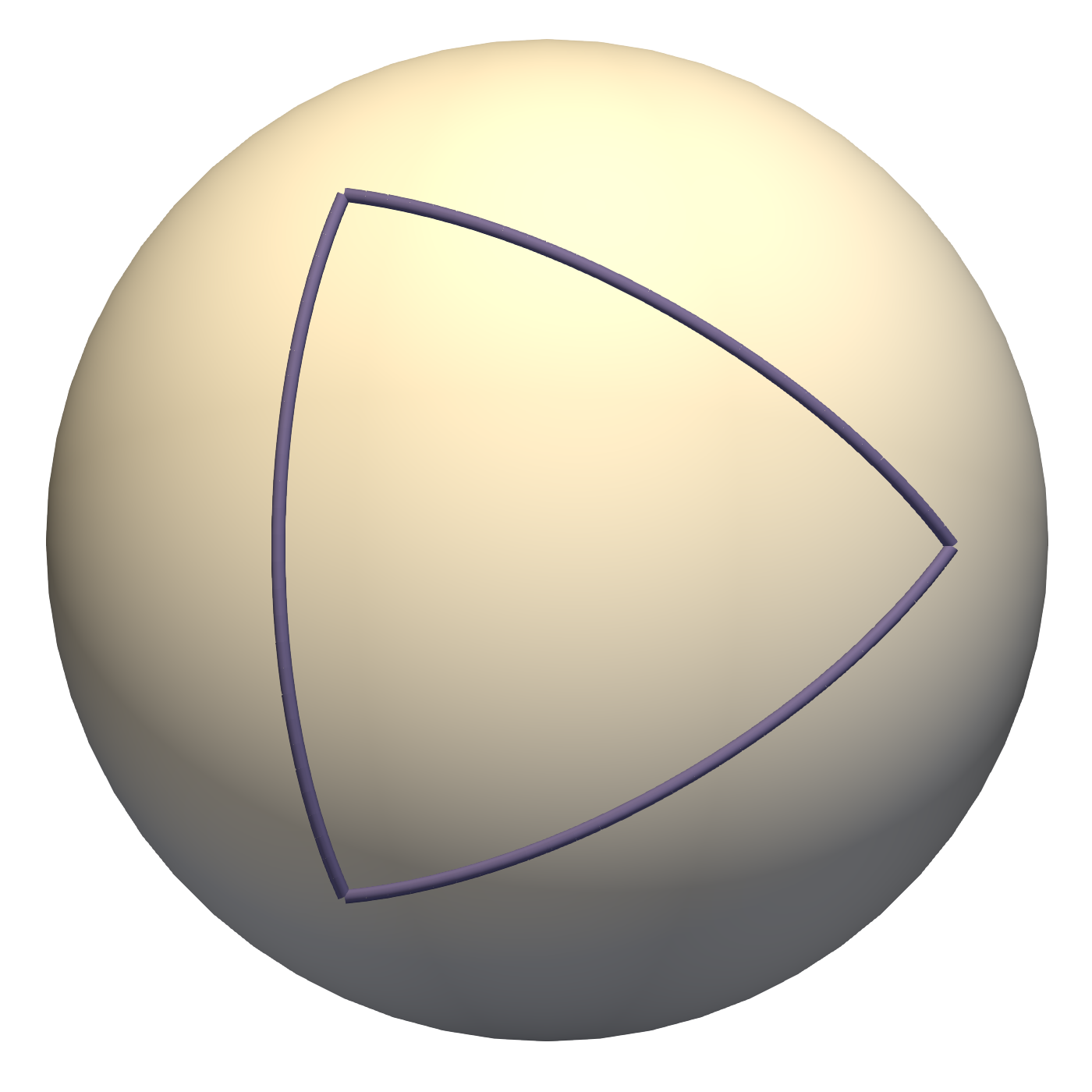}
        \subcaption{$\mu_3=\mu_6=1.0$}
    \end{subfigure}%
    \begin{subfigure}[t]{0.22\textwidth}
        \includegraphics[width=\textwidth]{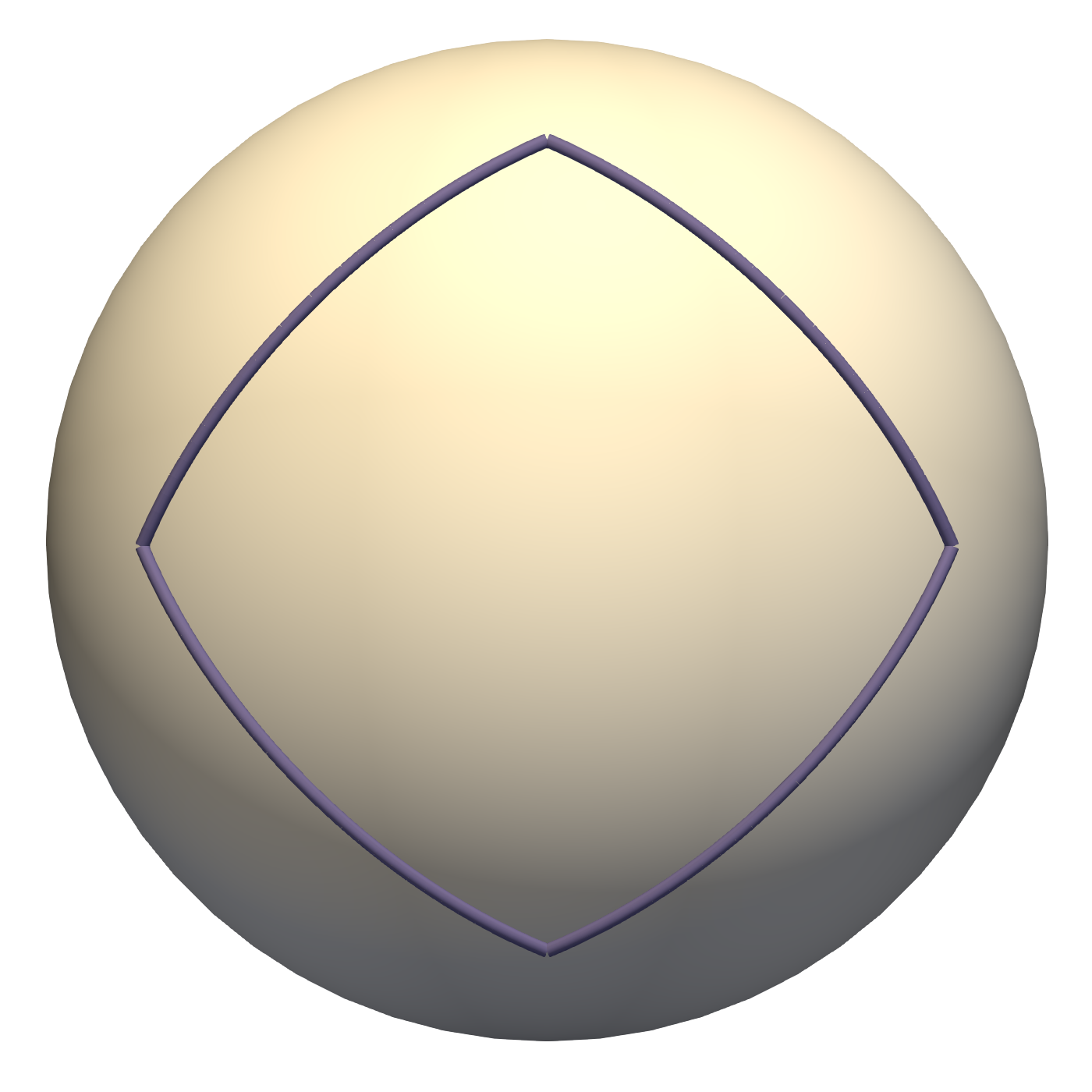}
        \subcaption{$\mu_4=1.0$}
    \end{subfigure}%
    \begin{subfigure}[t]{0.22\textwidth}
        \includegraphics[width=\textwidth]{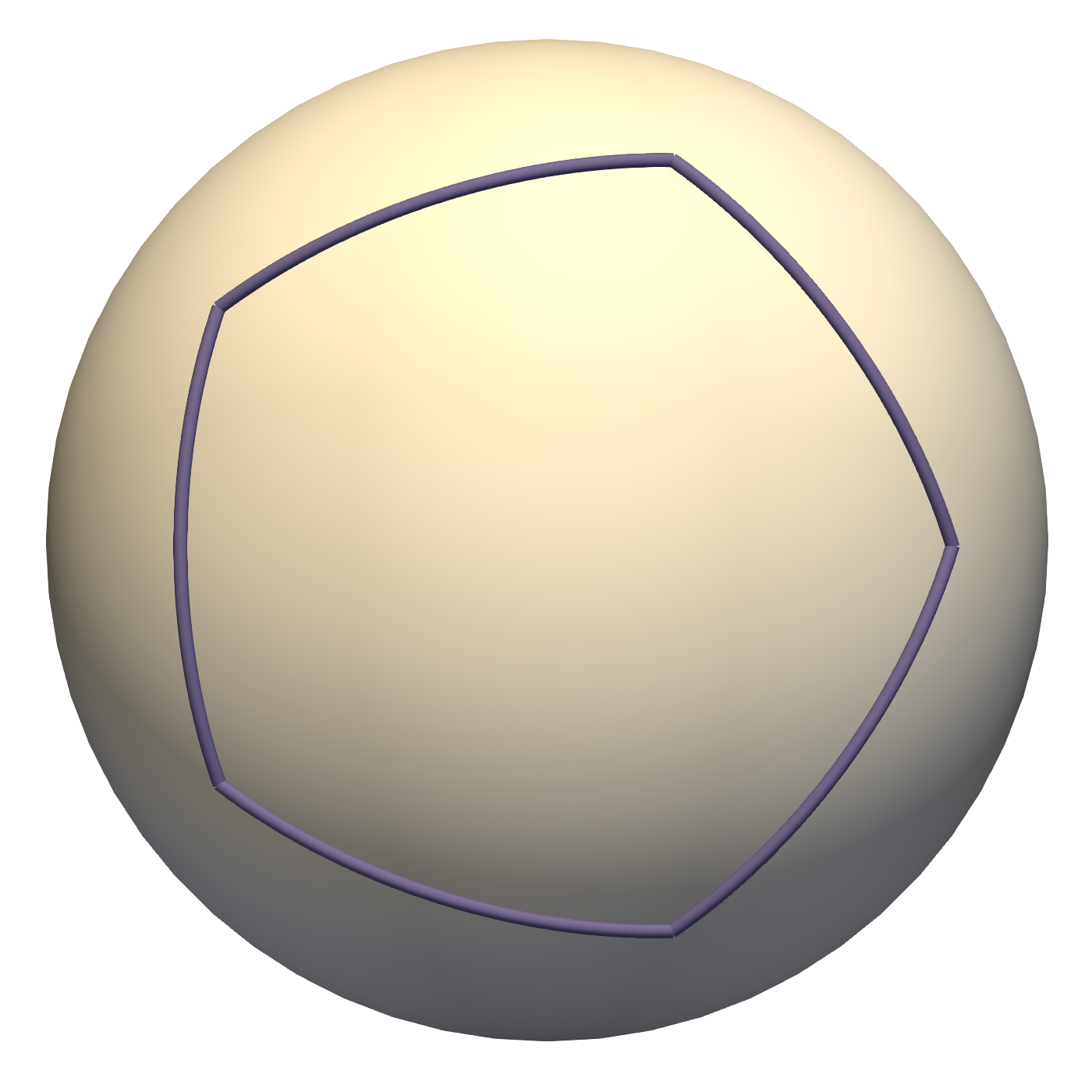}
        \subcaption{$\mu_5=1.0$}
    \end{subfigure}%
    \begin{subfigure}[t]{0.22\textwidth}
        \includegraphics[width=\textwidth]{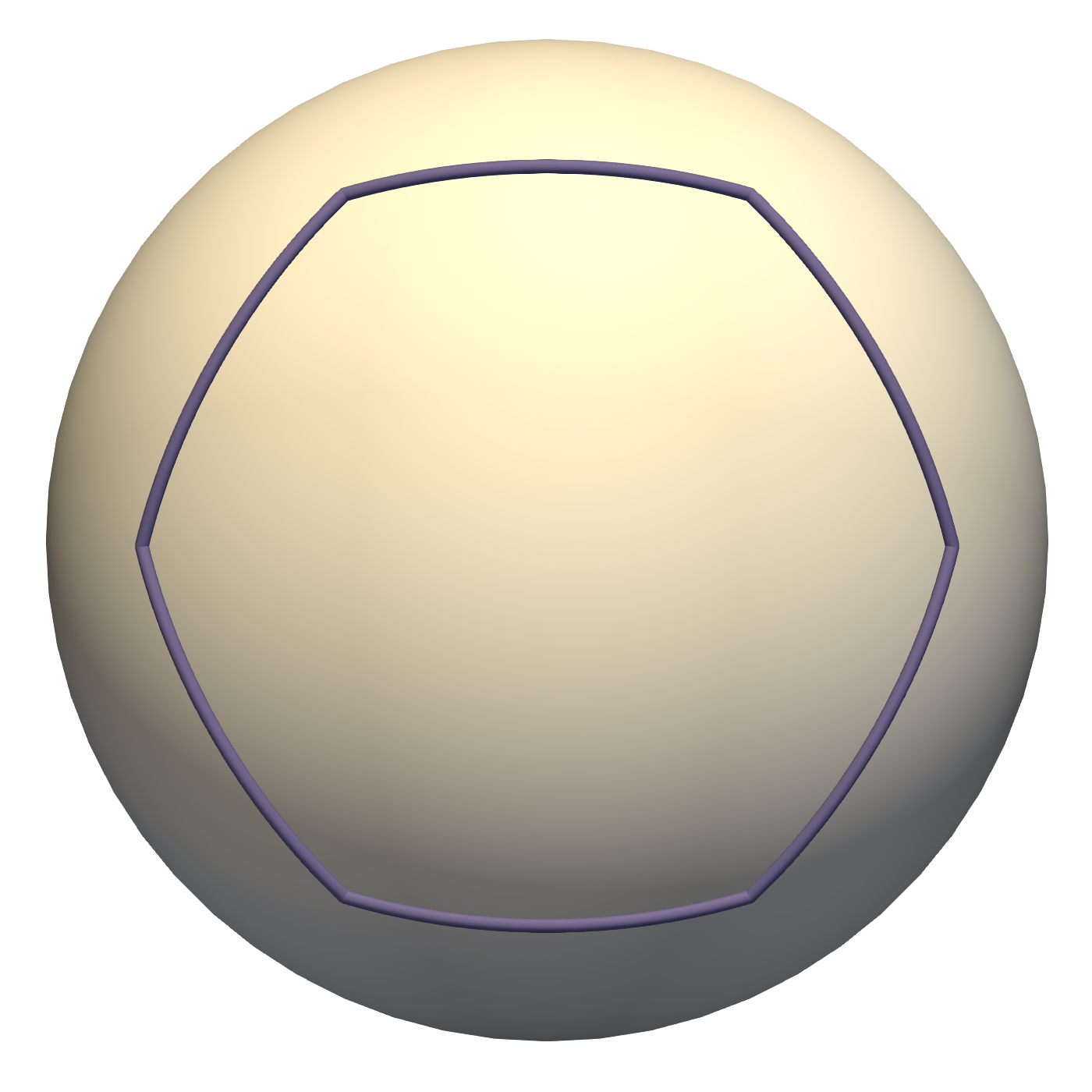}
        \subcaption{$\mu_6=1.0$}
    \end{subfigure}%
    \caption{\label{fig:sketch-RegularSphere} Regular polygons on a sphere. All $\mu_p$ with $p \leq 6$ that are not indicated are equal to zero.}
\end{figure}

The normalized eigenvalues are constant along the curve, i.e., independent of the fiducial point, as described in \Cref{lem:point-independence-eigenvalue}. Another property, shown in \Cref{lem:point-independence-eigenvector}, is that the eigenvectors of the irreducible surface MT are constant in the sense that the turned covariant derivative of the eigenvectors vanishes along the curve. This means that the directions can be transported along the curve using the turned transport $\tPt^\gamma$. This is in contrast to the classical parallel transport $\Pt^\gamma$, which results in a rotated direction after a full circle transport for non-flat surfaces.
In \Cref{fig:ParallelTransport} we illustrate the difference in the transport of the curve co-normal $\conormal$ with the standard parallel transport in (a) and turned transport in (b) along a geodesic polygon on the sphere. It is highlighted that the two transports are different and that the turned transport defines a closed curve, i.e., the transported vector end at the starting vector after a full circle transport, see (b).

\begin{figure}
    \centering
    \begin{subfigure}[t]{0.22\textwidth}
        \includegraphics[width=\textwidth]{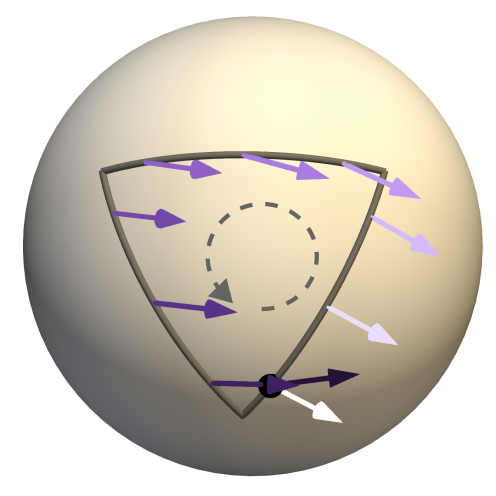}
        \subcaption{\label{fig:ParallelTransport-a}}
    \end{subfigure}%
    \begin{subfigure}[t]{0.22\textwidth}
        \includegraphics[width=\textwidth]{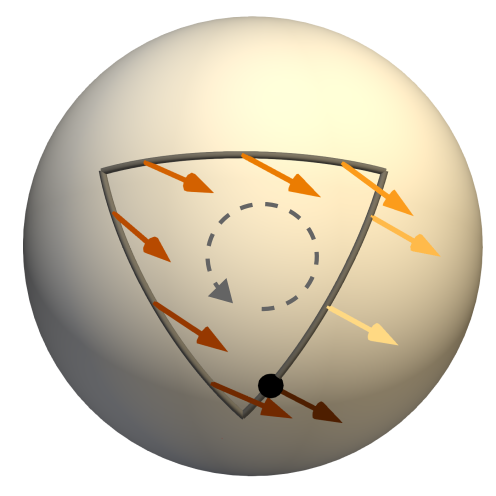}
        \subcaption{\label{fig:ParallelTransport-b}}
    \end{subfigure}%
    \begin{subfigure}[t]{0.22\textwidth}
        \includegraphics[width=\textwidth]{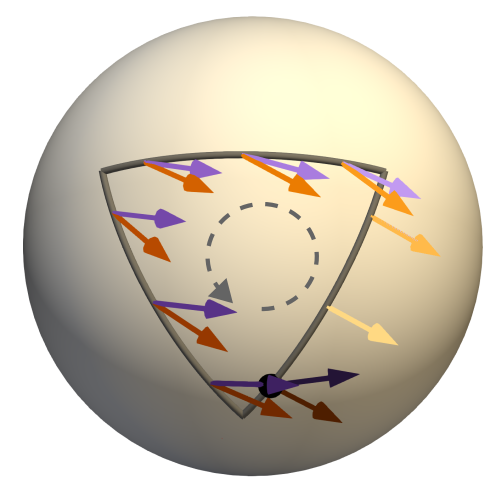}
        \subcaption{\label{fig:ParallelTransport-c}}
    \end{subfigure}%
    \caption{Parallel transport (a) and turned transport with additional rotation (b) of the co-normal vector $\conormal$ starting from the point marked with a black dot in positive direction. Colors indicate the progress in the transport. In (c) both transports are shown at the same time for better comparison.}\label{fig:ParallelTransport}
\end{figure}

\begin{figure}
 \centering
    \begin{subfigure}[t]{0.5\textwidth}
    \setcounter{subsubfigure}{0}%
    \begin{subsubfigure}{0.49\textwidth}
        \includegraphics[height=3.1cm]{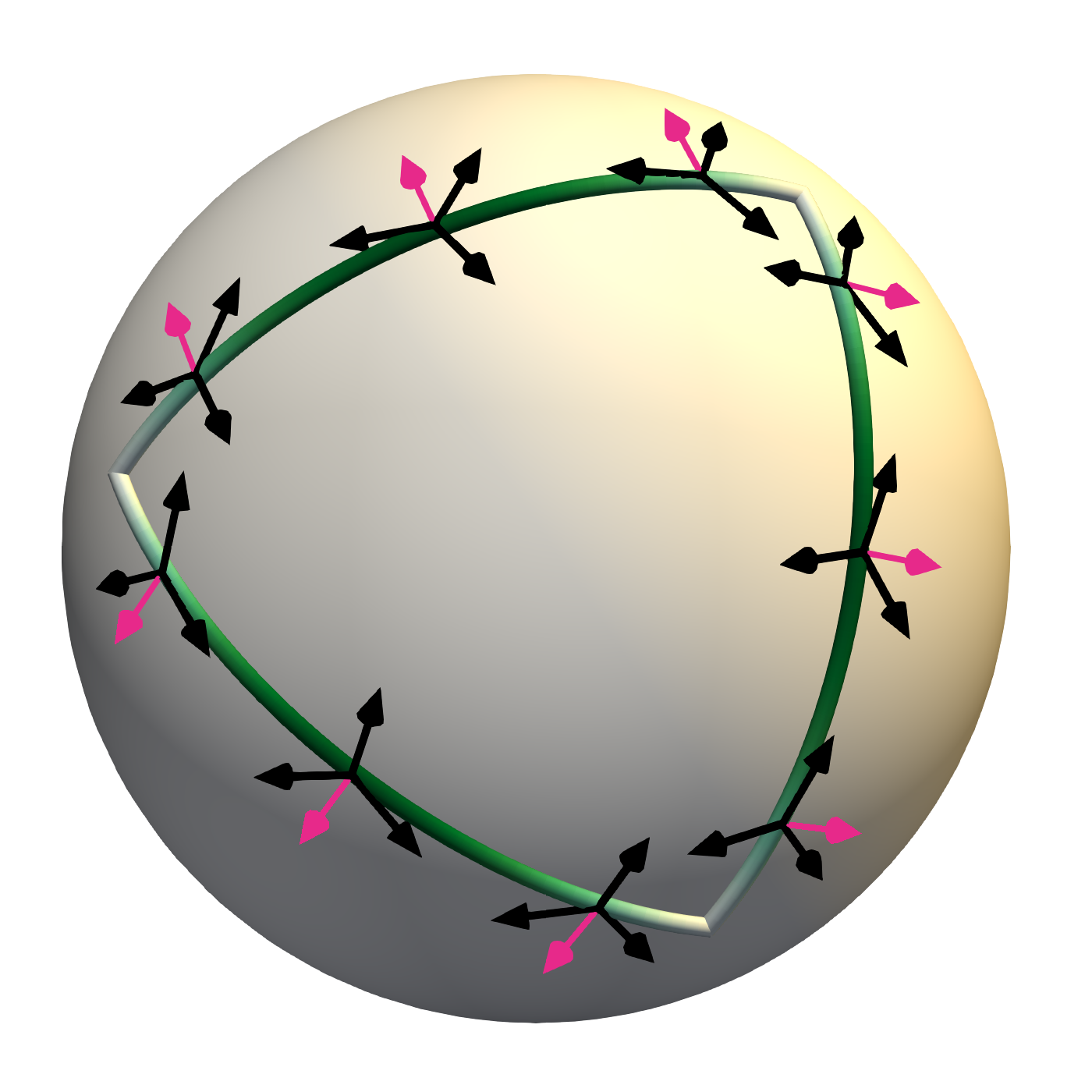}
        \subcaption{$\Pt^\gamma$, $0.23\!<\!\mu_3\!<\!0.34$}
    \end{subsubfigure}%
    \begin{subsubfigure}{0.48\textwidth}
        \includegraphics[height=3.1cm]{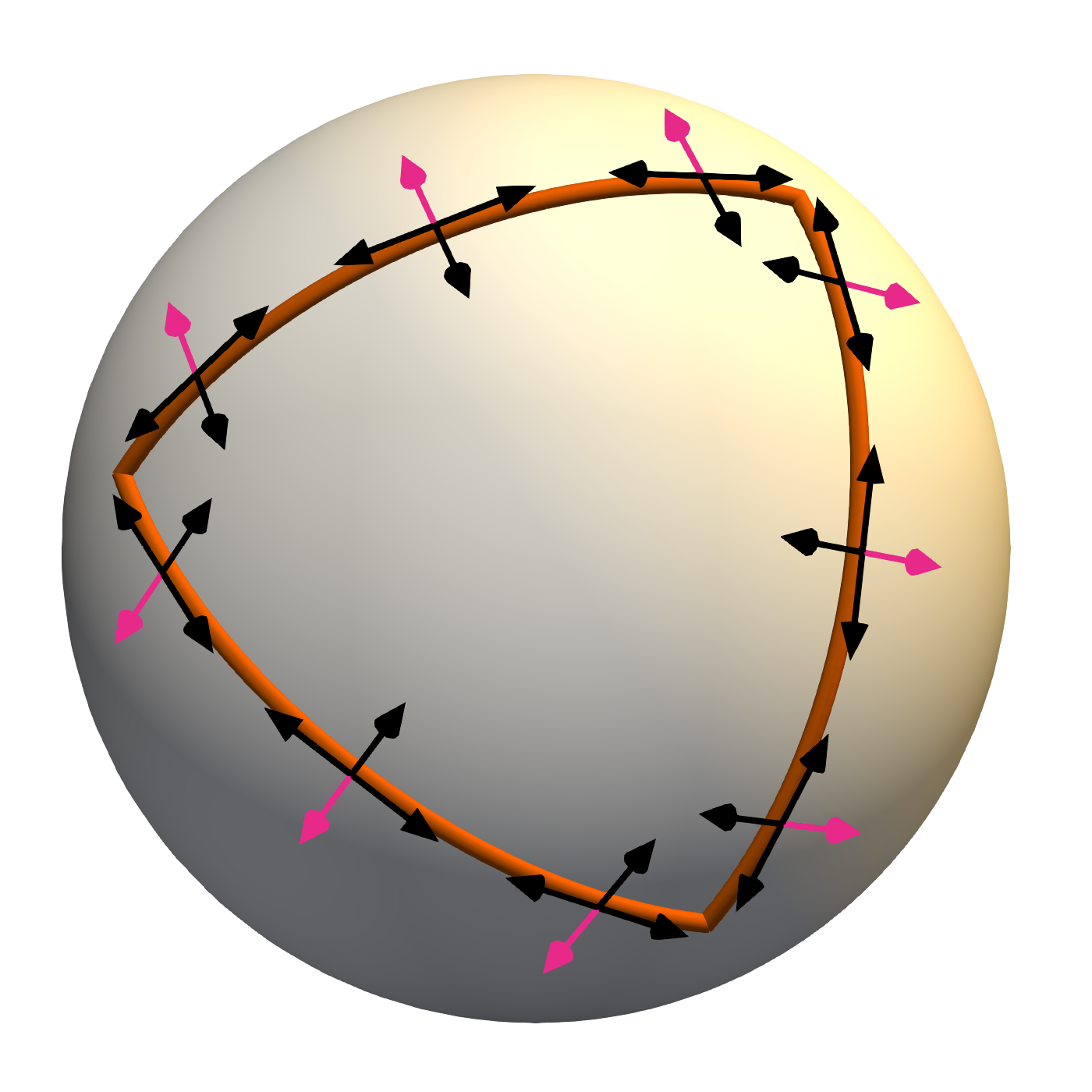}
        \subcaption{$\Pt^\gamma$, $\mu_4=1$}
    \end{subsubfigure}%
    \end{subfigure}%
    \begin{subfigure}[t]{0.5\textwidth}
    \hfill
    \setcounter{subsubfigure}{0}%
    \setcounter{subfigure}{1}%
    \begin{subsubfigure}{0.48\textwidth}
        \includegraphics[height=3.1cm]{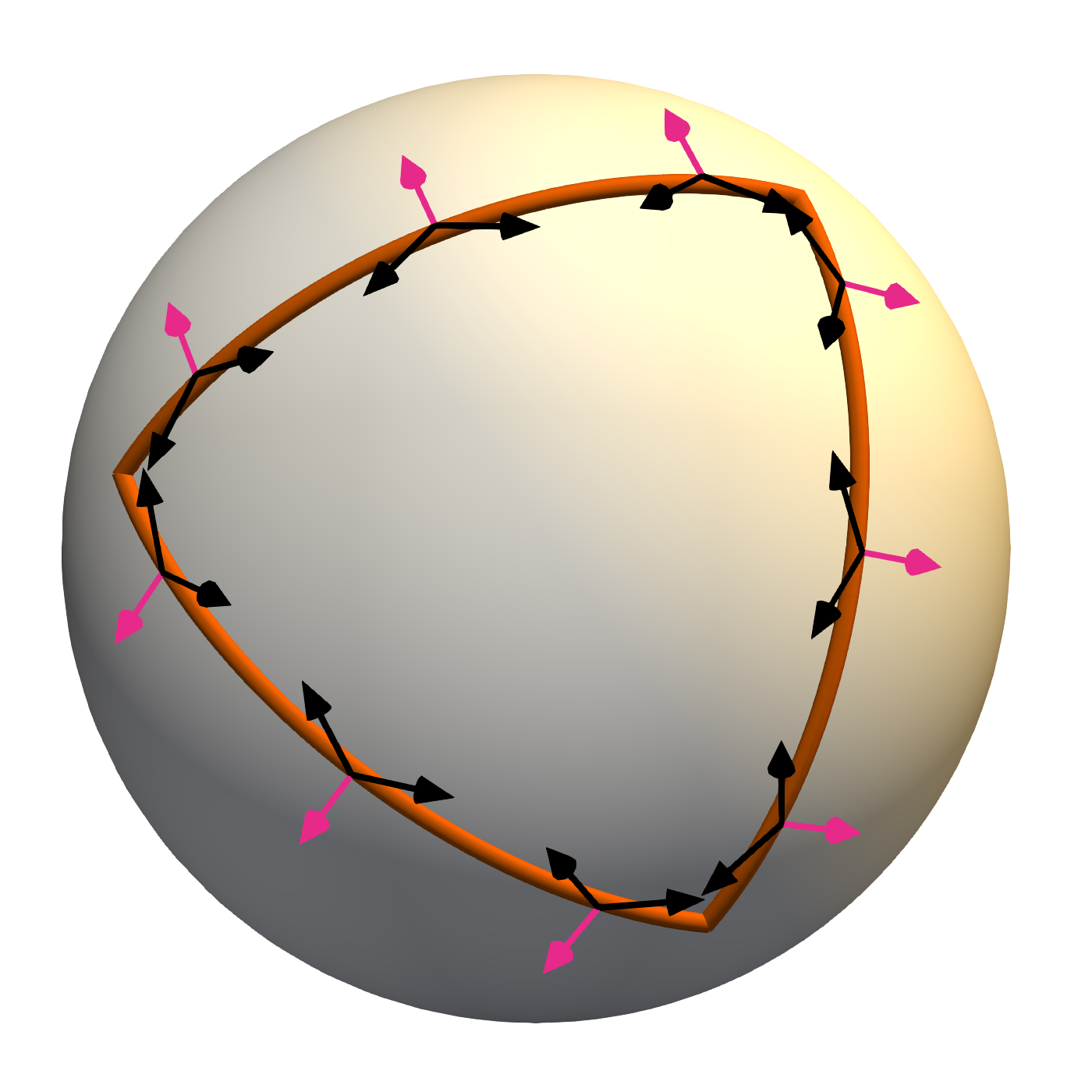}
        \subcaption{$\tPt^\gamma$, $\mu_3=1$}
    \end{subsubfigure}%
    \begin{subsubfigure}{0.48\textwidth}
        \includegraphics[height=3.1cm]{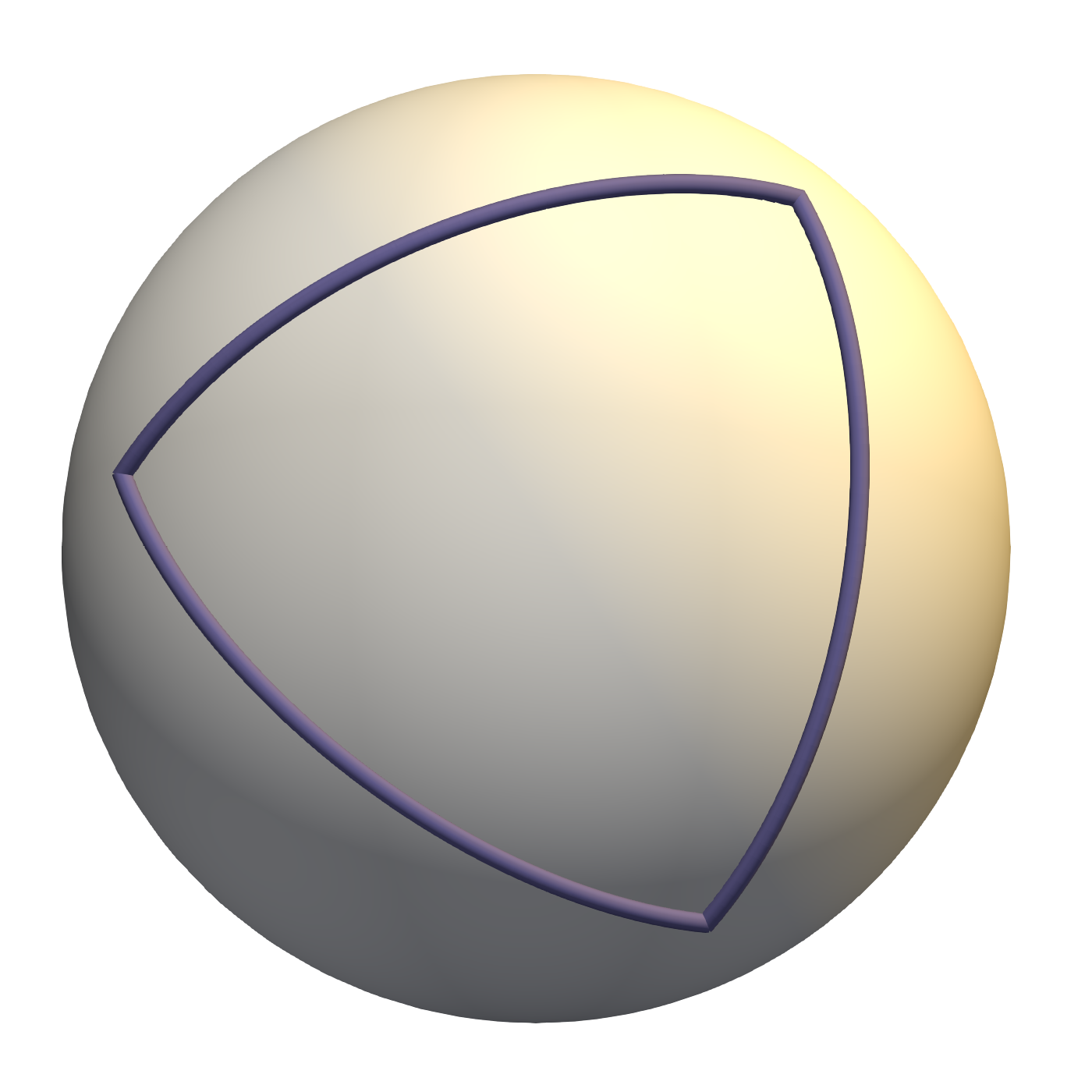}
        \subcaption{$\tPt^\gamma$, $\mu_4=0$}
    \end{subsubfigure}%
    \end{subfigure}
    \caption{\label{fig:minkowski-eigenvectors} Comparison of the ISMTs based on parallel transport (a) and based on the turned transport (b). The triangles are colored by $\mu$ (purple: $\mu=0$, green: $0<\mu<1$ and orange: $\mu=1$), $\conormal$ is always shown in pink. $\{\eb_n^{+}\}$ are shown in black. As there is no preferred direction for $\mu=0$ no arrows are shown in (b.ii).}
\end{figure}

In \Cref{fig:minkowski-eigenvectors} we compare the normalized eigenvalues and eigenvectors of the irreducible surface MT for a regular triangle for $p=3,4$. We expect the normalized eigenvalues $\mu_3=1$ and $\mu_4=0$. This would characterize a triangle. Indeed, in the right figures (b.i) and (b.ii) this can be observed. Opposed to this, we have also computed the normalized eigenvalues when transporting the independent components $g_j$ using the classical parallel transport $\Pt^\gamma$, as proposed in~\cite{Collischon2024Morphometry}. We neither find the $\mu_3 \approx 1.0$ nor the vanishing value $\mu_4$. On the contrary, the normalized eigenvalue $\mu_4=1$ indicates that the shape is more like a quadrilateral. This is, because the spherical triangle is constructed with three 90 degree inner angles, leading to the false shape characterization.
In addition to the normalized eigenvalues, we visualizes the associated eigenvectors along the curve. For $p=3$ with $\mu_3=1$ in (b.i) we can actually compute the corresponding eigenvectors, whereas for $p=4$ with $\mu_4=0$ in (b.ii) there is no well defined direction. Also for the non-turned (classical) parallel transport we show the directions in (a.i) and (a.ii). They deviate from the directions in (b.i). The non-turned transport leads to non-vanishing eigenvalues for $p=4$ and thus also to eigenvectors visualized in (a.ii). This does not represent the shape of a triangle.
We conclude that our definition of the irreducible surface MT on a sphere leads for regular polygons to the same properties as the well-known MT in flat space.

\begin{figure}
\centering
    \includegraphics[width=\textwidth]{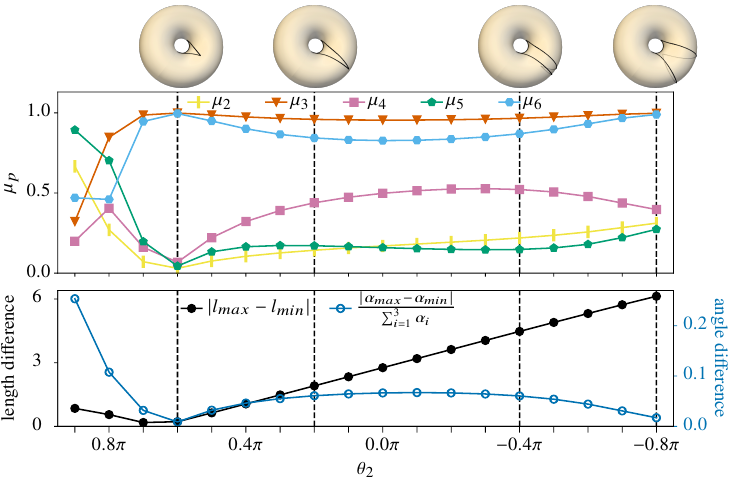}
    \caption{\label{fig:sketch-Torus} Numerical experiments with geodesic triangles on a torus.\ (top) Values of $\mu_p$ for the different triangles.\ (bottom) Difference between the smallest and the largest transport angle of the triangle (scaled with the sum of all angles) in blue, marked with an open circle, and the difference between the longest and the shortest edge of the triangle in black, marked with a filled circle. The dashed lines art for parameter $\theta_2=0.6\pi,0.2\pi,-0.4\pi,-0.8\pi$, from left to right.}
\end{figure}

While regular geodesic polygons exist on the sphere due to its constant curvature, this is not necessarily true on other surfaces. In order to analyze the effect of deviation from a perfect regular polygon on the normalized eigenvalues we next consider geodesic triangles on a torus. The setup is as follows: two vertices of the triangle are placed on the innermost ring of the torus with a geodesic distance of nearly $\pi r_0$, where $r_0=R-r$ is the inner radius, the third vertex is placed in the middle between these first two vertices, but then moved over the ring from a distance of nearly 0 to nearly $2\pi r$, see \Cref{fig:sketch-Torus}. We consider a torus parametrization
\[
  (x,y,z)=\big( (R+r\cos\theta)\cos\phi, (R+r\cos\theta)\sin\phi, r\sin\theta \big),\quad\text{ with }\phi,\theta \in [0,2\pi]
\]
for $r=1.375, R=2$. The vertices are then given in angles: $(\phi_{1},\theta_{1})=(0,\pi)$, $\phi_{2}=1.55$, $\theta_{2} \in [0.9\pi,-0.8\pi]$, and $(\phi_{3},\theta_{3})=(3.1,\pi)$, where the numbering of the vertices indicates the curve orientation.
In \Cref{fig:sketch-Torus} we have visualized the normalized eigenvalues $\mu_p$, $p=2\ldots,6$ for various triangular shapes. In the top plot we observe that $\mu_3$ and $\mu_6$ mostly obtain values greater than $0.8$, whereas $\mu_2,\mu_4$, and $\mu_5$ are much smaller in magnitude. In particular, the eigenvalue $\mu_3\approx 1$ for all angles $\theta_2 < 0.7\pi$. This allows to classify the shape as a triangle. We have two special triangular shapes, visualized along the first and last vertical dashed line, with $\theta_2\in\{0.6\pi, -0.8\pi\}$. In both cases, all angles in the triangle are nearly equal, i.e., the angle difference as visualized in the bottom plot is close to 0. This results in $\mu_3 \approx 1$ and $\mu_6\approx 1$. Only for $\theta_2=0.6\pi$, the first dashed line, also the triangle edges are nearly of equal length. In that case the other $\mu_m\approx 0, m=2,4,5$, whereas in the case $\theta_2=-0.8\pi$, the last dashed line, the edge lengths are quiet different and thus, the $\mu_m > 0, m=2,4,5$. This essentially shows that the properties of a triangle, characterized by the normalized eigenvalues $\mu_p$ depend non-linearly on the surface $\M$.

\subsection{Convergence properties for approximation of the curve and the surface}
In \Cref{sec:approximation} we have studied geodesic and straight line segment approximations of smooth curves on surfaces. In order to investigate the approximation properties and convergence results, we define the parametrization of a curve, called ``flower curve'' in the following, and then distribute points along that curve. The curve is given in terms of a polar coordinate parametrization with radius and two angles, over a parameter interval: $(r_0,a,\omega)$ with center $(x_0,y_0)=(\pi/2,\pi/4)$, for $t\in\interval{0}{2\pi}$
\begin{align}
    r(t) &\colonequals r_0 - a\sin(\omega\,t), \label{eq:flower-curve-radius} \\
    \theta(t) &\colonequals x_0 + r(t)\cos(t), \\
    \varphi(t) &\colonequals y_0 + r(t)\sin(t).
    \label{eq:flower-curve-phi}
\end{align}
The curve parametrization has to be understood in the parameter domain of a surface. We consider an ellipsoid parametrization with three major axes $a_1,a_2,a_3$, and denote the surface by $E(a_0,a_1,a_2)$. The curve is then given by
\begin{align}
    \gamma(t) &\colonequals \big(a_1\sin\theta(t)\cos\varphi(t), a_2\sin\theta(t)\sin\varphi(t), a_3\cos\theta(t)\big), & \text{for }t\in\interval{0}{2\pi}.
\end{align}

We consider the ellipsoid surface $E(1.6,1.3,1)$ with an approximation of a flower curve with $\omega=3$ by (a) a geodesic line approximation and (b) a straight line approximation. The smooth curve is thereby subdivided into $q$ equal length segments where the endpoints define the corresponding polygonal approximations. Note that the curve and the resulting polygons are not convex and thus the turning angles need to be computed with a sign in relation to the surface orientation.

\begin{table}
\centering
\begin{tabular}{rccccccccc}
  \toprule
  $q$ & $|L-L_q|$ & {(eoc)} & $|\kappa-\kappa_q|$ & (eoc) & $|f-f_q|$ & (eoc) & $\norm{\overline{\gb}_p-\overline{\gb}_{p,q}}$ & (eoc) \\\midrule
  4 & 8.6e-01 & --- & 3.9e-01 & --- & 2.4e+00 & --- & 3.1e-02 & --- \\
  16 & 9.0e-02 & 2.15 & 3.5e-02 & 1.87 & 1.2e+00 & 0.60 & 9.0e-02 & 1.16 \\
  64 & 7.1e-03 & 1.99 & 2.2e-03 & 1.99 & 3.4e-01 & 0.74 & 9.3e-03 & 1.41 \\
  256 & 4.5e-04 & 2.00 & 1.4e-04 & 2.00 & 8.8e-02 & 1.00 & 2.0e-03 & 1.07 \\
  1024 & 2.8e-05 & 2.00 & 8.5e-06 & 2.00 & 2.2e-02 & 1.00 & 4.8e-04 & 1.02 \\
\bottomrule
\end{tabular}
\caption{Convergence data for the geodesic approximation of a flower curve with $(r_0,a,\omega)=(0.7,0.2,3)$ and irreducible surface MT of rank $p=3$ on an ellipsoid surface $E(1.6,1.3,1.0)$.}\label{tab:geodesic_approx_data}
\end{table}

In \Cref{tab:geodesic_approx_data} and \Cref{tab:line_approx_data} we have listed the geometric quantities on several refinement levels of the approximation with increasing number of segments $q$ and how they converge. At first, the approximation of the total length $L$ by $L_q$ is considered. As stated in \Cref{rem:length_consistency}, it converges with order 2. The total geodesic curvature $\kappa=\kappa(\gamma)=\int_\gamma k_g(s)\ds$ and its approximation $\kappa_q=\kappa(\gamma_q)$ also shows second order convergence, see \Cref{lem:geodesic_approx}. The last two quantities, $f$ and $\overline{\gb}_{p}$, are directly related to the irreducible surface MT and allow to extract the normalized eigenvalues $\mu_p$ and eigenvectors $\eb_{p,n}$. These quantities converge with order 1, as expected by \Cref{cor:f-fh,cor:g-gh}. Note that the independent components $\overline{\gb}_{p}$ and $\overline{\gb}_{p,q}$ are always normalized by the curve length $L$ and $L_q$, respectively.

\begin{table}
\centering
\begin{tabular}{rccccccc}
  \toprule
  $q$ & $|L-L_q|$ & {(eoc)} & $|\kappa-\kappa_q|$ & (eoc) & $\norm{\overline{\gb}_p-\overline{\gb}_{p,q}}$ & (eoc) \\\midrule
  4 & 1.3e+00 & --- & 6.3e-01 & --- & 2.0e-02 & --- \\
  16 & 1.4e-01 & 1.97 & 3.5e-02 & 2.14 & 3.3e-02 & 2.56 \\
  64 & 1.2e-02 & 1.96 & 1.9e-03 & 2.01 & 2.8e-03 & 1.68 \\
  256 & 7.4e-04 & 2.00 & 1.2e-04 & 2.00 & 3.7e-04 & 1.29 \\
  1024 & 4.6e-05 & 2.00 & 7.3e-06 & 2.00 & 8.3e-05 & 1.04 \\
\bottomrule
\end{tabular}
\caption{Convergence data for the line approximation of a flower curve with $(r_0,a,\omega)=(0.7,0.2,3)$ and irreducible surface MT of rank $p=3$ on an ellipsoid surface $E(1.6,1.3,1.0)$.}\label{tab:line_approx_data}
\end{table}

Finally, in \Cref{fig:triangulated-sphere} also the surface is approximated, using a triangulation of the sphere with all vertices orthogonally projected to the surface. The figure shows three refinement levels of subdivision of the triangles into four sub-triangles in each refinement step. The curve on the triangulated surface is given as the zero-levelset of the function $\rho$
\begin{align*}
  \rho(\xb) &\colonequals \sqrt{{\theta(\xb)}^2 + {\varphi(\xb)}^2} - r(t(\xb)), \\
  t(\xb) &\colonequals \operatorname{atan2}(\varphi(\xb),\theta(\xb)) + \pi, \\
  \theta(\xb) &\colonequals \operatorname{acos}(x_3/\norm{\xb}) - \pi/2, \\
  \varphi(\xb) &\colonequals \operatorname{atan2}(x_2,x_1) - \pi/4,
\end{align*}
with the point $\xb=(x_1,x_2,x_3)\in\M_h$ in the triangulation. This represents a flower curve by reconstructing the spherical coordinates from a point in its neighborhood and then using the radius definition from~\eqref{eq:flower-curve-radius}. As an approximation of the curve, we use the line segments obtained by intersecting the zero-levelset of a piecewise linear interpolation of $\rho$ on $\M_h$ with the triangulation edges. Note that this results in a polygonal chain with all segments of different length. The triangulation and extraction of the zero-levelset is implemented in Dune~\cite{BastianEtAl2021Dune}, using Dune-FoamGrid~\cite{dune-foamgrid} for the grid representation and TPMC~\cite{Engwer2017TPMC} for the interface extraction.

\begin{figure}
    \centering
    \begin{subfigure}[c]{0.3\textwidth}
        \includegraphics[width=0.95\textwidth]{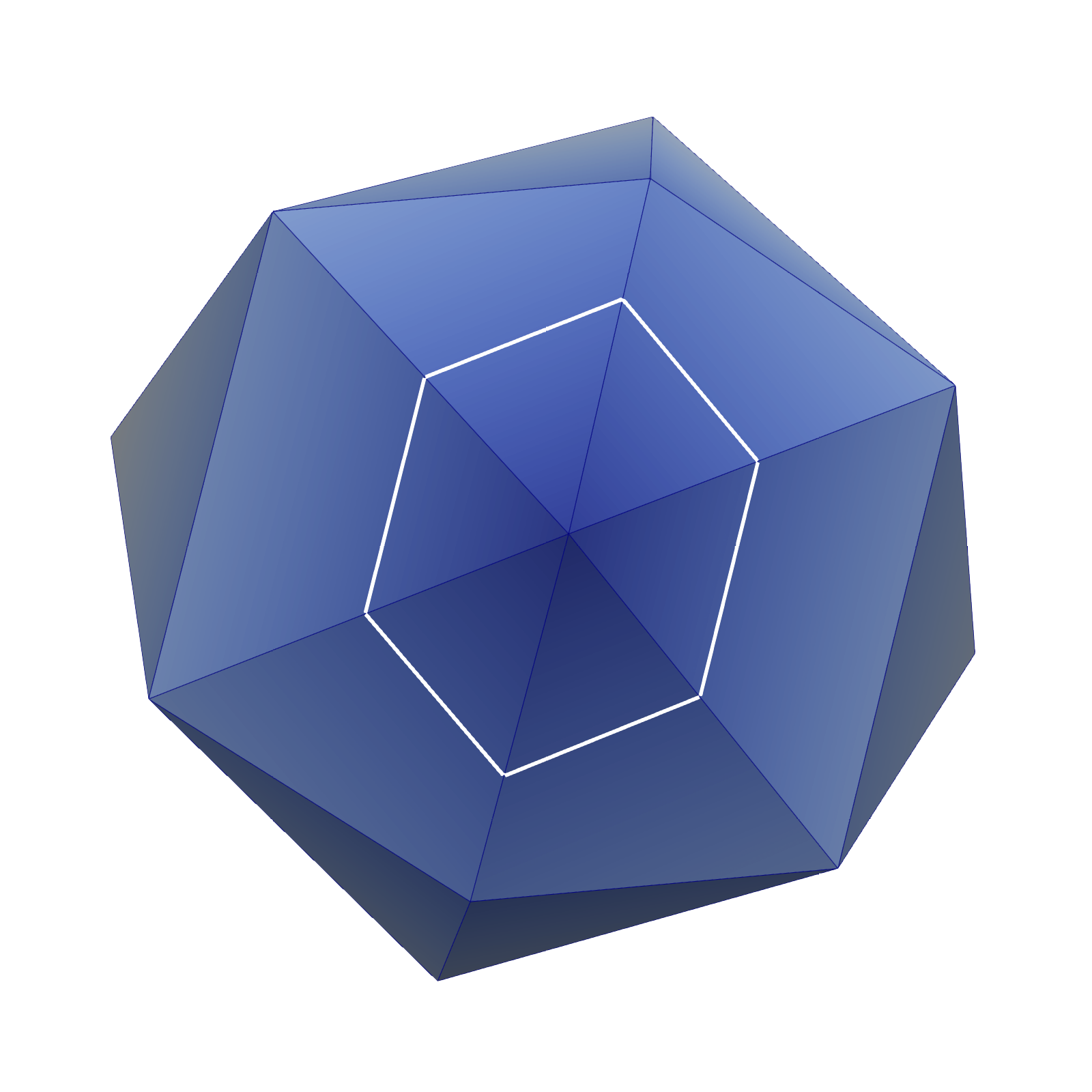}
        \subcaption{Level 0, $h=1$.}
    \end{subfigure}%
    \begin{subfigure}[c]{0.3\textwidth}
        \includegraphics[width=0.95\textwidth]{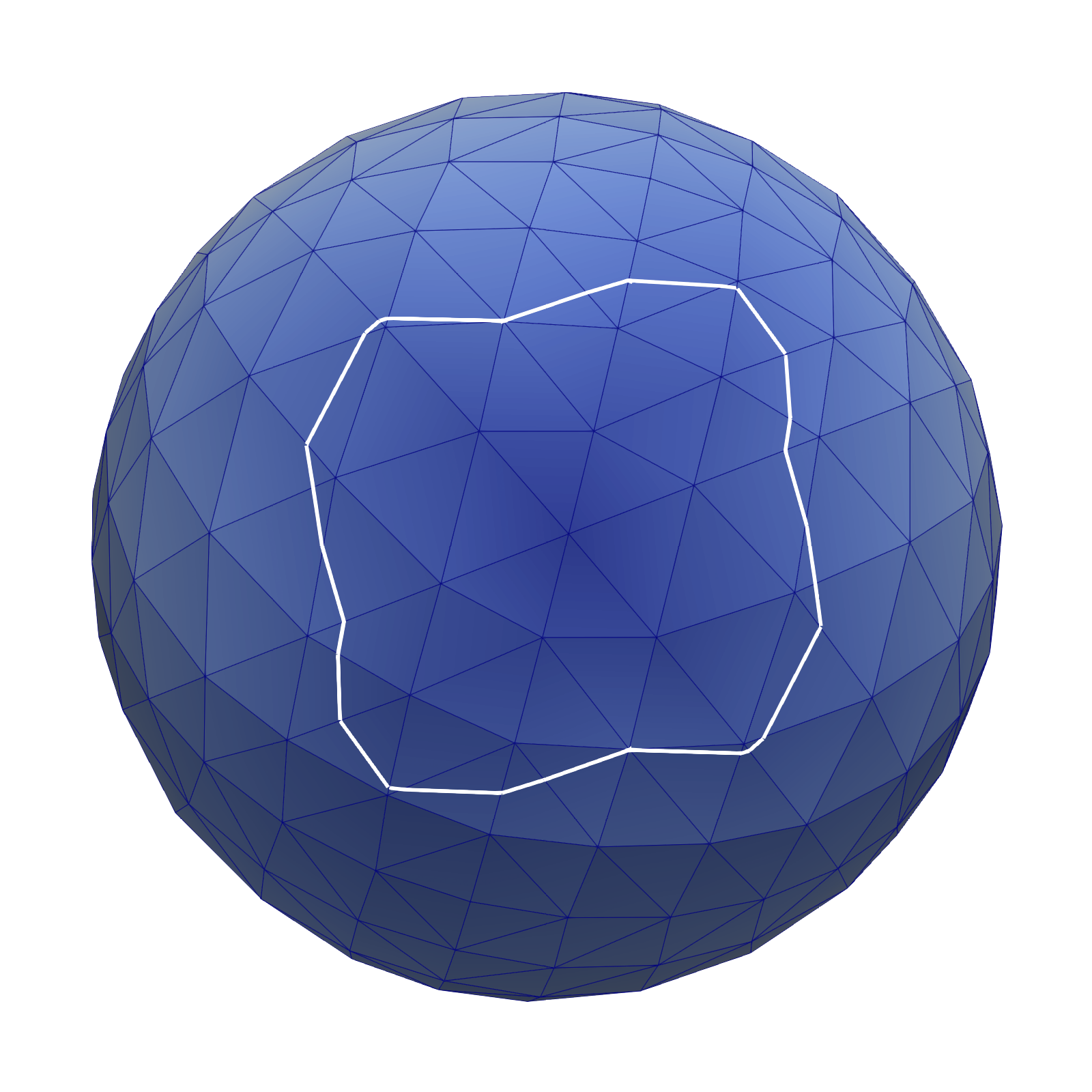}
        \subcaption{Level 2, $h=0.25$.}
    \end{subfigure}%
    \begin{subfigure}[c]{0.3\textwidth}
        \includegraphics[width=0.95\textwidth]{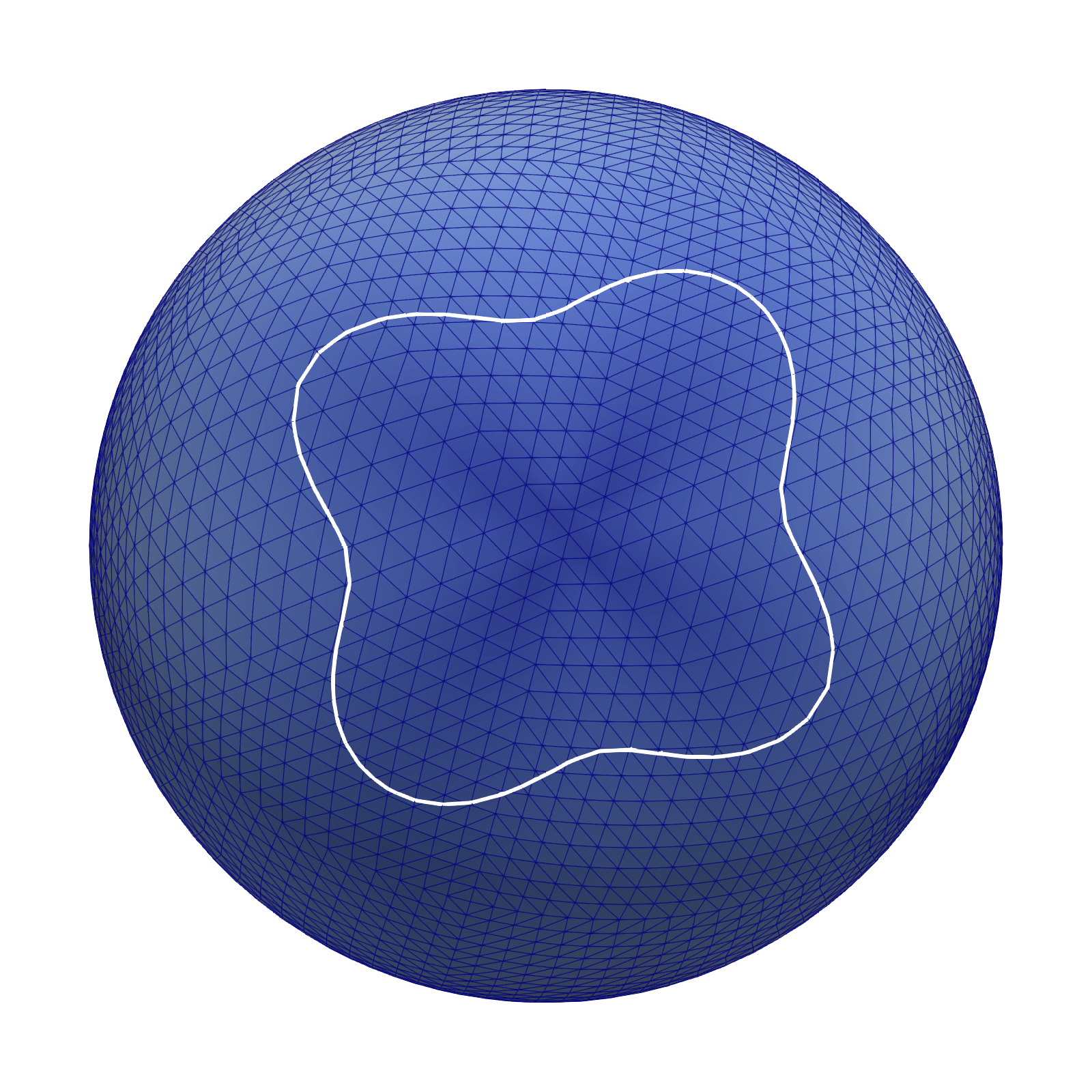}
        \subcaption{Level 4, $h=0.0625$.}
    \end{subfigure}%
    \caption{\label{fig:triangulated-sphere} Flower curve ($r_0=0.5, a=0.1, \omega=4$) projected and interpolated onto a spherical surface with decreasing grid size $h$. The line segments are obtained from intersection of the zero-levelset with the grid edges.}
\end{figure}

\begin{figure}
    \centering
    \includegraphics{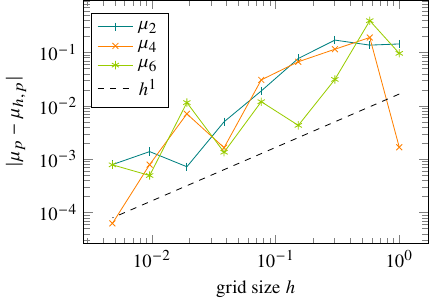}
    \caption{\label{fig:sphere-convergence}Errors in the computed normalized eigenvalues $\mu_{h,p}$ computed on the triangulated sphere $\M_h$ and compared against the values $\mu_p$ on the exact sphere $\M$. The dashed line indicates a linear convergence. The grid size $h$ refers to the maximal edge length in the sphere triangulation.}
\end{figure}

In \Cref{fig:sphere-convergence} the convergence of the normalized eigenvalues $\mu_{h,p}$, $h\to 0$ is studied. We have listed the values $\mu_{h,2},\mu_{h,4}$, and $\mu_{h,6}$ and their errors when compared against the exact surface and curve representation for various refinement levels. While we see that all $\mu_{h,p}$ seem to converge against $\mu_p$, the convergence graph is not a monotone function. This might be due to the fact that in each refinement step, the curve on the triangulated sphere must be constructed again by cutting the zero-levelset of the implicit function with the edges of the grid, resulting in a very non-homogeneous line segment distribution. Still, we can see an average linear convergence behavior in \Cref{fig:sphere-convergence}, as stated in \Cref{cor:linear-convergence-line-segment-approx}. The surface normal $\nb_h$ is extracted from the triangulation elements, averaged on the vertices and then assigned to the edge cut points.

\begin{figure}
    \begin{subfigure}[b]{0.475\textwidth}
            \centering
            \includegraphics[width=\linewidth]{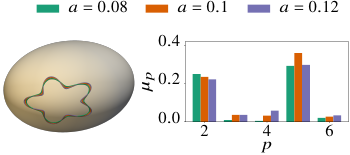}
            \subcaption{Modulation of the amplitude $a$, with fixed \mbox{$r_0=0.4$}, $\omega=5$, $y_0=\frac14\pi$}
    \end{subfigure}
    \hfill
    \begin{subfigure}[b]{0.475\textwidth}
            \centering
            \includegraphics[width=\linewidth]{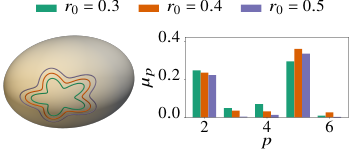}
            \subcaption{Modulation of the mean radius $r_0$, with fixed \mbox{$a=0.1$}, $\omega=5$, $y_0=\frac14\pi$}
    \end{subfigure}
    \vskip\baselineskip%
    \begin{subfigure}[b]{0.475\textwidth}
            \centering
            \includegraphics[width=\linewidth]{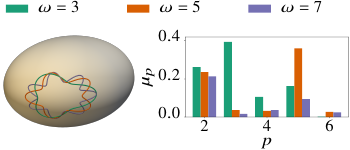}
            \subcaption{Modulation of the frequency $\omega$, with fixed \mbox{$r_0=0.4$}, $a=0.1$,$y_0=\frac14\pi$}
    \end{subfigure}
    \hfill
    \begin{subfigure}[b]{0.475\textwidth}
            \centering
            \includegraphics[width=\linewidth]{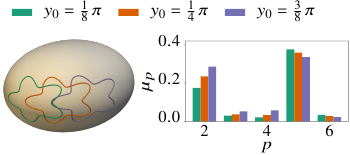}
            \subcaption{Modulation of the position $y_0$, with fixed \mbox{$r_0=0.4$}, $a=0.1$, $\omega=5$}
    \end{subfigure}
    \caption{\label{fig:sketch-Ellipsoid} Numerical experiments of varying parameters of the flower curve on an ellipsoid $E(1.6,1.3,1.0)$ and the normalized eigenvalues $\mu_p$ for $p=2\ldots10$.}
\end{figure}
\subsection{Applications}
Before we come to an application in biology in which the
characterization of cells in a curved monolayer of cells is considered, let's review current findings for flat monolayers of cells. In~\cite{happel2025quantifying} MT have been applied to segmented cells in Madin-Darby canine kidney (MDCK) monolayers and computational approaches for active vertex models and multiphase field models to model such tissues. These cells are mostly irregular and one essential finding is that $\mu_p$ captures fundamentally distinct aspects of such cell shapes being independent of each other for different $p$. Transforming this to cells in curved monolayers of cells requires to not focus on one specific $p$ but to compute $\mu_p$ for various $p$ for each cell. We first test this by varying parameters of the flower curve defined in eqs.~\eqref{eq:flower-curve-radius}--\eqref{eq:flower-curve-phi}. The various curves and the corresponding values for $\mu_p$ for $p = 2,3,4,5,6$ are shown in \Cref{fig:sketch-Ellipsoid}. While mostly $\mu_2$ and $\mu_5$ obtain the highest values in the considered parameter regime, several values of $\mu_p$ strongly vary, leading, e.g., to larger values for $\mu_3$ for $r_0 = 0.4$, $a = 0.1$, $y_0 = \frac{1}{4} \pi$ and $\omega = 3$, to lower values for $\mu_5$ for $r_0 = 0.4$, $a = 0.1$, $y_0 = \frac{1}{4} \pi$ and $\omega = 3$ and $\omega = 7$ in (c), or to decreased values of $\mu_2$ in (d) if the curve is placed towards regions of larger curvature of the underlying surface $\mathcal{M}$ by moving $y_0 = \frac{3}{8} \pi$ to $y_0 = \frac{1}{4} \pi$ and $y_0 = \frac{1}{8} \pi$. These results suggest that $p$ should not be restricted to a specific number apriori, and characterization of rotational symmetries of shapes embedded in curved surfaces presumably requires to consider all $p$.

We follow this and analyze data from Arapidospis thaliana flower buds from~\cite{Li2019-ak} which has been used to demonstrate the applicability of the \href{https://nikolas-claussen.github.io/blender-tissue-cartography/}{blender-tissue-cartography} tool introduced in~\cite{Claussen2025_Blender} and provided by Nikolas Claussen with the permission to use the data in this publication. The tool extracts and cartographically projects surfaces from volumetric biological image data. In \Cref{fig:sketch-blender} a 3D rendering of the curved monolayer of cells (Fig. 3 $B^{\prime\prime}$ in~\cite{Claussen2025_Blender}) is shown together with segmented cells and three selected ones, which are analyzed using irreducible surface MT.\@ The cell contours (green) together with the surface normals (purple) are shown and the values for $\mu_p$ are provided for $p=2,3,4,5,6$, showing again a strong variability of the values underlining that no single $p$ can capture the full information. To get rid of pixel shaped artifacts the cell contours where smoothened by replacing every coordinate with the mean of three coordinates.  In addition we also compute the eigenvectors that can give an indicator for a preferred direction. The direction fields are plotted in the local tangent plane of $\mathcal{M}$ and are shown for $p=2,3,4,5,6$. The coordinate system is rotated for each cell to highlight the change in the surface normal and the direction fields, respectively. We here only demonstrates the ability to extract the information from microscopy data and to compute irreducible surface MT for such data. How this information can be linked to cellular behaviors and tissue organization is not topic of this article. We therefore only sketch open problems emerging by proceeding in this direction. In contrast to flat monolayers of cells, where the computed eigenvectors would be constant for each cell and can be used to obtain a coarse-grained $p$-atic field that characterizes the orientational order at the tissue scale and can be related to $p$-atic liquid crystal theories and thus mechanical properties~\cite{Giomi_PRE_2022}, such approaches are not available for curved monolayers of cells.

\begin{figure}[h!]
    \begin{subfigure}[b]{0.27\textwidth}
            \centering
            \includegraphics[width=\linewidth]{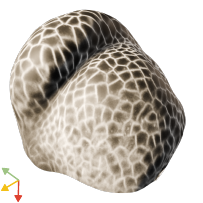}
            \subcaption{}
    \end{subfigure}
    \hfill
    \begin{subfigure}[b]{0.29\textwidth}
            \centering
            \includegraphics[width=\linewidth]{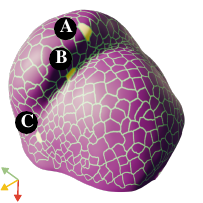}
            \subcaption{}
    \end{subfigure}
    \hfill
    \begin{subfigure}[b]{0.386\textwidth}
            \includegraphics[width=\linewidth]{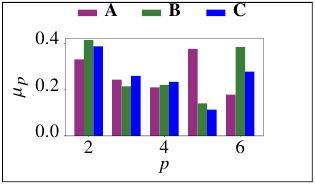}
            \subcaption{}
    \end{subfigure}
     \vskip\baselineskip%
    \begin{subfigure}[c]{0.93\textwidth}
            \centering
            \includegraphics[width=\linewidth]{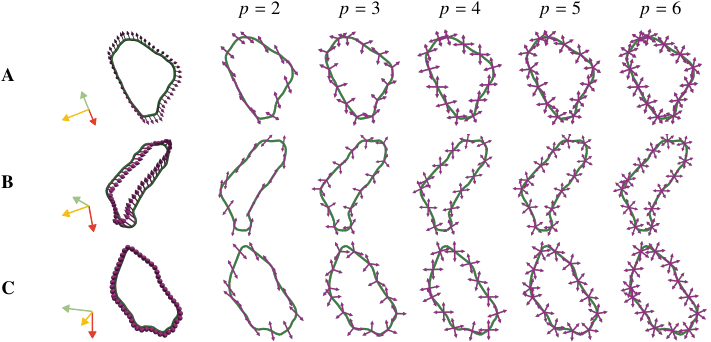}
            \subcaption{}
    \end{subfigure}
    \caption{\label{fig:sketch-blender} $\mu_p$ and $\vartheta^-_p$ for experimental data. $\mathbf{(a)}$ shows the experimental image. In $\mathbf{(b)}$ the segmentation of $\mathbf{(a)}$ is shown and the cells used in the following pictures are marked. $\mathbf{(c)}$ $\mu_p$ for the three chosen cells is shown. In $\mathbf{(d)}$ we show the outline of the cells in green and the direction of the surface normal and $\vartheta^-_p$ in purple (from left to right). As $\vartheta^-_p$ is in the tangent space of the respective point the arrows appear more flat there. In $(\mathbf{a})$, $(\mathbf{b})$ and $(\mathbf{d})$ the $X$ (red), $Y$ (yellow) and $Z$ (green) -axis are shown.}
\end{figure}

\section*{Acknowledgments}
We thank Nikolas Claussen for providing the raw data used in \Cref{fig:sketch-blender}.  Furthermore we thank him for providing \href{https://nikolas-claussen.github.io/blender-tissue-cartography/}{blender-tissue-cartography} (a tissue cartography add-on for Blender with an accompanying Python library) and explaining the usage of said library to us.

\section*{Funding}
This work was supported by the Deutsche Forschungsgemeinschaft (DFG, German Research Foundation) through the research unit FOR~3013, ``Vector- and Tensor-Valued Surface PDEs'' (project number 417223351), within the projects TP05, ``Ordering and defects on deformable surfaces'', and TP06, ``Symmetry, length, and tangential constraints''.

\section*{Authors and affiliations}
\begin{description}
\item[Lea Happel \orcid{0000-0002-4525-9185}]
  Institute of Scientific Computing, Technische Universität Dresden, 01062 Dresden, Germany;\\
  \href{mailto:lea.happel@tu-dresden.de}{lea.happel@tu-dresden.de}

\item[Hanne Hardering \orcid{0009-0002-8173-7807}]
  Institute of Numerical Mathematics, Technische Universität Dresden, 01062 Dresden, Germany;\\
  \href{mailto:hanne.hardering@tu-dresden.de}{hanne.hardering@tu-dresden.de}

\item[Simon Praetorius \orcid{0000-0002-1372-4708}]
  Institute of Scientific Computing, Technische Universität Dresden, 01062 Dresden, Germany;\\
  \href{mailto:simon.praetorius@tu-dresden.de}{simon.praetorius@tu-dresden.de}

  \item[Axel Voigt \orcid{0000-0003-2564-3697}]
  Institute of Scientific Computing, Technische Universität Dresden, 01062 Dresden, Germany;
  Cluster of Excellence Physics of Life (PoL), Technische Universität Dresden, 01062 Dresden, Germany;
  Center for Systems Biology Dresden (CSBD), Pfotenhauerstraße 108, 01307 Dresden, Germany;\\
  \href{mailto:axel.voigt@tu-dresden.de}{axel.voigt@tu-dresden.de}

\end{description}

\bibliographystyle{abbrv}
\bibliography{references}

\end{document}